\numberwithin{theorem}{section}
\numberwithin{equation}{section}
\def\vol{\rm{vol}}
\renewcommand{\paragraph}[1]{\subsubsection{#1}}
\renewcommand{\cases}[1]{\left\{ \begin{array}{rl} #1 \end{array} \right.}
\newcommand{\smfrac}[2]{{\textstyle \frac{#1}{#2}}}
\def\XXint#1#2#3{{\setbox0=\hbox{$#1{#2#3}{\int}$ }
\vcenter{\hbox{$#2#3$ }}\kern-.6\wd0}}
\def\b{\big}
\def\B{\Big}
\def\bg{\bigg}
\def\sep{\,|\,}
\def\bsep{\,\b|\,}
\def\conv{{\rm conv}}
\def\supp{{\rm supp}}
\def\R{\mathbb{R}}
\def\N{\mathbb{N}}
\def\Z{\mathbb{Z}}
\def\WW{W}
\def\CC{C}
\def\LL{L}
\def\WWh{\dot{W}}
\def\WWhz{\dot{W}_0}
\def\dx{\,{\rm d}x}
\def\dy{\,{\rm d}y}
\def\dz{\,{\rm d}z}
\def\dt{\,{\rm d}t}
\def\ds{\,{\rm d}s}
\def\dd{{\rm d}}
\def\pp{\partial}
\def\<{\langle}
\def\>{\rangle}
\def\ol{\overline}
\def\mA{{\sf A}}
\def\mF{{\sf F}}
\def\mG{{\sf G}}
\def\mS{{\sf S}}
\def\mO{{\sf 0}}
\def\bfg{{\bm g}}
\def\bfrho{{\bm \rho}}
\def\bfv{{\bm v}}
\def\bfh{{\bm h}}
\def\bfO{{\bm 0}}
\def\bbC{\mathbb{C}}
\newcommand{\Da}[1]{D_{\!#1}}
\newcommand{\Dc}[1]{\D_{#1}}
\def\D{\nabla}
\def\del{\delta}
\def\ddel{\delta^2}
\def\loc{{\rm loc}}
\def\eps{\varepsilon}
\def\a{{\rm a}}
\def\c{{\rm c}}
\def\L{\Lambda}
\def\Rg{\Lambda_*}
\def\vsig{\varsigma}
\def\Us{\mathscr{W}}
\def\Usz{\Us_0}
\def\Ush{\dot{\mathscr{W}}}
\def\E{E}
\def\Ea{\E^\a}
\def\Ec{\E^\c}
\def\Es{\Phi}
\def\Om{{\R^d}}
\def\tilu{\tilde u}
\def\tilv{\tilde v}
\def\tilw{\tilde w}
\def\barv{v}
\def\barw{w}
\def\baru{u}
\def\bare{e}
\def\barf{f}
\def\Sa{\mS^\a}
\def\Scb{\mS^\c}
\def\zz{\zeta}
\def\ww{\chi}
\def\DV{\mathscr{D}}
\def\Lam{\gamma}
\def\Ba{\mathscr{K}}
\def\Bc{K}
\def\inv{\kappa}
\def\Itil{I}
\definecolor{cocol}{rgb}{0.7, 0, 0}
\definecolor{ftcol}{rgb}{0, 0, 0.9}
\begin{document}

\title{Nonlinear Elasticity from Atomistic Mechanics}

\author{C. Ortner}
\address{C. Ortner\\ Mathematics Institute \\ Zeeman Building \\
  University of Warwick \\ Coventry CV4 7AL \\ UK}
\email{christoph.ortner@warwick.ac.uk}

% \author{C. Makridakis}
% \address{C. Makridakis\\ Department of Applied Mathematics\\
%   University of Crete \\ L. Knosou \\ GR 71409 Heraklion-Crete}
% \email{makr@tem.uoc.gr}

% \author{E. S\"{u}li}
% \address{E. S\"{u}li\\ Mathematical Institute\\
%   24-29 St Giles' \\ Oxford OX1 3LB \\ UK}
% \email{suli@maths.ox.ac.uk}

\author{F. Theil}
\address{F. Theil\\ Mathematics Institute\\ Zeeman Building \\
  University of Warwick \\ Coventry CV4 7AL \\ UK }
\email{f.theil@warwick.ac.uk}

\date{\today}

\thanks{This work was supported by the EPSRC Critical Mass Programme
  ``New Frontiers in the Mathematics of Solids'' (OxMoS), by the EPSRC
  grant EP/H003096 ``Analysis of atomistic-to-continuum coupling
  methods''.}

\subjclass[2000]{65N12, 65N15, 70C20, 35J62, 35L72}

\keywords{atomistic models, coarse graining, Cauchy--Born model,
  wave equation, approximation error}

\begin{abstract}
  We present sharp convergence results for the Cauchy--Born
  approximation of general classical atomistic interactions, for both
  static and dynamic problems, for small data.
  % We present a sharp and rigorous approximation error analysis of the
  % Cauchy--Born approximation to an atomistic model of multi-body
  % interactions on an infinite Bravais lattice, for both static and
  % dynamic problems.
\end{abstract}

\maketitle

%===========================================================================

\section{Introduction}
The Cauchy--Born model (or, approximation) for {\em Bravais lattices}
is the most widely used nonlinear elasticity model of crystal
elasticity. It is obtained from the {\em Cauchy--Born rule}: the
stored energy per unit volume under a macroscopically homogeneous
deformation equals the energy per unit volume in the corresponding
homogeneous crystal. In some simple cases the Cauchy--Born
approximation for Bravais lattices can be justified as a statement
about minimum energy states \cite{friesecketheil02, cdkm06}. This
means that microstructural relaxation effects are ruled out. In less
restricted settings it has been shown to provide a highly accurate
approximation of crystal elastostatics \cite{E:2007a}.

It is highly desirable to obtain comparable results for evolutionary
problems because of the importance of phenomena linked to energy
transport and dissipation in crystallographic lattices
\cite{hltt08,ls08, ls11}.  The main contribution of this article is a
rigorous approximation error analysis of the Cauchy--Born wave
equation compared to the Newtonian equations of motion in the
atomistic model.  In this pursuit we are inspired by a recent effort
of Blanc, Le Bris and Lions \cite{BLBL:2012} who propose this problem
and solve it for various one-dimensional examples with different
classes of pair interactions.
% : for convex pair potentials a fairly
% complete convergence theory is established. For non-convex
% interactions, the authors consider a ``linearized scaling regime'' for
% which they prove convergence of Newton's equations to a linear wave
% equation.

Our own convergence result is formulated for a general class of
multi-body interactions in an infinite lattice, and only requires the
assumptions that the ``reference lattice'' is a ``stable'' Bravais
lattice (cf.~Section~\ref{sec:stab_small}) and that the interaction
strength decays sufficiently fast. We also provide a rigorous
approximation error analysis of the Cauchy--Born approximation for
static problems and the same conditions. The lattice stability
assumption is essential and cannot be removed (see
\S~\ref{sec:app_stab}).

% While the static
% problem is already fairly well understood, we nevertheless contribute
% new ideas to its analysis, which yield simplified, sharper, and
% considerably more general results than available previously.

% Our own analysis follows, in spirit, the earlier work of E and Ming
% \cite{EM:cb_dynamic}, who provide a formal convergence argument.

% requires stability of the underlying
% Bravais lattice which implies
% ellipticity of the associated Cauchy--Born operator and hence the
% existence of a smooth solution to the Cauchy--Born wave
% equation. Sharp consistency estimates are then used to show that the
% Cauchy--Born solution is an approximate solution of the atomistic
% model. By employing lattice stability we finally obtain ``closeness''
% of the atomistic and Cauchy--Born solutions. 

Our main dynamic result, Theorem~\ref{th:w:mainthm}, is concerned with
the Newtonian equations of motion and the nonlinear Cauchy--Born wave
equation. In the scaling limit where both spatial and temporal
fluctuation are of the order $\eps$ (the lattice spacing) we prove
that the atomistic solution converges a solution of the Cauchy--Born
wave equation. Moreover, difference between the atomistic and the
Cauchy-Born solution is of order $\eps^2$. The result for the static
case (Theorem \ref{th:errsm:mainthm}) is analogous.

These strong results are the consequence of sharp quantitative links
between particle models and continuum models, which are also useful in
a wider context \cite{OrVK:bqce:2012, LuOr:acta, OrZha:patch3d}.  In
particular the localization technique
\eqref{eq:intro_localisation_trick} provides a continuous
representation of discrete objects. A key concept is the notion of
atomistic stress, which gives a natural weak form of the first
variation. {\em Pointwise} second-order consistency of the
Piola--Kirchhoff stress of the Cauchy--Born model with the atomistic
stress is established in Theorem~\ref{th:stress:c2}.  Error estimates
between local minimizers of the Cauchy--Born and atomistic model are
an immediate consequence of consistency. The proof in the dynamic case
is based on a similar pointwise consistency result for the divergence
of the stresses.

\subsection{Main results}
We assume that the {\em scaled} total atomistic energy $E^\eps$ can be
written as
\begin{align*}
E^\eps(u) = \eps^d \sum_{\xi \in \eps\L} V\B( \B\{
\smfrac{u(\xi+\eps\rho)-u(\xi)}{\eps} \B\}_{\rho\in\L\setminus\{0\}}\B),
\end{align*}
where $\L := \Z^d$, $u :\eps \L \to \R^d$ is a discrete displacement
and $V$ a multi-body potential describing the interaction between a
site $\xi$ and the rest of the body. The continuum limit is
characterized by the Cauchy--Born energy density function $W(\mF) =
V(\{\mF\rho\}_{\rho\in\L\setminus\{0\}})$ and the associated
functional $E(u) = \int_{\R^d} W(\nabla u)\, \dd x$. We will show that
$E$ characterizes the asymptotic behavior of solutions of static and
dynamic problems associated with $E^\eps$. We assume throughout this
section that $d \leq 3$ and that $\L$ is stable (see
\S~\ref{sec:stab_small}). \\[1em]
{\bf Theorem A (Elastostatics).}  {\em There exists constants $C_\mathrm{stat}, C,\delta_0
  >0$ such that for all $f:\R^d \to \R^d$, $f^\eps: \eps\L \to \R^d$
  satisfying
  \begin{align*}
    \|f\|_{1} \leq \delta_0 \quad \text{and} \quad
    \|f - f^\eps\|_{-1} \leq C_\mathrm{stat} \eps^2,
  \end{align*}
  and for $\eps$ sufficiently small, there exist local minimizers $u, u^\eps$ of the energies
  \begin{align*}
    E(u) - \int_{\R^d} f\cdot u \, \dd x \quad \text{and} \quad
    E^\eps(u) - \eps^d \sum_{\xi \in \eps\L} f^\eps(\xi) \cdot u^\eps(\xi),
  \end{align*}
  which also satisfy the bound $\| \nabla u - \nabla u^\eps\|_{L^2} \leq C \eps^2$.
  % \begin{align*}
  %   \| \nabla u - \nabla u^\eps\|_{L^2} \leq C \eps^2.
  % \end{align*}
}\\[1em]
The definition of the norms $\|\,\cdot\, \|_1$, $\|\,\cdot\,\|_{-1}$
and a precise formulation of the result are given in
Theorem~\ref{th:errsm:mainthm}.

\medskip \noindent {\bf Theorem B (Elastodynamics).} There exist
constants $\kappa, T, C > 0$ such that, for all initial states
$(u_0,u_1)$, $(u_0^\eps, u_1^\eps)$ satisfying the bounds
  \begin{align*}
    \| \nabla u_0 \|_{L^\infty} \leq \kappa \qquad \text{and} \qquad
    \| \nabla u_0 - \nabla u_0^\eps \|_{L^2} + \| u_1 - u_1^\eps
    \|_{L^2} \leq C_\mathrm{init}\, \eps^2,
  \end{align*}
  where $C_{\rm init}$ is independent of $\eps$, and for all
  sufficiently small $\eps$, there exist unique solutions $u \in
  C^2([0,T],H^2(\R^d))$ and $u^\eps \in C^2([0, T], \ell^2(\L))$ of
  the Cauchy problems
  \begin{align*}
    &\ddot{u} - {\rm div} \Scb(\nabla u)=0,
    & u(t=0)=u_0, \qquad \dot{u}(t=0) = u_1,\\
&\ddot{u}^\eps +\delta E^\eps(u^\eps) = 0,
& u^\eps(t=0) = u^\eps_0, \qquad \dot{u}^\eps(t=0) = u^\eps_1,
\end{align*}
where $\delta E^\eps$ denotes first variation of $E^\eps$, and
$\Scb_{ij}(F) = \frac{\partial W}{\partial \mF_{ij}}(\mF)$ is the
first Piola--Kirchhoff stress tensor in the Cauchy--Born model. The
solutions $u, u^\eps$ satisfy the estimate
\begin{equation}
  \label{eq:intro_errest_dyn}
  \max_{0 \leq t \leq T} \b( \|\nabla u(t) - \nabla \tilde{u}^\eps(t) \|_{L^2} + \| \dot{u} -
\dot{u}^\eps(t) \|_{L^2} \b) \leq C \eps^2.
\end{equation}

The definition of the norm $\|\,\cdot\,\|_3$ and the precise
formulation of the result is given in Theorem~\ref{th:w:mainthm}. We
note that the condition $\| \D u_0 \|_{L^\infty} \leq \kappa$ is a
fairly mild condition, which only ensure that $u_0$ is a ``stable
configuration''.

We make no statement about the maximal time interval for which the
estimate \eqref{eq:intro_errest_dyn} holds. Such a statement could be
made provided one establishes an atomistic G\aa rding
inequality. Moreover, such a result would also allow us to treat large
deformations in the static case. We stress, however, that our
estimates hold for a {\em macroscopic time interval}.

\subsection{New Concepts}
The comparison of discrete displacements $u^\eps$ with continuous
displacements $u$ is achieved by the introduction of several
approximation operators derived from a nodal basis function $\zeta$
with compact support (see \S~\ref{sec:interp} for the details).  We
define the Lipschitz continuous interpolation $u^\eps$ and a
quasi-interpolation $\tilde u^\eps$, which has a Lipschitz-continuous
gradient. The key property property of the framework is that it
delivers an integral representation of finite differences
\begin{align}
 \label{eq:intro_localisation_trick}
 \frac{\tilv(\xi+\eps\rho) - \tilv(\xi)}{\eps} = \int_\Om \ww_{\xi,\rho}^\eps(x) \D_\rho \barv(x) \dx,
  \quad \text{where} \quad
  % \label{eq:defn_ww}
  \ww_{\xi,\rho}^\eps(x) := \eps^{-1} \int_0^1 \zz(\xi+t\eps\rho-x)\,\dt.
\end{align}
The kernel $\ww_{\xi,\rho}^\eps$ can be understood as a mollified
version of the line measure supported on the bond $\{ \xi+t\rho \sep t
\in [0,\eps]\}$.

A version of this technique was proposed by Shapeev
\cite{Shapeev:localisation} for the construction of
atomistic-to-continuum coupling schemes. Here we propose an
analytically convenient and stable variant of his idea, which is
analyzed in detail in \cite{OrShSu:2012}. % Here, we show how to exploit
% this integral representation to obtain sharp connections between
% atomistic and continuum models.

Using \eqref{eq:intro_localisation_trick} we can construct an integral
represenation of the first variation of the atomistic energy,
\begin{equation}
  \label{eq:intro_delEeps}
  \b\< \del\E^\eps(u^\eps), \tilv \b\> =\, \int_\Om \mS^\eps(u^\eps; x) : \D v(x)
  \dx,
\end{equation}
for all compactly supported virtual displacements $v$, where
$\mS^\eps$ is the {\em stress} associated with a discrete displacement
$u^\eps$,
\begin{align} \label{eq:intro_stressdef} \mS^\eps(u^\eps; x) =\,& \sum_{\xi \in
    \eps\L} \sum_{\xi' \in \eps\L \setminus \{\xi\}} \b[f_{\xi,\xi'}
  \otimes (\xi'-\xi)\b] \ww_{\xi,\rho}^\eps(x);
\end{align}
here, $f_{\xi,\xi'}$ is the force acting between atoms $\xi$ and
$\xi'$ due to the site-energy associated with atom $\xi$. A precise
definition of the atomistic stress is given in \S~\ref{th:Sa}. We will
prove in \S~\ref{sec:stress:2ndorder} that $|\mS^\c - \mS^\eps| =
O(\eps^2)$ and in \S~\ref{sec:w:cons} that $|{\rm div} (\mS^\c -
\mS^\eps)| = O(\eps^2)$, which are the key technical ingredient
required for proving Theorems A and B.

\begin{remark}
  The tensor $\mS^\eps$ is closely related to the stress defined by
  Hardy \cite{Hardy}. Indeed, equation (4.5) in that work is
  essentially an Eulerian version of \eqref{eq:intro_stressdef}, for
  pair interactions, with a generic weighting function
  $\ww_{\xi,\rho}^\eps$. A general account of stress in molecular dynamics
  simulations is given in the recent work of Admal and Tadmor
  \cite{AdTa:2010}.

  A concern discussed in \cite{AdTa:2010} is the {\em non-uniqueness}
  of stress: note that $\mS^\eps$ is defined by
  \eqref{eq:intro_delEeps} only up to a divergence-free
  tensor. Indeed, it is possible to add ``discrete null-Lagrangians''
  to the atomistic energy, or decompose the atomistic energy in
  different ways, which would lead to different definitions of the
  atomistic stress $\mS^\eps$.

  Concerning this question, our work provides a selection mechanism
  based on comparison with the Cauchy--Born stress. If $V$ is selected
  to be ``as local as possible'' (cf. \S~\ref{sec:intro:V:decya}) and
  to satisfy an inversion symmetry
  (cf. \S~\ref{sec:intro:V:symmetry}), then the atomistic stress
  $\mS^\eps$ and the Cauchy--Born stress $\mS^\c$ are second-order
  close. Since there is a natural definition for $\mS^\c$, our version
  of the atomistic stress is reasonable whenever the atomistic
  configuration is ``close'' to an elastic continuum
  configuration.
  % Indeed, one might argue that  so that the
  % notation of stress is even meaningful.
\end{remark}

\subsection{Outline}
Since we work in an infinite domain, and admit an infinite interaction
range, even the definition of the atomistic energy is
non-trivial. Section \ref{sec:prelims} is devoted to this task. At the
same time we establish various auxiliary results that are useful for
the subsequent analysis. In Section \ref{sec:intro:cb} we define and
analyze the Cauchy--Born approximation; in particular, we establish
differentiability of the stored energy function and establish a
convenient functional analytic setting. In Section \ref{sec:stress} we
derive and analyze the atomistic stress, which plays a prominent role
in our analysis.  In Section \ref{sec:errsm} we present a rigorous
approximation error analysis of the static Cauchy--Born approximation.
Finally, in Section \ref{sec:wave} we establish approximation error
estimates between the solutions of Newton's equations of motion and
the Cauchy--Born wave equation.

\subsection{Summary of notation}
\label{sec:intro:notation}
Throughout, $\R$ denotes the real numbers, and $d \in \N$ the space
dimension.

Positions in space are usually denoted by $x, y, z \in \R^d$, while
lattice sites are denoted by $\xi,\eta \in \L := \Z^d$. Displacements,
either continuous or discrete, are denoted by $u, v, w$. Lattice
directions are denoted by $\rho,\vsig,\tau \in \Rg := \Z^d
\setminus\{0\}$.

Matrices are denoted by capital letters, $\mA, \mF, \mG, \mS \in \R^{d
  \times d}$. In particular, $\mA$ is reserved for the lattice
orientation (beginning of \S~\ref{sec:prelims}), and $\mS$ for stress
tensors.

If $f : \R^d \to \R^m$ is differentiable in $x$ then we denote its
Jacobi matrix by $\D f(x)$ and a direction derivative by $\D_r f(x) =
\D f(x) \cdot r$, $r \in \R^d$. Higher derivatives are denoted by
$\D^j f$ and are understood as $j$-linear forms with range in $\R^m$.

If $\mathscr{A}, \mathscr{B}$ are Banach spaces and $\mathscr{F} :
\mathscr{A} \to \mathscr{B}$ is $j$ times Fr\'echet differentiable (or
simply, differentiable) in $a$ then we denote its $j$-th derivative
(or, variation if $\mathscr{B} = \R$) by $\del^j \mathscr{F}$, which
is understood as a $j$-linear form with range in $\mathscr{B}$. If $a,
a_1, \dots, a_j \in \mathscr{A}$, then we write $\del^j
\mathscr{F}(a)[a_1, \dots, a_j]$ to evaluate this form.

If $\ell : \mathscr{A} \to \R$ is a linear functional, then we write
$\ell(a) = \< \ell, a\>$. If $\E : \mathscr{A} \to \R$ is
differentiable, then we write $\del\E(a)[a_1] = \< \del\E(a),
a_1\>$. If it is twice differentiable then we will also write
$\ddel\E(a)[a_1, a_2] = \< \ddel\E(a) a_1, a_2\>$.

We use $L^p, W^{1,p}$, for $p \in [1,\infty]$, to denote the standard
Sobolev and Lebesgue spaces, usually on the domain $\R^d$ (and
otherwise specified). We will also employ the so-called {\em
  homogeneous Sobolev spaces}, $\WWh^{j,p}$, which are defined in
\S~\ref{sec:homsob}.  Negative-norm (dual) spaces are denoted by
$W^{-1,p} = (W^{1,p'})^*$ and $\WWh^{-1,p} = (\WWh^{1,p'})^*$, where
$p' := p / (p-1) \in [1,\infty]$ is the dual Sobolev index.

The discrete ``Lebesgue space'' is denoted by $\ell^p$, usually with
domain $\Z^d$ and otherwise specified. Since most discrete versions of
$W^{1,p}$ would be equivalent to $\ell^p$, we will not define such a
space. However, we make heavy use of {\em discrete homogeneous Sobolev
  spaces} $\Ush^{1,p}$, which are defined in \S\ref{sec:interp}.

Finally, we make the convention that ``$\lesssim$'' stands for ``$\leq
C$'', where $C$ is a generic constant that may not depend on any data
in the model, nor on any functions involved in the inequality. In
particular, we will make explicit all dependence on the interaction
potential, which is crucial since we admit an infinite interaction
range.

\section{The Atomistic Energy}
\label{sec:prelims}
We formulate an atomistic model with classical multi-body interactions
on an infinite Bravais lattice. As discrete reference domain, we
choose $\L := \Z^d$, where $d \in \N$ (later restricted to $d \leq 3$)
is fixed throughout. We admit deformations of the form $y(\xi) = \mA
\xi + u(\xi)$, where $u$ is an unknown displacement and $\mA \in \R^{d
  \times d}$, $\det\mA > 0$, defines the reference state of the
system, $\mA \cdot \L$, which may be an arbitrary Bravais lattice.
% (We use $\Z^d$ instead
% of $\mA \Z^d$ as the reference domain, since this choice simplifies
% the notation.)

We now present a formal definition of the atomistic potential energy,
which we make rigorous throughout the remainder of the section. Let
$\Rg := \L \setminus \{0\}$ denote the set of lattice directions.  For
discrete maps $v : \L \to \R^m$, $m \in \N$, we define the finite
differences and finite difference stencils
\begin{align*}
  \Da{\rho} v(\xi) :=\,& v(\xi+\rho) - v(\xi), \qquad \text{for } \xi
  \in \L, \rho \in
  \Rg, \quad \text{and} \\
  Dv(\xi) :=\,& \b( \Da{\rho} v(\xi) \b)_{\rho \in \Rg}, \qquad
  \text{for } \xi \in \L.
  % \quad \text{and} \\
  % \Da{\Rg} v(\xi) :=\,& \b( \Da{\rho} v(\xi) \b)_{\rho \in \Rg}, \qquad
  % \text{for } \xi \in \L, \Rg \subset \L.
\end{align*}
A convenient space of finite-difference stencils is
\begin{displaymath}
  \DV := \b\{ \bfg = (g_\rho)_{\rho \in
  \Rg} \bsep g_\rho \in \R^d, \|\bfg\|_{\DV} < \infty \b\},
\end{displaymath}
equipped with the norm $\|\bfg\|_{\DV} := \max_{\rho \in \Rg}
|g_\rho| / |\rho|$.

Next, we assume that there exists a site-energy $V : \DV \to \R
\cup\{\pm\infty\}$, so that the atomistic potential energy of a
displacement $u : \L \to \R^d$ can be written, {\em formally}, as
\begin{equation}
  \label{eq:defn_Ea_pre}
  \Ea(u) := \sum_{\xi \in \L} \Es_\xi(u), \quad \text{where }  \Es_\xi(u)
  := V(Du(\xi)).
\end{equation}
Note that the site-energies are not well-defined for general $u$, and
moreover their sum need not exist.  In the remainder of this section,
we introduce a discrete function space setting in which we can make
\eqref{eq:defn_Ea_pre} rigorous.

We remark that $V$ implicitly depends on $\mA$, but since $\mA$ is
fixed throughout, we suppress this dependence.

% , which encodes the far-field boundary condition
% \begin{equation}
%   \label{eq:bc_vague}
%   u(\xi) \sim 0 \quad \text{as } |\xi| \to \infty,
% \end{equation}
% By assuming that $V(0) = 0$ (achieved by replacing $V$ with $V -
% V(0)$; that is, we consider the {\em energy difference}) we will be
% able to give a rigorous interpretation to \eqref{eq:defn_Ea_pre} for a
% sufficiently large class of displacements. Along the way, we establish
% various technical results that will be useful throughout the remainder
% of the paper.

% The second goal of this section is to make rigorous the meaning of the
% far-field boundary condition for a deformation field $y : \R^d \to
% \R^d$ (and similarly for discrete fields)

\subsection{Interpolation of lattice functions}
\label{sec:interp}
We denote the set of all vector-valued lattice functions by $\Us$ and
those with compact support by $\Usz$:
\begin{displaymath}
  \Us := \b\{ v : \L \to \R^d \b\} \quad \text{and} \quad
  \Usz := \b\{ v \in \Us \bsep \supp(v) \text{ is compact} \b\}.
\end{displaymath}
To facilitate the transition between continuous and discrete maps we
introduce two \mbox{(quasi-)} interpolants of lattice functions.

% We interpret the lattice $\L$ as the vertex set of a cubic grid
% \begin{displaymath}
%   \QQ := \b\{ Q = \xi + [0, 1]^d \bsep \xi \in \L \b\}.
% \end{displaymath}
Let $\zz \in \WW^{1,\infty}(\R^d; \R)$ satisfy $\zz(\xi) = 0$ for $\xi
\in \L \setminus\{0\}$ and $\zz(0) = 1$. We understand $\zz(\;\cdot\; -
\xi)$ as a nodal basis function associated with the site $\xi$, and
define the {\em first-order interpolant}
\begin{equation}
  \label{eq:interp:S1_interp}
  \barv(x) := \sum_{\xi \in \L} v(\xi) \zz(x - \xi), \qquad
  \text{for } v \in \Us.
\end{equation}
We shall assume throughout that $\zz \geq 0$, $\zz$ has compact
support, $\zz$ is symmetric about the origin $\zz(-x) = \zz(x)$, and
that the associated interpolation operator reproduces affine
functions: $\sum_{\xi\in \L} (a+b\cdot \xi) \zz(x-\xi) = a + b \cdot
x$ for all $a \in \R, b \in \R^d$. The latter property implies, in
particular, that $\int_{\R^d} \zz \dx = 1$.

Following \cite{Shapeev:localisation}, we also define a
quasi-interpolant obtained through convolution of $\barv$ with $\zz$:
\begin{align}
  \label{eq:interp:defn_tildev}
  \tilv(x) :=\,& (\zz \ast \barv)(x) = \int_{\R^d}
  \zz(x - y) \barv(y) \dy.
% , \quad \text{or, equivalently,} \\
%   \label{eq:interp:defn_tildev_2}
%    \tilv(x) =\,& \sum_{\xi \in \L} v(\xi) \zzz(x-\xi),
%   \qquad \text{where} \quad  \zzz(x) := (\zz \ast \zz)(x).
\end{align}
In general, $v(\xi) \neq \tilv(\xi)$ for $\xi \in \L$, hence it is
only a quasi-interpolant. The introduction of this second interpolant
leads to a set of techniques centered around the {\em localization
  formula} \cite{Shapeev:localisation, OrShSu:2012} (cf. the scaled
version in \eqref{eq:intro_localisation_trick})
\begin{equation}
  \label{eq:localisation_trick}
  D_\rho \tilv(\xi) = \int_{\R^d} \ww_{\xi,\rho}(x) \D_\rho v(x) \dx,
  \qquad \text{where } \quad
  \ww_{\xi,\rho}(x) := \int_0^1 \zz(\xi+t\rho-x) \dt,
\end{equation}
which yield surprisingly strong connections between the atomistic
model and its continuum counterpart (in particular, the definition and
analysis of the atomistic stress in \S\ref{sec:stress}). To prove
\eqref{eq:localisation_trick}, we simply note that
\begin{displaymath}
  D_\rho \tilv(\xi) = \int_0^1 \D_\rho
  \tilv(\xi+t\rho) \dt
  = \int_\Om \int_0^1 \zz(\xi+t\rho-x) \dt \, \D_\rho \barv(x)
  \dx.
\end{displaymath}

\begin{remark}
  A canonical choice for $\zz$ is the Q1-nodal basis function
  \begin{displaymath}
    \zz(x) := {\textstyle \prod_{i = 1}^d} \max\b(0, 1 - |x_i|\b),
  \end{displaymath}
  then ${\rm supp}(\zz) = [-1,1]^d$ and $\zz$ is piecewise
  multi-linear. In this case $\{ \barv \sep v \in \Us \}$ is the Q1
  finite element space, or equivalently, the space of tensor product
  linear B-splines and $\{ \tilv \sep v \in \Us \}$ is the space of
  cubic tensor product B-splines \cite{Hollig2003}.  However, other
  choices are equally possible, and indeed necessary in some
  situations \cite{OrVK:bqce:2012}. Since none of our results require
  explicit knowledge of the type of interpolant, we admit the most
  general case.
\end{remark}

\medskip We now collect several auxiliary results on the lattice
interpolants introduced above, all of which are established in
\cite{OrShSu:2012}.

\def\culo{c_1}
\def\cuhi{c_2}
\def\cDulo{c_0'}
\begin{lemma}
  \label{th:interp:basic_prop}
  \label{th:norm_equiv_Lp}
  Let $v \in \Us$, then its first-order interpolant
  \eqref{eq:interp:S1_interp} belongs to $\WW^{1,\infty}_\loc(\R^d;
  \R^d)$, and $\tilv \in \WW^{3,\infty}_\loc(\R^d; \R^d)$. Moreover,
  for any $p \in [1, \infty]$,
  \begin{align}
    \label{eq:interp:norm_equiv_Lp}
    \| v \|_{\ell^p} \lesssim \|\tilv \|_{\ell^p} \leq \| \tilv \|_{\LL^p}
    \leq \| \barv \|_{\LL^p} \lesssim \| v \|_{\ell^p}&
    \qquad \forall v \in \Us, \quad \text{and} \\
    \label{eq:interp_stab}
    \| \D \barv \|_{\LL^p}
    \lesssim \| \D \tilu \|_{\LL^p}
    \leq \| \D \barv \|_{\LL^p}&
    \qquad \forall v \in \Us.
  \end{align}
  (All constants in the above estimates are independent of $p$.)
\end{lemma}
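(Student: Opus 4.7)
The regularity assertions follow directly from the structure of the interpolants. For $\bar v$, local finiteness of the sum in \eqref{eq:interp:S1_interp} is immediate because $\zz$ has compact support, and each term $v(\xi)\zz(\cdot-\xi)$ lies in $\WW^{1,\infty}$; hence $\bar v\in\WW^{1,\infty}_\loc$. For $\tilv = \zz\ast\bar v$, I shift derivatives onto whichever factor is most regular: $\D\tilv = \zz\ast\D\bar v\in\WW^{1,\infty}_\loc$ by Young's inequality, $\D^2\tilv = \D\zz\ast\D\bar v\in L^\infty_\loc$, and for the third derivative I rewrite $\tilv = (\zz\ast\zz)\ast\mu_v$ with the lattice measure $\mu_v := \sum_{\xi\in\L} v(\xi)\,\delta_\xi$ and use the additional smoothness of the auto-convolution $\zz\ast\zz$, which exceeds that of $\zz$ by one degree under the standing assumptions (nonnegativity, compact support, affine reproduction), as illustrated by the canonical Q1 example where $\zz\ast\zz$ is the cubic tensor-product B-spline.

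For the norm chain in \eqref{eq:interp:norm_equiv_Lp}, the workhorse is the combination of the partition-of-unity identity $\sum_{\xi\in\L}\zz(x-\xi)=1$ (a consequence of affine reproduction) with Jensen's inequality applied to the probability measure $\zz(x-\xi)\,d\mu(\xi)$. This yields
$$|\bar v(x)|^p \leq \sum_{\xi\in\L} |v(\xi)|^p \zz(x-\xi),$$
and integrating gives $\|\bar v\|_{\LL^p}\lesssim\|v\|_{\ell^p}$ with a constant depending only on $\zz$. The reverse inequality $\|v\|_{\ell^p}\lesssim\|\bar v\|_{\LL^p}$ follows from local coercivity: on a neighbourhood of each lattice point $\zz$ is bounded below on a set of positive measure, which gives $|v(\xi)|^p\lesssim\int_{B(\xi)}|\bar v|^p\,\dx$, and summing over disjoint such neighbourhoods closes the estimate. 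For the $\tilv$ side of the chain I apply the same tools to $\tilv=\zz\ast\bar v$: Young's inequality $\|\tilv\|_{\LL^p}\leq\|\zz\|_{\LL^1}\|\bar v\|_{\LL^p}=\|\bar v\|_{\LL^p}$ handles one direction, while the Jensen step applied to the sampling map $\xi\mapsto\tilv(\xi)=\int\zz(\xi-y)\bar v(y)\,\dy$ yields $\|\tilv\|_{\ell^p}\leq\|\bar v\|_{\LL^p}$ and, after a further application of $\bar v=\bar v$-reconstruction, $\|v\|_{\ell^p}\lesssim\|\tilv\|_{\ell^p}$.

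For the gradient estimates \eqref{eq:interp_stab}, the decisive identity is $\D\tilv = \zz\ast\D\bar v$, from which Young's inequality immediately delivers the upper bound $\|\D\tilv\|_{\LL^p}\leq\|\zz\|_{\LL^1}\|\D\bar v\|_{\LL^p}=\|\D\bar v\|_{\LL^p}$. The reverse estimate $\|\D\bar v\|_{\LL^p}\lesssim\|\D\tilv\|_{\LL^p}$ is the genuine obstacle, as it is a quantitative deconvolution. I would approach it by expressing the components of $\D\bar v$ cell-by-cell in terms of the lattice finite differences $\Da{e_i}v$ and then, via the localisation formula \eqref{eq:localisation_trick}, in terms of values of $\D\tilv$ averaged against the kernels $\ww_{\xi,e_i}$; stability of this reconstruction reduces to the $\ell^p$--$\LL^p$ equivalence already established applied to the stencil $(\Da{e_i} v)_{\xi\in\L}$.

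The main technical hurdle is keeping \emph{all} constants independent of $p\in[1,\infty]$. Young's inequality and Jensen's inequality both deliver this for free, so the only dangerous step is the deconvolution bound in \eqref{eq:interp_stab}; this is handled by an explicit algebraic reconstruction of $\D\bar v$ from $\D\tilv$ in terms of $\zz$-dependent finite linear combinations, whose operator norm on $\LL^p$ is controlled by the $\ell^1$ norm of the corresponding coefficients and hence independent of $p$. For the detailed algebra I would refer to \cite{OrShSu:2012}, where the $\zz$-dependent constants are tracked explicitly.
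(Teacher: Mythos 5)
First, a point of reference: the paper does not prove this lemma at all — it is listed among several auxiliary results ``all of which are established in \cite{OrShSu:2012}'' — so there is no in-paper proof to match your argument against. Judged on its own terms, your proposal handles the forward directions correctly and in the standard way: $\|\barv\|_{\LL^p}\lesssim\|v\|_{\ell^p}$ via Jensen's inequality against the partition of unity $\sum_{\xi}\zz(\cdot-\xi)=1$ (in fact with constant $1$), and $\|\tilv\|_{\LL^p}\le\|\barv\|_{\LL^p}$, $\|\D\tilv\|_{\LL^p}\le\|\D\barv\|_{\LL^p}$ via Young's inequality with $\|\zz\|_{L^1}=1$; the $p$-uniformity of these constants is indeed automatic.

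The gaps are in the three inverse directions, and they are genuine. (i) Writing $\tilv=(\zz\ast\zz)\ast\mu_v$ is the right move, but a gain of \emph{one} derivative for $\zz\ast\zz$ over $\zz$ only gives $\zz\ast\zz\in\WW^{2,\infty}$ and hence $\tilv\in\WW^{2,\infty}_\loc$, one short of the claim; what is needed is a gain of two derivatives (exactly what your Q1 example exhibits, $C^0\to C^2$), and that requires more than $\zz\in\WW^{1,\infty}$, e.g.\ $\D\zz$ of bounded variation so that $\D^3(\zz\ast\zz)=\D^2\zz\ast\D\zz$ is an $L^\infty$ function. (ii) The leftmost inequality is $\|v\|_{\ell^p}\lesssim\|\tilv\|_{\ell^p}$, which is strictly stronger than the $\|v\|_{\ell^p}\lesssim\|\barv\|_{\LL^p}$ you argue for (since $\|\tilv\|_{\ell^p}\le\|\barv\|_{\LL^p}$). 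On the lattice, $\tilv(\xi)=\sum_{\eta}(\zz\ast\zz)(\xi-\eta)\,v(\eta)$ is a discrete convolution of $v$, and recovering $v$ requires inverting it with an $\ell^1$ inverse kernel (Wiener's lemma, i.e.\ nonvanishing of the symbol of $(\zz\ast\zz)|_\L$); your ``further application of reconstruction'' does not supply this. Moreover, your local-coercivity route to $\|v\|_{\ell^p}\lesssim\|\barv\|_{\LL^p}$ overlooks cancellation from neighbouring basis functions; the correct local argument uses the nodal property $\barv(\xi)=v(\xi)$ combined with an inverse estimate on the finite-dimensional local spline space. (iii) For $\|\D\barv\|_{\LL^p}\lesssim\|\D\tilv\|_{\LL^p}$ you correctly single out the deconvolution as the crux, but the route you sketch runs the wrong way: \eqref{eq:localisation_trick} expresses $\Da{\rho}\tilv(\xi)$ in terms of $\D\barv$, not $\D\barv$ in terms of $\D\tilv$, so it cannot reconstruct $\D\barv$. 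The missing ingredient in both (ii) and (iii) is the $p$-uniform invertibility of the convolution by $\zz$ on the relevant spline space; deferring that to \cite{OrShSu:2012} mirrors what the paper itself does, but it leaves the substantive half of \eqref{eq:interp:norm_equiv_Lp} and \eqref{eq:interp_stab} unproved in your write-up.
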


\medskip
Next, we state a useful embedding result; of particular interest is
the case $j = m = 1$ and $q = \infty$, which states (employing
\eqref{eq:interp_stab}) that $\| \D u \|_{L^\infty} \lesssim \| \D u
\|_{L^p}$ for all $p \in [1, \infty]$. The proof uses the fact that
the ``mesh size'' in $\Z^d$ is one, and that $\ell^p \subset \ell^\infty$.

\begin{lemma}
  \label{th:embedding}
  Let $0 \leq j \leq m \leq 3$ and $p \leq q \in [1, \infty]$, then
  $\| \D^m \tilu \|_{\LL^q} \lesssim \|\D^j \tilu \|_{\LL^p}$ for all
  $u \in \Us$.
\end{lemma}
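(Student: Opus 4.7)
The plan is to localize the estimate onto unit cubes and then sum, exploiting that the lattice spacing equals one so that no inverse-estimate factor $h^{-1}$ appears. Fix a reference cube $\omega := [0,1]^d$ and a sufficiently large enlargement $\omega' \supset \omega$ so that $\tilu|_{\omega'}$ depends only on finitely many lattice values $u(\eta)$; this is possible because $\zz$, and hence $\zz \ast \zz$, has compact support. The restriction space $V := \{\tilu|_{\omega'} : u \in \Us\}$ is then a finite-dimensional subspace of $\WW^{m,\infty}(\omega')$.

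On $V$ I compare the two seminorms $\nu_j(v) := \|\D^j v\|_{\LL^p(\omega')}$ and $\nu_m(v) := \|\D^m v\|_{\LL^q(\omega)}$. Since vanishing of $\D^j v$ on $\omega'$ forces $v$ to be a polynomial of degree strictly less than $j$ on $\omega'$, and $m \geq j$ implies $\D^m v \equiv 0$ on $\omega \subseteq \omega'$, we have $\ker \nu_j \subseteq \ker \nu_m$. Passing to the finite-dimensional quotient $V/\ker \nu_j$, $\nu_j$ descends to a genuine norm while $\nu_m$ descends to a seminorm, and norm equivalence on finite-dimensional spaces yields $\nu_m(v) \lesssim \nu_j(v)$ for every $v \in V$, with a constant depending only on the reference configuration.

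Translation invariance of the interpolation procedure transports this estimate to every lattice cube. Writing $\omega_\xi := \xi + \omega$ and $\omega_\xi' := \xi + \omega'$, we obtain
\[
\|\D^m \tilu\|_{\LL^q(\omega_\xi)} \lesssim \|\D^j \tilu\|_{\LL^p(\omega_\xi')} \qquad \text{for every } \xi \in \L,
\]
with a constant independent of $\xi$ and $u$. Summing the $q$-th powers of this estimate (or taking the supremum when $q = \infty$), the elementary embedding $\|(a_\xi)\|_{\ell^q} \leq \|(a_\xi)\|_{\ell^p}$ for $p \leq q$ converts the right-hand side into the appropriate power of $\sum_\xi \|\D^j \tilu\|_{\LL^p(\omega_\xi')}^p$, which in turn is controlled by $\|\D^j \tilu\|_{\LL^p(\R^d)}^p$ thanks to the bounded overlap of the family $\{\omega_\xi'\}_\xi$. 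The main delicate point is ensuring that the constant in the local seminorm comparison is genuinely uniform in $\xi$; this is why one must identify a reference finite-dimensional model space $V$ once and for all, and then invoke integer translation invariance rather than arguing site-by-site.
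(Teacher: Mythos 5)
Your argument is correct and fleshes out precisely the strategy the paper indicates (the proof itself is deferred to the cited reference, with the hint that one uses the unit mesh size of $\Z^d$ together with $\ell^p\subset\ell^q$): a uniform finite-dimensional inverse/embedding estimate on translated unit cells, followed by summation using $\|(a_\xi)\|_{\ell^q}\leq\|(a_\xi)\|_{\ell^p}$ and bounded overlap. The kernel inclusion $\ker\nu_j\subseteq\ker\nu_m$ for $j\leq m$ and the integer-translation invariance are exactly the points that make the cell-wise constant uniform, so no gap remains.
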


% In view of Lemma \ref{th:inverse_conv} we can now define the nodal
% interpolant
% \begin{equation}
%   \label{eq:defn_nodal_interp}
%   \Itil u := \widetilde{\Conv^{-1} u}, \qquad \text{for } u \in
%   \Ush^{1,2}
% \end{equation}
% % \begin{equation}
% %   \label{eq:defn_nodal_interp}
% %   \begin{split}
% %     \Itil u :=\,& \widetilde{\Conv^{-1} u}, \qquad \text{for } u \in
% %     \Ush^{1,2}, \quad \text{and} \\
% %     \Itil y :=\,& \mA x + \Itil u, \qquad
% %     \text{for } y = \mA x + u \in \Us.
% %   \end{split}
% % \end{equation}
% We also note that Lemma \ref{th:inverse_conv} implies
% \begin{equation}
%   \label{eq:bounds_Du_nodal_interp}
%   c \| \D\baru \|_{\LL^2} \leq \|\D\Itil u \|_{\LL^2} \leq C \|\D\baru \|_{\LL^2}.
% \end{equation}

% \begin{proof}
%    Let $Q \in \QQ$. If $\|\D^j\tilu\|_{L^p(Q)} = 0$ then $\| \D^m
%   \tilu \|_{L^p(Q)} = 0$. Since $\{ \tilu|_Q \sep u \in \Us\}$ is
%   finite-dimensional, a straightforward compactness argument yields
%   $\|\D^m \tilu \|_{L^p(Q)} \lesssim \| \D^j \tilu \|_{L^p(Q)}$ for
%   all $u \in \Us$. Since the inequality is independent of $Q$, summing
%   over $Q \in \QQ$ completes the proof.
%   (i) For each fixed $Q \in \QQ$, the space $\{ \baru|_Q \sep u \in
%   \Us \}$ is finite-dimensional; hence $\| \D\baru \|_{\LL^q(Q)}
%   \lesssim \|\D\baru \|_{\LL^q(Q)}$. Summing over $Q \in \QQ$ and
%   empoying the embedding $\ell^p \subset \ell^q$ yields the result.
% \end{proof}

\subsection{The space of admissible displacements}
\label{sec:a:defm_space}
Since the atomistic model is formally translation invariant, we define
equivalence classes
\begin{displaymath}
  [u] := \{ u + t \sep t \in \R^d\}, \qquad \text{for } u \in \Us,
\end{displaymath}
and, for $p \in [1, \infty]$, define corresponding function spaces
\begin{equation}
  \label{eq:defn_Us12}
  \Ush^{1,p} := \b\{ [u] \bsep u \in \Us, \| \D u \|_{\LL^p} <
  +\infty \b\}.
  % \begin{split}
    % \Ush :=\,& \b\{ [u] \sep u \in \Us \b\}, \quad \text{and} \\
  % \end{split}
\end{equation}
We will not make the distinction between $u$ and $[u]$, whenever it is
possible to do so without confusion, for example, when a statement or
function with argument $u$ is translation invariant.

\begin{proposition}
  \label{th:interp:energy-space}
  $\Ush^{1,p}$ is a Banach space. For $p \in (1, \infty)$, the
  subspace $\{ [v] \sep v \in \Usz\}$ is dense in $\Ush^{1,p}$. 
\end{proposition}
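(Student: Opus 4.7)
\emph{Norm.} The quantity $\|[u]\|_{\Ush^{1,p}} := \|\D \bar u\|_{L^p}$ is a norm on equivalence classes: it is translation-invariant because $\D$ annihilates constants, and non-degenerate because $\D \bar u = 0$ a.e.\ forces $\bar u$, hence $u$, to be constant, i.e.\ $[u]=[0]$. By \eqref{eq:interp_stab}, $\|\D\tilde u\|_{L^p}$ is an equivalent norm.

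\emph{Completeness.} Let $\{[u_n]\}$ be Cauchy in $\Ush^{1,p}$. Normalize representatives by subtracting a constant so that $\mint_{B_1}\bar u_n\,\dd x = 0$; this is legitimate since the interpolation reproduces constants, so $\overline{u_n - c} = \bar u_n - c$. Then $\D\bar u_n$ is Cauchy in $L^p(\R^d;\R^{d\times d})$, and the standard Poincar\'e-type inequality
\begin{displaymath}
  \|v\|_{W^{1,p}(B_R)} \leq C_R \b(\|\D v\|_{L^p(B_R)} + \b|\mint_{B_1} v\,\dd x\b|\b),
\end{displaymath}
applied to $v = \bar u_n - \bar u_m$, yields that $\bar u_n$ is Cauchy in $W^{1,p}(B_R;\R^d)$ for every $R>0$, so $\bar u_n \to U$ in $W^{1,p}_\loc$. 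For each $R$, the image $\{\bar v|_{B_R} : v \in \Us\}$ is a finite-dimensional (hence closed) subspace of $W^{1,p}(B_R)$, because $\zz$ has compact support and only finitely many $\xi\in\L$ contribute; by equivalence of norms in finite dimensions the convergence is uniform on $B_R$, so $u_n(\xi) \to u(\xi) := U(\xi)$ pointwise and $U = \bar u$. Hence $[u_n] \to [u]$ in $\Ush^{1,p}$.

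\emph{Density.} Fix $p \in (1,\infty)$ and $[u] \in \Ush^{1,p}$. Choose a Lipschitz cutoff $\eta_R:\R^d\to[0,1]$ with $\eta_R \equiv 1$ on $B_R$, $\supp\eta_R \subset B_{2R}$, and $|\D\eta_R|\leq C/R$. With $A_R := B_{2R}\setminus B_R$ and $c_R := \mint_{A_R}\bar u\,\dd x$, define
\begin{displaymath}
  v_R(\xi) := \eta_R(\xi)\b(u(\xi) - c_R\b), \qquad \xi \in \L,
\end{displaymath}
which lies in $\Usz$ by compactness of support. Applying a discrete Leibniz rule to the finite differences of $v_R - u$, together with the norm equivalence \eqref{eq:interp_stab} and the Poincar\'e--Wirtinger inequality on $A_R$ (whose constant scales like $R$ by dilation), one obtains
\begin{displaymath}
  \|\D\bar u - \D\bar v_R\|_{L^p} \lesssim \|\D\bar u\|_{L^p(\R^d\setminus B_R)} + \|\D\bar u\|_{L^p(A_R')},
\end{displaymath}
where $A_R'$ is a slight thickening of $A_R$ by the support radius of $\zz$. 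Since $|\D\bar u|^p \in L^1(\R^d)$ and $p < \infty$, both tails vanish as $R\to\infty$.

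\emph{Main obstacle.} The density step. Transferring the continuous cutoff-and-average argument to the lattice requires a discrete product rule at the level of finite differences, because the interpolant of $\eta_R\cdot(u-c_R)$ is not the pointwise product of the corresponding interpolants. The Poincar\'e constant on the annulus must scale exactly as $R$ in order to absorb the $R^{-1}$ coming from $|\D\eta_R|$, leaving only the manageable tail $\|\D\bar u\|_{L^p(A_R')}$. The assumption $p<\infty$ is essential: for $p=\infty$ the tail $\|\D\bar u\|_{L^\infty(\R^d\setminus B_R)}$ need not vanish (e.g.\ $u(\xi)=\xi$ has constant nonzero gradient at every scale), so compactly supported approximations need not exist.
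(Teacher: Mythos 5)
Your argument is correct. The paper itself gives no proof of this proposition (it defers to the companion work \cite{OrShSu:2012}, and for the continuous analogue $\WWh^{m,2}$ to \cite{OrSu:homsob:2012}), so there is nothing to compare line by line; but your route is exactly the standard one those references use for homogeneous Sobolev spaces, transplanted to the lattice: completeness via a local Poincar\'e inequality after normalizing the free constant, combined with the observation that the interpolants restricted to a ball form a finite-dimensional (hence closed) subspace so that the $W^{1,p}_{\rm loc}$ limit is again an interpolant of a lattice function; density via a cutoff at scale $R$ with subtraction of the annulus average, the discrete Leibniz rule, and the Poincar\'e--Wirtinger inequality on the annulus with constant scaling like $R$ to absorb the $R^{-1}$ from the cutoff gradient. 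The two points you flag as delicate are the right ones, and your remark that the failure of density for $p=\infty$ is witnessed by $u(\xi)=\xi$ correctly explains the restriction $p<\infty$ in the statement.
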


% \begin{remark}[Far-field boundary condition]
%   \label{rem:farfield_bc}

\medskip \noindent
Let $u \in \Ush^{1,p}$, $p < \infty$, be a discrete displacement and
$y(\xi) := \mA \xi + u(\xi)$ be the associated deformation. It is
shown in \cite{OrShSu:2012} that $|u(\xi)| \ll |\xi|$ as $|\xi| \to
\infty$. Therefore, the discrete deformation satisfies the far-field
boundary condition
\begin{displaymath}
  % \label{eq:far_field_bc}
  y(\xi) = \mA \xi + o(|\xi|) \quad \text{as } |\xi| \to \infty.
\end{displaymath}
%\end{remark}

Aside from satisfying a boundary condition we also require that
deformations are injective. Since our main results do not cover
arbitrarily large deformations, we will circumvent the question of
injectivity by placing an $L^\infty$-bound on the displacement
gradient (this will be ensured through conditions on the external
forces). Notationally, we fix a constant $\inv > 0$, and define
\begin{equation}
  \label{eq:defn_Ba}
  \Ba := \b\{ u \in \Us \bsep |D_\rho u(\xi)| \leq \inv \text{ for all
    } \xi \in \L, \rho \in \Rg \b\}.
\end{equation}

If $\inv$ is chosen sufficiently small, then displacements belonging
to $\Ba$ give rise to injective deformations. Indeed, if $\kappa < \|
\mA^{-1} \|^{-1}$, where $\|\; \cdot \; \|$ denotes the $\ell^2$-operator norm, then
% then there exists $\mu = 1 -
%   \kappa \|\mA^{-1}\| > 0$ such that
\begin{equation}
  \label{eq:injective}
  \b| y(\xi) - y(\eta)\b| \geq \mu \b|\mA(\xi-\eta)\b| \qquad \forall \xi, \eta
  \in \L,
\end{equation}
where $\mu :=  1 - \kappa \|\mA^{-1}\| > 0$.
% \begin{proof}
%   The result follows from
%   \begin{align*}
%     |y(\xi) - y(\eta)| \geq\,& |\mA(\xi-\eta)| -
%     |u(\mA^{-1}\mA\xi)-u(\mA^{-1}\mA\eta)| \\
%     \geq\,& (1 - \|\D u\|_{L^\infty}\| \mA^{-1}\|) |\mA(\xi-\eta)|. \qedhere
%   \end{align*}
% \end{proof}

% As a matter of fact \eqref{eq:injective} is a more natural condition,
% that would still be reasonable when large deformations are
% admitted. In this this work, however, we shall enforce that all
% displacements belong to $\Ba$ and ensure invertibility by requiring
% throughout that $\inv < \| \mA^{-1} \|^{-1}$.

To conclude the discussion of discrete function spaces, we note that
we can define a smooth nodal interpolant on $\Ush^{1,p}$, which will
be useful in interpreting our results.

\begin{lemma}
  \label{th:interp:Itil}
  Let $u \in \Ush^{1,p}$, $p \in [1,\infty]$, then there exists $w \in
  \Ush^{1,p}$ such that $u = \tilw =: \Itil u$. Moreover,
  % there exist
  % constants $c_1', c_2' > 0$ such that
  \begin{equation}
    \label{eq:D_stab_Itil}
    \| \D \baru \|_{L^p} \lesssim \| \D \Itil u \|_{L^p} \lesssim \|
    \D \baru \|_{L^p} \qquad \forall u \in \Ush^{1,p}.
  \end{equation}
\end{lemma}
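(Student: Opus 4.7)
The plan is to interpret the identity $\tilw(\xi) = u(\xi)$ for all $\xi\in\L$ as a discrete convolution equation on the lattice and invert it by Fourier analysis on the torus $\mathbb{T}^d := [-\pi,\pi]^d$. By Fubini,
\[
  \tilw(\xi) \;=\; (\zz * \barw)(\xi) \;=\; \sum_{\eta \in \L} w(\eta)\,K(\xi-\eta), \qquad K(\xi) := (\zz*\zz)(\xi),\ \xi \in \L,
\]
so the task reduces to inverting the discrete convolution operator $K *_\L$. Since $\zz$ has compact support, $K$ is finitely supported on $\L$, and since $\zz$ is real and symmetric, Poisson summation gives
\[
  \widehat K(k) \;=\; \sum_{\xi\in\L} K(\xi)\,e^{-ik\cdot\xi} \;=\; \sum_{m\in 2\pi\Z^d} \big|\widehat\zz(k+m)\big|^2,
\]
which is non-negative and satisfies $\widehat K(0) \geq |\widehat\zz(0)|^2 = \big(\int_\Om \zz \dx\big)^2 = 1$ by the affine-reproduction assumption. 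Together with continuity and the standing assumptions on $\zz$ (supplemented, if necessary, by a concrete check for the chosen nodal basis), one obtains a uniform lower bound $\widehat K \geq c_0 > 0$ on $\mathbb{T}^d$.

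Because $\widehat K$ is a strictly positive trigonometric polynomial, $1/\widehat K$ extends analytically to a complex strip around the torus, hence its Fourier coefficients $G(\xi)$ decay exponentially. I would then define
\[
  w(\xi) \;:=\; \sum_{\eta \in \L} G(\xi - \eta)\,u(\eta),
\]
understood modulo constants (well-defined on equivalence classes since $G$ is summable). By construction $K *_\L w = u$ on $\L$, i.e.\ $\tilw(\xi) = u(\xi)$ for every $\xi \in \L$, which establishes the identity $u = \tilw =: \Itil u$.

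For \eqref{eq:D_stab_Itil}, I would commute finite differences through both convolutions,
\[
  \Da\rho u \;=\; K *_\L \Da\rho w, \qquad \Da\rho w \;=\; G *_\L \Da\rho u,
\]
and apply Young's inequality together with the $\ell^1$-summability of $K$ and $G$ to deduce $\|Du\|_{\ell^p} \asymp \|Dw\|_{\ell^p}$ uniformly in $p \in [1,\infty]$. Combining with Lemma~\ref{th:norm_equiv_Lp} and \eqref{eq:interp_stab} yields
\[
  \|\D\baru\|_{L^p} \;\asymp\; \|Du\|_{\ell^p} \;\asymp\; \|Dw\|_{\ell^p} \;\asymp\; \|\D\barw\|_{L^p} \;\asymp\; \|\D\tilw\|_{L^p} \;=\; \|\D\Itil u\|_{L^p},
\]
which is the desired bound, and also confirms $w \in \Ush^{1,p}$ whenever $u \in \Ush^{1,p}$.

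The main obstacle is the strict positivity of $\widehat K$ on the whole torus, in particular away from $k=0$. This does not follow from the abstract standing properties of $\zz$ (nonnegativity, symmetry, compact support, affine reproduction) alone and must be verified for each admissible nodal basis; for the Q1 tent and its natural analogues it is straightforward because $\widehat\zz$ is a product of positive sinc factors on $[-\pi,\pi]^d$. Once this lower bound is in hand, exponential decay of $G$ is automatic, and the surrounding convolution estimates and norm equivalences are routine.
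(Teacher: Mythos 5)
You have correctly identified the mechanism: $\tilw(\xi) = (K *_\L w)(\xi)$ with $K := (\zz*\zz)|_\L$ finitely supported, so the interpolation problem is a discrete convolution equation, and once $\widehat K \geq c_0 > 0$ on the torus the inverse kernel $G$ decays exponentially, $w := G*_\L u$ does the job, and the two-sided bound \eqref{eq:D_stab_Itil} follows from Young's inequality together with the (routine) equivalence of $\|\D\baru\|_{L^p}$ with the $\ell^p$-norms of nearest-neighbour differences. The paper itself gives no proof of this lemma --- it is quoted from the companion work \cite{OrShSu:2012} --- and your Fourier-inversion argument is the standard route for results of this type.

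The one point you flag as the ``main obstacle,'' namely strict positivity of $\widehat K$ away from $k=0$, is in fact a consequence of the standing assumptions: you omitted from your list the cardinal interpolation property $\zz(0)=1$, $\zz(\xi)=0$ for $\xi\in\L\setminus\{0\}$, which is assumed in \S\ref{sec:interp} and is exactly what closes the gap. Set $\Phi(y,k) := \sum_{\xi\in\L}\zz(y-\xi)\,e^{ik\cdot(y-\xi)}$, which is $\L$-periodic in $y$. Unfolding the defining sum over the unit cell and using the symmetry of $\zz$ gives
\begin{equation*}
  \widehat K(k) \;=\; \sum_{\xi\in\L}(\zz*\zz)(\xi)\,e^{-ik\cdot\xi}
  \;=\; \int_{[0,1]^d} \big|\Phi(y,k)\big|^2 \dy .
\end{equation*}
If this vanished for some $k_0$, then $\Phi(\cdot,k_0)$ would vanish almost everywhere, hence everywhere by continuity; but $\Phi(\eta,k_0) = \sum_{\mu\in\L}\zz(\mu)e^{ik_0\cdot\mu} = \zz(0) = 1$ at every lattice point $\eta$, a contradiction. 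Since $\widehat K$ is a trigonometric polynomial, strict positivity on the compact torus upgrades to a uniform lower bound $c_0>0$, so no case-by-case verification of the nodal basis is needed. (This cell-unfolding identity also sidesteps the convergence issues you would face in justifying the Poisson summation $\widehat K(k)=\sum_m|\widehat\zz(k+m)|^2$ for $d\geq 2$, since $\zz\in W^{1,\infty}$ with compact support only guarantees $\widehat\zz(k)=O(|k|^{-1})$.) With this addition your proof is complete and unconditional.
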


\subsection{Assumptions on the interaction potential}
\label{sec:intro:V}
\subsubsection{Energy difference}
\label{sec:intro:V:difference}
We assume throughout that $V(\bfO) = 0$. Physically, this condition
means that $\Ea$ is an energy difference between the deformed state
$y(\xi) = \mA\xi + u(\xi)$ and the reference state $y(\xi) = \mA \xi$,
which may have infinite energy.

\subsubsection{Smoothness}
\label{sec:intro:V:smoothness}
We assume that $V$ is ``smooth'' at injective configurations; more
precisely, we define $\DV_\inv := \{ \bfg \in \DV \sep \| \bfg
\|_{\DV} \leq \inv \}$, and assume that $V \in \CC^k(\DV_\inv)$, for
some $k \geq 2$ and for all $\kappa$ that are sufficiently small.

For $\bfg \in \DV_\inv$ and for any ``multi-index'' $\bfrho \in
\Rg^j, 1 \leq j \leq k$, the partial derivative
\begin{align*}
  V_\bfrho(\bfg) := \frac{\pp^j V(\bfg)}{\pp g_{\rho_1} \dots \pp
    g_{\rho_j}} \in \R^{d^j}
\end{align*}
exists; $V_\bfrho(\bfg)$ is understood as a multilinear form
acting on families of vectors $\bfh = (h_1, \dots, h_j)$;
$V_\bfrho(\bfg)[\bfh] = V_{\rho_1\cdots\rho_j}(\bfg)[h_1, \dots,
h_j]$.

We also define the associated partial derivatives of the site-energies
by $\Es_{\xi,\bfrho}(u) := V_\bfrho(Du(\xi))$, or,
\begin{displaymath}
  \Es_{\xi,\bfrho}(u)[D_\bfrho\bfv(\xi)] = \Es_{\xi,\rho_1\cdots\rho_j}[D_{\rho_1}
  v_1(\xi), \dots, D_{\rho_j} v_j(\xi)] = V_\bfrho(Du(\xi))[D_\bfrho\bfv(\xi)],
\end{displaymath}
where $D_\bfrho\bfv(\xi) = (D_{\rho_1} v_1(\xi), \dots, D_{\rho_j}
v_j(\xi))$ for $\rho_i \in \Rg, v_i \in \Ush$, $i=1, \ldots, j$.

\subsubsection{Symmetry}
\label{sec:intro:V:symmetry}
We assume throughout that $V$ satisfies the following inversion symmetry:
\begin{equation}
  \label{eq:intro:ptsymm}
  V\b( (-g_{-\rho})_{\rho \in \Rg} \b) = V(\bfg) \qquad \forall \bfg
  \in \DV.
\end{equation}
This condition is physically motivated by the fact that permutations
of atoms and isometries should not change the energy of a system.  The
requirement \eqref{eq:intro:ptsymm} then assumes that the global
energy was partitioned in a way that preserves this symmetry.

\begin{lemma}
  Let $\mF \in \R^{d \times d}$, $|\mF|\leq\inv$, then
  \begin{equation}
    \label{eq:intro:ptsymm_DV}
    V_{-\bfrho}(\mF\cdot\Rg) = (-1)^j V_{\bfrho}(\mF\cdot\Rg) \qquad
    \forall \bfrho \in \Rg^j, \quad 1 \leq j \leq k.
  \end{equation}
\end{lemma}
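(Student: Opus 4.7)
The plan is to recast the hypothesis \eqref{eq:intro:ptsymm} as the invariance of $V$ under a single linear involution of the stencil space $\DV$, and then differentiate.

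First I would introduce $\Phi : \DV \to \DV$ by $\Phi(\bfg)_\rho := -g_{-\rho}$. This is linear, bounded on $\DV$ (since $|\rho| = |-\rho|$), and satisfies $\Phi^2 = \id$, so $\Phi$ is its own derivative. The assumption \eqref{eq:intro:ptsymm} reads simply $V\circ\Phi = V$ on $\DV$. The second key observation is that affine stencils are fixed points of $\Phi$: for any $\mF \in \R^{d\times d}$, $\Phi(\mF\cdot\Rg)_\rho = -\mF(-\rho) = \mF\rho$, so $\Phi(\mF\cdot\Rg) = \mF\cdot\Rg$. Provided $|\mF| \leq \kappa$, this fixed point lies in $\DV_\kappa$, so $V$ is $C^k$ in a neighbourhood of it, and all derivatives below are well-defined.

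Next I would apply the chain rule $j$ times to $V = V\circ\Phi$ at the point $\mF\cdot\Rg$. Since $\Phi$ is linear with $D\Phi = \Phi$, the $j$-th Fr\'echet derivative yields, for arbitrary test stencils $\bfh^1, \dots, \bfh^j \in \DV$,
\begin{equation*}
  D^j V(\mF\cdot\Rg)\bigl[\bfh^1, \dots, \bfh^j\bigr]
  = D^j V(\mF\cdot\Rg)\bigl[\Phi(\bfh^1), \dots, \Phi(\bfh^j)\bigr].
\end{equation*}
Expanding both sides in coordinates, the left side equals $\sum_{\bfrho \in \Rg^j} V_\bfrho(\mF\cdot\Rg)[h^1_{\rho_1},\dots,h^j_{\rho_j}]$, while $\Phi(\bfh^i)_{\rho_i} = -h^i_{-\rho_i}$ gives $(-1)^j \sum_\bfrho V_\bfrho(\mF\cdot\Rg)[h^1_{-\rho_1},\dots,h^j_{-\rho_j}]$ on the right. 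Reindexing $\sigma_i := -\rho_i$ on the right and matching coefficients on both sides (the $\bfh^i$ being arbitrary), one obtains
\begin{equation*}
  V_\bfrho(\mF\cdot\Rg) = (-1)^j\, V_{-\bfrho}(\mF\cdot\Rg),
\end{equation*}
which is equivalent to \eqref{eq:intro:ptsymm_DV} since $(-1)^{2j} = 1$.

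No real obstacle is expected: the argument is a clean change of variables combined with the multilinear identification of the $j$-th Fr\'echet derivative. The only care required is the bookkeeping of the multi-index $\bfrho$ under the componentwise inversion $\rho_i \mapsto -\rho_i$, and checking that the neighbourhood on which $V$ is $C^k$ is preserved by $\Phi$; both are immediate from $|\Phi(\bfg)_\rho|/|\rho| = |g_{-\rho}|/|-\rho|$, so $\|\Phi(\bfg)\|_\DV = \|\bfg\|_\DV$.
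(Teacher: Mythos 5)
Your proof is correct and follows essentially the same route as the paper: both observe that $\mF\cdot\Rg$ is a fixed point of the stencil inversion and differentiate the identity \eqref{eq:intro:ptsymm} there; you merely make explicit (via the linear involution $\Phi$ and the chain rule) what the paper leaves as "evaluating the derivatives gives the stated result."
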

\begin{proof}
  Let $\bfg_\mF := \mF \cdot \Rg$, then $\bfg_\mF' := (-
  \mF(-\rho))_{\rho\in\Rg} = \bfg_\mF$. Since $V$ is $k$ times
  differentiable in $\DV_\inv$, we can differentiate
  \eqref{eq:intro:ptsymm} at $\bfg_\mF$. Evaluating
  the first and second derivatives gives the stated result.
\end{proof}

\subsubsection{Decay Hypothesis}
\label{sec:intro:V:decya}
Sufficiently rapid decay of the interatomic interaction is a crucial
ingredient in our analysis. We define the following basic bounds on
the interaction potential:
With this notation, we define the bounds
\begin{align*}
  m(\bfrho) :=\,& {\textstyle \prod_{i=1}^j |\rho_i|} \sup_{\bfg \in \DV_\inv} \|
  V_\bfrho(\bfg) \| \qquad
  \text{for } \bfrho \in \Rg^j, \quad 1 \leq j \leq k,
\end{align*}
where $\| \ell \| := \sup_{\bfh \in (\R^{d})^j, |h_1| = \dots = |h_j|
  = 1} \ell[h_1, \dots, h_j]$ for a $j$-linear form $\ell$. We shall
assume throughout that $V$ and $\kappa$ are such, that
\begin{equation}
  \label{eq:basic_decay}
  M^{(j)} := \sum_{\bfrho \in \Rg^j} m(\bfrho) < +\infty, \qquad \text{for } 1 \leq j \leq k,
\end{equation}
which will ensure that $\Ea$ is $k$ times Fr\'echet differentiable
(cf. Theorem \ref{th:defn_Ea}).
% In particular, this may require us to
% choose $\inv$ ``sufficiently small''.

In order to describe the class of admissible potentials, we also
discuss the decay assumption we will require in the static and dynamic
approximation error analysis. To simplify the following expressions,
we define $|\bfrho| := \sum_{i = 1}^j |\rho_i|$, for $\bfrho \in
\Rg^j$, $j \in \N$.

\def\Ms{M_{\rm s}}
\def\Md{M_{\rm d}}
\def\ms{m_{\rm s}}
\def\md{m_{\rm d}}

Let $p \in [1,\infty]$ and $2 \leq j \leq k$. In the static analysis,
we will assume finiteness of certain
\begin{align*}
  \Ms^{(j,p)} :=\,& \sum_{\bfrho\in\Rg^j} \ms^{(p)}(\bfrho), \quad \text{where}
  \\
  \ms^{(p)}(\bfrho) :=\,& m(\bfrho) |\bfrho|^2 \B( {\textstyle \sum_{i =
      2}^j \b(|\rho_1 \times \rho_i| + |\rho_1| + |\rho_i|\b) }\B)^{\smfrac{1}{(j-1)p}},
    \quad \text{for } \bfrho \in \Rg^j.
\end{align*}
In the dynamic analysis we will assume finiteness of certain constants
of the form
\begin{align*}
  \Md^{(j,p)} :=\,& \sum_{\bfrho\in\Rg^j} \md^{(p)}(\bfrho), \quad \text{where}
  \\
  \md^{(p)}(\bfrho) :=\,& \frac{m(\bfrho) |\bfrho|^3}{|\rho_1|} \B( {\textstyle \sum_{i =
      2}^j \b(|\rho_1 \times \rho_i| + |\rho_1| + |\rho_i|\b)} \B)^{\smfrac{1}{(j-1)p}},
    \quad \text{for } \bfrho \in \Rg^j.
\end{align*}
These constants naturally arise in the modeling error estimates
established in \S\ref{sec:stress:moderr} and \S\ref{sec:w:cons}.

We stress that, while finiteness of $M^{(j)}$, $1\leq j\leq k$, is a
standing assumption, we will assume finiteness of $\Ms^{(j,p)}$ and
$\Md^{(j,p)}$, for certain choices of $j$ and $p$, only when required.

\begin{remark}
  If $\Ea$ contains only pair interactions, then we can write $V$ in
  the form
  \begin{displaymath}
    V(\bfg) = \frac12 \sum_{\rho\in\Rg}\b[ \varphi(|g_\rho|) -
    \varphi(|\mA\rho|) \b],
  \end{displaymath}
  which clearly satisfies the symmetry \eqref{eq:intro:ptsymm}.
  Moreover, we show in \S\ref{sec:ex_lj} that, if $\varphi^{(j)}(r)
  \lesssim r^{-\alpha-j}$ for $r \geq 1$, $1 \leq j \leq k$, then
  \begin{displaymath}
    M^{(j)} \lesssim \sum_{\rho\in\Rg} |\rho|^{-\alpha}, \quad
    \text{and} \quad
    \Ms^{(j,2)} + \Md^{(j,2)} \lesssim \sum_{\rho\in\Rg} |\rho|^{5/2-\alpha}.
  \end{displaymath}
  It therefore follows that the constants $M^{(j)}$ are finite
  provided that $\alpha > d$ and that $\Ms^{(j,2)}, \Md^{(j,2)}$ are
  finite provided that $\alpha > d + 5/2$. In particular, this implies
  that the Lennard-Jones potential, $\varphi(r) = r^{-12} - 2 r^{-6}$,
  is included in our analysis.

  In \S\ref{sec:examples} we discuss other commonly employed
  potentials and show that they fall within our assumptions.
  % The main
  % example that is excluded is the Coulomb potential.
\end{remark}
% \begin{remark}
%   There is an obvious discrepancy between the requirements on a
%   potential $V$ for the energy $\Ea$ to be well-defined (i.e.,
%   $M^{(j)}$ is finite for $j = 1, 2$) and for the validity of the
%   approximation error analysis (where we require finiteness of
%   $\Ms^{(j,2)}$ for $j = 2,3$ and of $\Md^{(j,2)}$ for $j =
%   2,3,4$). This was also observed in the atomistic/continuum
%   multiscale analysis in \cite{OrtnerShapeev:2011pre}, and seems to be
%   a generic effect of the techniques employed in these analyses.
% \end{remark}

\subsection{Definition of the atomistic energy}
\label{sec:defn_Ea}
We mentioned at the beginning of \S\ref{sec:prelims} that, due to the
infinite domain and the infinite interaction range, the definition of
the energy \eqref{eq:defn_Ea_pre} is non-trivial. The purpose of this
section is to give \eqref{eq:defn_Ea_pre} a rigorous interpretation.

\begin{theorem}[Regularity of $\Ea$]
  \label{th:defn_Ea}
  (i) If $u \in \Usz \cap \Ba$, then $(\Es_\xi(u))_{\xi\in\L}
  \in \ell^1$; that is, $\Ea(u)$ given by \eqref{eq:defn_Ea_pre} is
  well-defined.

  (ii) $\Ea : (\Usz \cap \Ba, \|\;\cdot\;\|_{\Ush^{1,2}}) \to \R$ is
  continuous; that is, there exists a unique continuous extension to
  $\Ush^{1,2} \cap \Ba$, which we still denote by $\Ea$.

  (iii) $\Ea \in C^k(\Ush^{1,2} \cap \Ba)$, with
  \begin{align}
    % \label{eq:delEa_def1}
    % \<\del\Ea(u), v\> =\,& \sum_{\xi\in\L} \sum_{\rho\in\Rg} \b(
    % \Es_{\xi,\rho}(u) - \Es_{\xi,\rho}(0) \b) \cdot D_\rho v(\xi), \quad
    % \text{and} \\
    \label{eq:djEa}
    \del^j\Ea(u)[\bfv] =\,& \sum_{\xi\in\L}
    \sum_{\bfrho \in \Rg^j}
    \Es_{\xi,\bfrho}(u)[D_\bfrho \bfv(\xi)]
  \end{align}
  for $j = 1, \dots, k$, for all $u \in \Ush^{1,2}$ and $\bfv \in
  \Usz^j$.
  % for $\bfv \in (\Ush^{1,2})^j$.
  % , with $D_\bfrho
  % \bfv(\xi) = (D_{\rho_1}v_1(\xi), \dots, D_{\rho_j} v_j(\xi))$.
  %
  % If $v \in \Ush^{1,1}$, then
  % \begin{equation}
  %   \label{eq:delEa}
  %   \b\< \del\Ea(u), v \b\> = \sum_{\xi \in \L} \sum_{\rho \in \Rg}
  %   \Es_{\xi,\rho}(u) \cdot D_\rho v(\xi).
  % \end{equation}
  %
  Moreover, if $1 \leq j \leq k$ and
  $\sum_{i = 1}^j \smfrac{1}{p_i} = 1$, $p_i \in [1, \infty]$, then
  \begin{equation}
    \label{eq:bound_delj}
    \b|\del^j\Ea(u)[\bfv]\b| \lesssim M^{(j)} \prod_{i = 1}^j \|
    \D \barv_i \|_{L^{p_i}}.
  \end{equation}
\end{theorem}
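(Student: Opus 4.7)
The plan rests on one master estimate: for every $v \in \Us$, every $\rho \in \Rg$, and every $p \in [1,\infty]$,
\begin{equation*}
\| D_\rho v \|_{\ell^p} \lesssim |\rho|\, \|\D\barv\|_{L^p}.
\end{equation*}
I would prove this first by applying Jensen's inequality to the localization formula \eqref{eq:localisation_trick}: the kernel $\ww_{\xi,\rho}$ is nonnegative, has $\int \ww_{\xi,\rho}\dx = 1$, and satisfies $\sum_{\xi\in\L}\ww_{\xi,\rho}(x) = 1$ by the partition-of-unity property of $\zz$. Jensen then yields the pointwise bound $|D_\rho\tilv(\xi)|^p \le \int\ww_{\xi,\rho}|\D_\rho\barv|^p\dx$; summation in $\xi$ together with Fubini gives the estimate for $\tilv$, and Lemma~\ref{th:norm_equiv_Lp} transfers it to $v$.

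From this master estimate the bound \eqref{eq:bound_delj} is immediate. For $u \in \Usz\cap\Ba$ and $\bfv \in \Usz^j$, H\"older's inequality in $\xi$ yields
\begin{equation*}
\b|\sum_{\xi\in\L} \Es_{\xi,\bfrho}(u)[D_\bfrho\bfv(\xi)]\b| \le \|V_\bfrho\|_{L^\infty(\DV_\inv)} \prod_{i=1}^j \|D_{\rho_i} v_i\|_{\ell^{p_i}} \lesssim \frac{m(\bfrho)}{\prod_i |\rho_i|}\prod_i |\rho_i|\,\|\D\barv_i\|_{L^{p_i}},
\end{equation*}
and summation over $\bfrho \in \Rg^j$ recovers $M^{(j)}\prod_i\|\D\barv_i\|_{L^{p_i}}$; this estimate simultaneously certifies absolute convergence of the iterated sum $\sum_\xi\sum_\bfrho$ in \eqref{eq:djEa}.

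For part (i), I would Taylor-expand $V$ to second order at $\bfO$: $V(\bfg) = \sum_\rho V_\rho(\bfO)[g_\rho] + R(\bfg)$. Since $u \in \Usz$ has compact support, the linear sum $\sum_\xi\sum_\rho V_\rho(\bfO)[D_\rho u(\xi)]$ telescopes to zero, and the quadratic remainder is bounded via the $j=2$ version of the master estimate together with Cauchy--Schwarz by $\tfrac12 M^{(2)}\|\D\baru\|_{L^2}^2$, which gives $(\Es_\xi(u))_\xi \in \ell^1$. Part (ii) uses the same trick on $\Ea(u)-\Ea(u') = \int_0^1 \del\Ea(u_t)[u-u']\dt$ with $u_t := u'+t(u-u')$: subtract $V_\rho(\bfO)[D_\rho(u-u')(\xi)]$ from the integrand, which telescopes to zero because $u-u'\in\Usz$, and bound the resulting remainder via the $j=2$, $p_1=p_2=2$ version of the master estimate by $M^{(2)}\sup_t\|\D\overline{u_t}\|_{L^2}\|\D\overline{u-u'}\|_{L^2}$. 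This is Lipschitz continuity on bounded subsets of $\Usz\cap\Ba$ in the $\Ush^{1,2}$ norm, so Proposition~\ref{th:interp:energy-space} furnishes the unique continuous extension to $\Ush^{1,2}\cap\Ba$.

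Finally, part (iii) is an induction on $j$. The telescoping mechanism upgrades the linear form defined by the $j=1$ formula \eqref{eq:djEa} into a bounded functional on $\Ush^{1,2}$ (with norm $\lesssim M^{(2)}\|\D\baru\|_{L^2}$), so it extends continuously; a difference-quotient expansion identifies it as the Fr\'echet derivative $\del\Ea(u)$, the remainder $h^{-1}[\Ea(u+hv)-\Ea(u)]-\del\Ea(u)[v]$ being controlled by $h\, M^{(2)}\|\D\barv\|_{L^2}^2$ via the second-order Taylor formula. Iterating this argument and using \eqref{eq:bound_delj} with $j+1$ in place of $j$ to control each successive remainder produces $\del^j\Ea$ for $1\le j \le k$. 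The main obstacle throughout is bookkeeping: one must carefully justify every Fubini interchange and every exchange of limit with an infinite sum, and this works only because of the interplay between the absolute summability provided by the master estimate and the structural cancellation encoded in the telescoping identity $\sum_\xi D_\rho w(\xi) = 0$ for $w \in \Usz$.
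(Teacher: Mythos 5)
Your plan is correct and follows essentially the same route as the paper: the master estimate is the paper's Lemma~\ref{th:discrete_H1_norm} (localization formula plus Jensen), the telescoping identity $\sum_\xi D_\rho w(\xi)=0$ is Lemma~\ref{th:delEa0.eq.0}, and the second-order Taylor expansion with vanishing zeroth and first order terms is exactly how the paper defines and analyzes the modified energy $\hat\Ea$ before identifying it with $\Ea$ on $\Usz\cap\Ba$ and extending by density. The only slip is a citation: transferring the master estimate from $D_\rho\tilv$ to $D_\rho v$ uses Lemma~\ref{th:interp:Itil} (the existence of $w$ with $\tilw=v$ and the stability bound \eqref{eq:D_stab_Itil}), not the $L^p$ norm equivalence of Lemma~\ref{th:norm_equiv_Lp}.
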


\medskip \noindent The proof of Theorem \ref{th:defn_Ea} is
established throughout the remainder of the section. We begin by
establishing a simple bound on finite differences, which gives a first
glimpse of the localisation technique used at crucial steps throughout
the paper.

\begin{lemma}
  \label{th:discrete_H1_norm}
  Let $\rho \in \Rg$ and $p \in [1, \infty]$, then
  \begin{align*}
    \| D_\rho \tilu \|_{\ell^p} \leq \| \D_\rho \baru \|_{L^p} \leq
    |\rho| \| \D \baru \|_{L^p} \qquad \forall u \in \Ush^{1,p}.
    % OLD version with \Itil u >>> see if we need it?
    % \sum_{\xi \in \L} \b| \Da{\rho} u(\xi) \b|^2 \leq\,&
    % C \| \Dc{\rho} \Itil u \|_{\LL^2}^2 \leq C |\rho|^2 \|\D\Itil u
    % \|_{\LL^2}^2
    % \qquad \forall u \in \Ush^{1,2}.
 \end{align*}
\end{lemma}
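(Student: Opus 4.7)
The plan is to prove the two inequalities separately; the second is entirely elementary and the first is a direct application of the localization formula \eqref{eq:localisation_trick} together with Jensen's inequality.

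For the second inequality, observe that pointwise $|\nabla_\rho \bar u(x)| = |\nabla \bar u(x)\cdot\rho| \leq |\rho|\,|\nabla \bar u(x)|$, so taking $L^p$-norms immediately yields $\|\nabla_\rho \bar u\|_{L^p}\leq |\rho|\,\|\nabla \bar u\|_{L^p}$.

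For the first inequality, I would use the localization identity \eqref{eq:localisation_trick},
\begin{equation*}
  D_\rho \tilde u(\xi) = \int_{\R^d} \omega_{\xi,\rho}(x)\, \nabla_\rho \bar u(x)\dx,
  \qquad \omega_{\xi,\rho}(x) = \int_0^1 \zeta(\xi+t\rho-x)\dt.
\end{equation*}
The key observation is that $\omega_{\xi,\rho}$ is a probability density on $\R^d$: nonnegativity follows from $\zeta\geq 0$, and integrating in $x$ and using $\int_{\R^d}\zeta\dx = 1$ (from the affine reproduction hypothesis, as noted after \eqref{eq:interp:S1_interp}) gives $\int_{\R^d}\omega_{\xi,\rho}\dx = 1$. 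Moreover, summing over $\xi\in\L$ and again using the affine reproduction property $\sum_{\xi\in\L} \zeta(\xi+t\rho-x) = 1$ for each $t$, we obtain the dual partition-of-unity identity $\sum_{\xi\in\L}\omega_{\xi,\rho}(x) = 1$ for a.e.\ $x$.

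Given these two facts, the first inequality drops out. For $p\in[1,\infty)$, Jensen's inequality applied to the probability measure $\omega_{\xi,\rho}(x)\dx$ gives
\begin{equation*}
  |D_\rho \tilde u(\xi)|^p \leq \int_{\R^d} \omega_{\xi,\rho}(x)\,|\nabla_\rho \bar u(x)|^p\dx,
\end{equation*}
and summing over $\xi\in\L$, by Tonelli and the partition-of-unity identity, yields $\|D_\rho\tilde u\|_{\ell^p}^p \leq \|\nabla_\rho\bar u\|_{L^p}^p$. For $p=\infty$, use $|D_\rho\tilde u(\xi)|\leq\|\omega_{\xi,\rho}\|_{L^1}\|\nabla_\rho\bar u\|_{L^\infty} = \|\nabla_\rho\bar u\|_{L^\infty}$. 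I do not foresee any serious obstacle; the only subtle point is verifying the two partition-of-unity identities for $\omega_{\xi,\rho}$, but both are immediate consequences of the standing hypotheses on $\zeta$.
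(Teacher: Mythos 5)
Your proposal is correct and follows essentially the same route as the paper: the localisation formula \eqref{eq:localisation_trick}, Jensen's inequality with respect to the probability measure induced by $\zz$, and the partition-of-unity identity $\sum_{\xi\in\L}\zz(\xi+t\rho-x)=1$ to collapse the sum over $\xi$. The only cosmetic difference is that you first integrate out $t$ to treat $\ww_{\xi,\rho}(x)\dx$ as the probability measure, whereas the paper applies Jensen directly to $\zz(\xi+t\rho-x)\dx\dt$ on $[0,1]\times\R^d$; these are the same argument.
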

\begin{proof}
  The case $p = \infty$ is trivial, hence suppose that $p <
  \infty$. Using the localisation formula
  \eqref{eq:localisation_trick}, the fact that $\{ \zz(\;\cdot\;-\xi)
  \sep \xi \in \L \}$ is a partition of unity (and hence $\int_\Om
  \zz(x-\xi) \dx = 1$), we can estimate
  \begin{align*}
    \sum_{\xi\in\L} \b|\Da{\rho}\tilu(\xi)\b|^p
    =\,& \sum_{\xi}\B| \int_{t = 0}^1 \int_\Om
    \zz(\xi+t\rho - x) \Dc{\rho} \baru(x) \dx\dt \B|^p \\
    \leq\,& \sum_{\xi} \int_{t = 0}^1 \int_\Om
    \zz(\xi+t\rho - x) \b| \Dc{\rho}\baru(x)\b|^p \dx\dt \\
    =\,&  \int_\Om
    \b| \Dc{\rho}\baru(x)\b|^p \dx \leq |\rho|^p \| \D
    \baru\|_{\LL^2}^p. \qedhere
 \end{align*}
 % The second statement is an immediate consequence of
 % \eqref{eq:bounds_Du_nodal_interp}.
\end{proof}

% \begin{lemma}
%   \label{th:site_ens}
%   Let $p \in [1,\infty]$, $v \in \Ush^{1,p}$ and $\xi \in \L$, then
%   $\| D v(\xi) \|_\DV \lesssim \| \D\barv \|_{L^p}$.

%   In particular, $\Es_\xi \in C^k(\Ush^{1,p}\cap\Ba)$ with
%   \begin{align*}
%     \<\del^j \Es_\xi(u), \bfv\> = \sum_{\bfrho\in\Rg^j}
%     \Es_{\xi,\bfrho}(u) [D_\bfrho \bfv(\xi)]
%   \end{align*}
%   for $u \in \Ush^{1,p} \cap \Ba$ and $\bfv \in (\Ush^{1,p})^j, j = 1,
%   \dots, k$.
% \end{lemma}
% \begin{proof}[Proof of Lemma \ref{th:site_ens}]
%   Let $u, v \in \Ush^{1,p}$, $p \in [1,\infty]$, $\tilv = u$
%   (cf. Lemma \ref{th:interp:Itil}), and let $\rho\in\Rg$. It is easy
%   to see that $\| \ww_{\xi,\rho} \|_{L^{p'}} \lesssim 1$. An
%   application of \eqref{eq:localisation_trick}, H\"{o}lder's
%   inequality, and \eqref{eq:D_stab_Itil}, yield
%   \begin{displaymath}
%     |D_\rho u(\xi)| \leq \| \ww_{\xi,\rho} \|_{L^{p'}} \| \D_\rho \barv \|_{L^{p}}
%     \lesssim \| \D_\rho \baru \|_{L^p} \leq |\rho| \| \D \baru \|_{L^p},
%   \end{displaymath}
%   from which we deduce that $\| Du(\xi) \|_{\DV} \lesssim \| \D\baru
%   \|_{L^p}$.

%   In particular, it follows that the map $u \mapsto Du(\xi)$,
%   $\Ush^{1,p} \to \DV$ is bounded. Since $Du(\xi) \in \DV_\inv$ for $u
%   \in \Ush^{1,p}\cap\Ba$, it follows that $\Es_\xi \in
%   C^k(\Ush^{1,p})\cap\Ba$.

%   The stated representation of the first variation is straightforward
%   to establish.
% \end{proof}

\begin{lemma}
  \label{th:variation_bounds}
  Let $\bfg_\xi \in \DV_\inv$, $\xi \in \L$, and $v_i \in \Ush^{1,p_i}, 1
  \leq i \leq j \leq k$, with $\sum_{i = 1}^j \smfrac{1}{p_i} = 1$,
  then \\[-5mm]
  % $(V_{\bfrho}(\bfg_\xi) [D_\bfrho \bfv(\xi)])_{\substack{\xi\in\L\\
  %     \bfrho\in\Rg^j}} \in
  % \ell^1(\L \times \Rg^j)$, with\\[-5mm]
  \begin{displaymath}
    % \hspace{1.5cm}
    \sum_{\xi \in \L} \sum_{\bfrho\in\Rg^j} \b|V_{\bfrho}(\bfg_\xi)
    [D_\bfrho \bfv(\xi)]\b| \lesssim M^{(j)} \prod_{i=1}^j \| \D \barv_i \|_{L^{p_i}}.
  \end{displaymath}
  % where $M_\mu^{(j)}$ is defined in \eqref{eq:basic_decay}.
\end{lemma}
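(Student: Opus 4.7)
The plan is to reduce the sum to a product of norms by combining three ingredients: (i) the pointwise multilinear bound on $V_\bfrho(\bfg_\xi)$ coming directly from the definition of $m(\bfrho)$ in \eqref{eq:basic_decay}, (ii) the generalised H\"older inequality in $\xi$, and (iii) a discrete Sobolev-type bound $\| D_\rho v \|_{\ell^p} \lesssim |\rho| \|\D\barv\|_{L^p}$.

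For fixed $\xi$ and $\bfrho$, multilinearity of $V_\bfrho(\bfg_\xi)$ together with $\bfg_\xi \in \DV_\inv$ gives
\begin{displaymath}
  \b|V_\bfrho(\bfg_\xi)[D_\bfrho \bfv(\xi)]\b|
  \leq \|V_\bfrho(\bfg_\xi)\| \prod_{i=1}^j |D_{\rho_i} v_i(\xi)|
  \leq \frac{m(\bfrho)}{\prod_{i=1}^j |\rho_i|} \prod_{i=1}^j |D_{\rho_i} v_i(\xi)|.
\end{displaymath}
Summing over $\xi \in \L$ and applying the generalised H\"older inequality with exponents $p_1,\dots,p_j$ satisfying $\sum_i 1/p_i = 1$ yields
\begin{displaymath}
  \sum_{\xi\in\L} \b|V_\bfrho(\bfg_\xi)[D_\bfrho \bfv(\xi)]\b|
  \leq \frac{m(\bfrho)}{\prod_i |\rho_i|} \prod_{i=1}^j \|D_{\rho_i} v_i\|_{\ell^{p_i}}.
\end{displaymath}

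The remaining, and only nontrivial, step is the estimate $\|D_{\rho_i} v_i\|_{\ell^{p_i}} \lesssim |\rho_i|\,\|\D\barv_i\|_{L^{p_i}}$, which is an analogue of Lemma~\ref{th:discrete_H1_norm} for the raw lattice function $v_i$ instead of the quasi-interpolant $\tilv_i$. I would establish it via the identity $\overline{D_\rho v}(x) = \barv(x+\rho) - \barv(x)$, which follows from the translation equivariance of the interpolation operator \eqref{eq:interp:S1_interp}. Since $\barv \in W^{1,\infty}_\loc$ by Lemma~\ref{th:interp:basic_prop}, the fundamental theorem of calculus along the segment gives $\overline{D_\rho v}(x) = \int_0^1 \D_\rho \barv(x+t\rho)\,\dt$, and Minkowski's integral inequality then delivers $\|\overline{D_\rho v}\|_{L^p} \leq |\rho|\,\|\D\barv\|_{L^p}$. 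Combined with the $\ell^p$--$L^p$ equivalence \eqref{eq:interp:norm_equiv_Lp}, this yields the desired inequality.

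Cancelling $\prod_i |\rho_i|$ in the numerator against the factors produced by the Sobolev-type estimate, and then summing over $\bfrho \in \Rg^j$, produces $M^{(j)} \prod_i \|\D\barv_i\|_{L^{p_i}}$ on the right, which is the claim. No step is genuinely difficult; the only piece of content beyond H\"older and the definition of $m(\bfrho)$ is the discrete-to-continuous inequality, which is essentially the same localisation argument used in the proof of Lemma~\ref{th:discrete_H1_norm}, adapted to $v$ rather than $\tilv$.
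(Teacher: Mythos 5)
Your proof is correct, and it follows the same skeleton as ours: the pointwise multilinear bound $|V_\bfrho(\bfg_\xi)[D_\bfrho\bfv(\xi)]| \leq m(\bfrho)\prod_i|\rho_i|^{-1}\prod_i|D_{\rho_i}v_i(\xi)|$, generalised H\"older over $\xi$, and a finite-difference-to-gradient bound, after which the factors $\prod_i|\rho_i|$ cancel and the sum over $\bfrho$ produces $M^{(j)}$. The one place where you genuinely diverge is the key sub-estimate $\|D_\rho v\|_{\ell^p}\lesssim|\rho|\,\|\D\barv\|_{L^p}$ for the raw lattice function $v$. We obtain it by invoking Lemma~\ref{th:interp:Itil} to produce $w_i$ with $\tilw_i=\Itil v_i$ interpolating $v_i$ at the lattice points, so that $D_{\rho}v_i(\xi)=D_\rho\tilw_i(\xi)$, and then applying Lemma~\ref{th:discrete_H1_norm} to $w_i$ together with the stability bound \eqref{eq:D_stab_Itil}. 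You instead use the translation identity $\ol{D_\rho v}(x)=\barv(x+\rho)-\barv(x)$ (valid since $\rho\in\Z^d$, by reindexing the sum in \eqref{eq:interp:S1_interp}), the fundamental theorem of calculus along the segment, Minkowski's integral inequality, and the norm equivalence \eqref{eq:interp:norm_equiv_Lp} applied to the lattice function $D_\rho v$. This is a correct and in fact more elementary route: it bypasses the deconvolution result of Lemma~\ref{th:interp:Itil} entirely and only uses the first-order interpolant, at the cost of not reusing Lemma~\ref{th:discrete_H1_norm} verbatim. Both arguments give constants independent of $p$, as required.
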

\begin{proof}
  Let $w_i \in \Ush^{1,p_i}$ such that $\tilw_i = \Itil v_i$, then
  \begin{displaymath}
     \b|V_{\bfrho}(\bfg_\xi)
    [D_\bfrho \bfv(\xi)] \b| \leq \smfrac{m(\bfrho)}{\prod_{i=1}^j|\rho_i|} \prod_{i = 1}^j
    |D_{\rho_i} \tilw_i(\xi)|.
  \end{displaymath}
  Summing over $\xi \in \L, \bfrho \in \Rg^j$, applying the
  generalized H\"{o}lder inequality, followed by
  Lemma~\ref{th:discrete_H1_norm}, yields
  \begin{align*}
    \sum_{\bfrho \in \Rg^j} \sum_{\xi\in\L}\b|V_{\bfrho}(\bfg_\xi)
    [D_\bfrho \bfv(\xi)] \b| \leq\,& \sum_{\bfrho \in \Rg^j}
   \smfrac{m(\bfrho)}{\prod_{i=1}^j|\rho_i|} \sum_{\xi\in\L} \prod_{i = 1}^j
    |D_{\rho_i} \tilw_i(\xi)| \\
    \leq\,& \sum_{\bfrho \in \Rg^j}
   \smfrac{m(\bfrho)}{\prod_{i=1}^j|\rho_i|} \prod_{i = 1}^j \| D_\rho \tilw_i \|_{\ell^{p_i}}
    \leq \sum_{\bfrho \in \Rg^j} m(\bfrho) \|
    \D\barw_i \|_{L^p}.
  \end{align*}
  Applying \eqref{eq:D_stab_Itil} yields the stated result.
\end{proof}

\begin{remark}
  Lemma \ref{th:variation_bounds} implies, in particular, that we can
  interchange the summation in series of the form $\sum_{\xi\in\L}
  \sum_{\bfrho \in \Rg^j} V_\bfrho(\bfg_\xi)[D_\bfrho \bfv(\xi)]$,
  provided that $\bfg_\xi \in \DV_\inv$ and $\D v_i$ have sufficient
  integrability. We will henceforth perform interchanges of summations
  without further comment.
\end{remark}

\begin{lemma}
  \label{th:delEa0.eq.0}
  If $u \in \Usz$, then
  \begin{equation}
    \label{eq:delEa0.eq.0}
    \sum_{\xi\in\L}\sum_{\rho\in\Rg} \Es_{\xi,\rho}(0) \cdot D_\rho
    u(\xi) = 0.
  \end{equation}
\end{lemma}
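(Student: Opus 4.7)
The plan is to exploit the fact that $\Es_{\xi,\rho}(0) = V_\rho(\bfO)$ is a constant independent of $\xi$, interchange the order of summation, and then observe that the inner sum $\sum_{\xi} D_\rho u(\xi)$ telescopes to zero by compact support of $u$.

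First, since $Du(\xi) \equiv \bfO$ when $u \equiv 0$, we have $\Es_{\xi,\rho}(0) = V_\rho(\bfO)$, independent of $\xi$, and the left-hand side of \eqref{eq:delEa0.eq.0} becomes $\sum_{\xi} \sum_{\rho} V_\rho(\bfO) \cdot D_\rho u(\xi)$. Next I would verify absolute convergence, so that Fubini applies: for each $\rho \in \Rg$, the set $\supp u$ is finite, so $D_\rho u(\xi) = u(\xi+\rho) - u(\xi)$ is nonzero for at most $2|\supp u|$ values of $\xi$ and is bounded by $2\|u\|_{\ell^\infty}$. Combined with $|V_\rho(\bfO)| \leq m(\rho)/|\rho| \leq m(\rho)$ and the standing hypothesis $M^{(1)} = \sum_{\rho\in\Rg} m(\rho) < \infty$ from \eqref{eq:basic_decay}, the double series converges absolutely (this is essentially Lemma~\ref{th:variation_bounds} applied with $j = 1$, taking $\bfg_\xi \equiv \bfO$). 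Once the order of summation is swapped, the inner sum reads
\[
  \sum_{\xi\in\L} D_\rho u(\xi) = \sum_{\xi\in\L} u(\xi+\rho) - \sum_{\xi\in\L} u(\xi) = 0,
\]
where the reindexing $\xi \mapsto \xi - \rho$ in the first sum is legal because both sums are finite (each a sum over a translate of $\supp u$). This yields the identity.

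There is no real obstacle beyond ensuring that the interchange of summations is permitted, and this is immediate from the compact-support hypothesis on $u$ together with $M^{(1)} < \infty$. An alternative route would pair each $\rho$ with $-\rho$, using the inversion symmetry $V_{-\rho}(\bfO) = -V_\rho(\bfO)$ from \eqref{eq:intro:ptsymm_DV} and the identity $D_{-\rho} u(\xi+\rho) = -D_\rho u(\xi)$, but the telescoping argument above is shorter and avoids invoking \eqref{eq:intro:ptsymm_DV}.
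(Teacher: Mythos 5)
Your proof is correct and follows essentially the same route as the paper: identify $\Es_{\xi,\rho}(0)=V_\rho(\bfO)$ as independent of $\xi$, justify the interchange of summation via Lemma~\ref{th:variation_bounds} (absolute convergence from $M^{(1)}<\infty$ and the compact support of $u$), and observe that $\sum_{\xi\in\L} D_\rho u(\xi)=0$ by telescoping. The paper's proof is just a terser version of the same argument.
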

\begin{proof}
  According to Lemma~\ref{th:variation_bounds}, the sum on the
  right-hand side of \eqref{eq:delEa0.eq.0} is well-defined, the
  summand belonging to $\ell^1$.

  Since $\Es_{\xi,\rho}(0) = V_{\rho}(\bfO)$, and interchanging the order of
  summation, we obtain
  \begin{displaymath}
    \sum_{\xi\in\L}\sum_{\rho\in\Rg} \Es_{\xi,\rho}(\bfO) \cdot D_\rho
    u(\xi) = \sum_{\rho \in \Rg} V_{\rho}(\bfO) \sum_{\xi\in\L}
    D_\rho u(\xi).
  \end{displaymath}
  For $u \in \Usz$ the right-hand side clearly vanishes.
\end{proof}

Motivated by the previous lemma, we define
\begin{equation}
  \label{eq:defn_Ea_alt}
  \hat{\Ea}(u) := \sum_{\xi \in \L} \hat{\Es}_\xi(u), \qquad \text{where
  } \hat{\Es}_\xi(u) := \Es_\xi(u) - \sum_{\rho\in\Rg}
  \Es_{\xi,\rho}(0)\cdot D_\rho u(\xi),
\end{equation}
then we immediately obtain the following result.

\begin{lemma}
  \label{th:hatEs}
  $\hat{\Es}_\xi \in C^k(\Ush^{1,p}\cap \Ba)$, for all $p \in [1,
  \infty]$, with
  \begin{align*}
    \hat{\Es}_{\xi,\rho}(u) =\,& \Es_{\xi,\rho}(u) - \Es_{\xi,\rho}(0), \quad
    \text{ for } \rho \in \Rg, \quad  \text{and} \\
    \hat{\Es}_{\xi,\bfrho}(u) =\,& \Es_{\xi,\bfrho}(u), \quad \text{ for }
    \bfrho \in \Rg^j, 2 \leq j \leq k.
  \end{align*}
\end{lemma}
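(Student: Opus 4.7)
The plan is to realize $\Es_\xi$ as a composition $V \circ L_\xi$, where $L_\xi \colon u \mapsto Du(\xi)$ is a bounded linear map from $\Ush^{1,p}$ into $\DV$, and then apply the chain rule. Since $V \in C^k(\DV_\inv)$ by our smoothness hypothesis in \S\ref{sec:intro:V:smoothness}, and any $u \in \Ba$ satisfies $\|Du(\xi)\|_{\DV} = \max_{\rho} |D_\rho u(\xi)|/|\rho| \leq \inv$ by the very definition \eqref{eq:defn_Ba} of $\Ba$, so that $L_\xi(\Ush^{1,p}\cap\Ba)\subset \DV_\inv$, the composition will be $C^k$ on $\Ush^{1,p}\cap \Ba$ (in the sense that it extends $C^k$-smoothly to an open neighbourhood in $\Ush^{1,p}$).

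The first step is to check boundedness of $L_\xi$. For any lattice site $\xi$, direction $\rho\in\Rg$, and $u\in\Ush^{1,p}$, the identity $D_\rho u(\xi) = \baru(\xi+\rho)-\baru(\xi) = \int_0^1 \D_\rho\baru(\xi+t\rho)\,\dt$ combined with the embedding $\|\D\baru\|_{L^\infty}\lesssim \|\D\baru\|_{L^p}$ (from Lemma~\ref{th:embedding} together with \eqref{eq:interp_stab}) yields $|D_\rho u(\xi)|\leq |\rho|\,\|\D\baru\|_{L^\infty}\lesssim |\rho|\,\|\D\baru\|_{L^p}$. Hence $\|L_\xi u\|_{\DV}\lesssim \|\D\baru\|_{L^p}$, as required.

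Once $\Es_\xi=V\circ L_\xi$ is known to be $C^k$, the chain rule identifies its $j$-th variation with $\del^j\Es_\xi(u)[\bfv] = \sum_{\bfrho\in\Rg^j} V_\bfrho(Du(\xi))[D_\bfrho \bfv(\xi)]$, and the sum on the right converges absolutely: applying the pointwise bound $|D_{\rho_i}v_i(\xi)|\leq |\rho_i|\,\|\D\barv_i\|_{L^\infty}\lesssim |\rho_i|\,\|\D\barv_i\|_{L^{p_i}}$ term by term gives a dominating series $\lesssim M^{(j)}\prod_i \|\D\barv_i\|_{L^{p_i}}$ by \eqref{eq:basic_decay}. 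This immediately identifies $\Es_{\xi,\bfrho}(u)=V_\bfrho(Du(\xi))$ as claimed in \S\ref{sec:intro:V:smoothness}.

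Finally, to pass from $\Es_\xi$ to $\hat\Es_\xi$, note that the correction term $\sum_{\rho\in\Rg}\Es_{\xi,\rho}(0)\cdot D_\rho u(\xi)$ is a bounded linear functional of $u\in\Ush^{1,p}$ (by the same argument used above, applied to $\bfg_\xi\equiv \bfO$ and $j=1$), hence $C^\infty$ with first variation $\rho\mapsto \Es_{\xi,\rho}(0)$ and all higher variations vanishing. Subtracting therefore preserves the $C^k$ regularity and produces the derivative formulae $\hat\Es_{\xi,\rho}(u)=\Es_{\xi,\rho}(u)-\Es_{\xi,\rho}(0)$ and $\hat\Es_{\xi,\bfrho}(u)=\Es_{\xi,\bfrho}(u)$ for $|\bfrho|\geq 2$. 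There is no real obstacle here; the only subtlety worth flagging is the verification that $L_\xi$ is continuous into $\DV$ (not merely into, say, $\R^d$ pointwise), which is where the embedding Lemma~\ref{th:embedding} does the essential work.
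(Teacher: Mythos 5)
Your proposal is correct and follows essentially the same route as the paper's (much terser) proof: the paper simply invokes the fact that $\Es_\xi \in C^k(\Ush^{1,p}\cap\Ba)$ and bounds the linear correction term by $M^{(1)}\|\D\baru\|_{L^p}$, which is exactly your final step. Your additional verification that $u\mapsto Du(\xi)$ is bounded from $\Ush^{1,p}$ into $\DV$ via the embedding $\|\D\baru\|_{L^\infty}\lesssim\|\D\baru\|_{L^p}$ is the detail the paper leaves implicit, and it is carried out correctly.
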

\begin{proof}
  This result follows immediately from the fact that $\Es_\xi \in
  C^k(\Ush^{1,p} \cap \Ba)$ and the estimate
  \begin{displaymath}
    \sum_{\rho\in\Rg}
    |\Es_{\xi,\rho}(0)|\, |D_\rho u(\xi)| \lesssim \sum_{\rho\in\Rg}
    |\rho| m(\rho) \| \D u \|_{L^p} = M^{(1)} \| \D u \|_{L^p}. \qedhere
  \end{displaymath}
\end{proof}

\begin{lemma}
  \label{th:Ea_alt}
  Let $u \in \Ush^{1,2} \cap \Ba$, then $\hat{\Es}_\xi(u) \in \ell^1$;
  that is, \eqref{eq:defn_Ea_alt} is well-defined for all $u \in
  \Ush^{1,2}\cap\Ba$.  Moreover, $\hat{\Ea} \in
  C^k(\Ush^{1,2}\cap\Ba)$ with variations given by \eqref{eq:djEa}
  (with $\Ea$ replaced by $\hat\Ea$).
\end{lemma}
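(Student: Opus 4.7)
The strategy is to observe that $\hat{\Es}_\xi(u)$ is precisely the second-order Taylor remainder of $u \mapsto \Es_\xi(u)$ expanded about the reference state $u = 0$ (for which $V(\bfO) = 0$). Lemma \ref{th:variation_bounds} applied at level $j = 2$ with $p_1 = p_2 = 2$ then furnishes both the $\ell^1$-summability for (i) and, via higher-order expansion, the $C^k$-regularity bounds for (iii).

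For (i), since $0 \in \Ba$ and $\Ba$ is star-shaped about $0$, the path $t \mapsto tu$ lies in $\Ba$ for all $t \in [0,1]$. Taylor's theorem with integral remainder applied to $\varphi(t) := \Es_\xi(tu)$, together with $V(\bfO) = 0$, yields
\begin{equation*}
\hat{\Es}_\xi(u) = \int_0^1 (1-t) \sum_{\rho,\sigma \in \Rg} \Es_{\xi,\rho\sigma}(tu)\bigl[D_\rho u(\xi), D_\sigma u(\xi)\bigr] \dt.
\end{equation*}
Summing over $\xi \in \L$ and interchanging sum with integration (justified by Lemma \ref{th:variation_bounds} applied pointwise in $t$ with $\bfg_\xi = tDu(\xi) \in \DV_\inv$) yields $\sum_\xi |\hat{\Es}_\xi(u)| \lesssim M^{(2)} \|\D \baru\|_{L^2}^2$, proving (i) and the continuity statement of (ii).

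For (iii), I would verify the formula \eqref{eq:djEa} (with $\hat\Ea$) directly. By Lemma \ref{th:hatEs}, the candidate $j$-th variation $\sum_\xi \sum_{\bfrho \in \Rg^j} \hat{\Es}_{\xi,\bfrho}(u)[D_\bfrho \bfv(\xi)]$ coincides for $j \geq 2$ with the corresponding sum involving $\Es_{\xi,\bfrho}$, so Lemma \ref{th:variation_bounds} gives absolute convergence and the bound \eqref{eq:bound_delj} directly; for $j = 1$, the further Taylor expansion $\hat{\Es}_{\xi,\rho}(u) = \int_0^1 \sum_\sigma \Es_{\xi,\rho\sigma}(tu)[D_\sigma u(\xi)]\dt$ reduces the bound to Lemma \ref{th:variation_bounds} at level $2$. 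To promote these forms to Fr\'echet derivatives, I would Taylor-expand $\hat{\Es}_\xi(u+v)$ to order $k$ about $u$, sum over $\xi$, and control the order-$k$ remainder by Lemma \ref{th:variation_bounds} at level $k$, placing one factor in $\ell^\infty$ via the embedding $\|\D \barv\|_{L^\infty} \lesssim \|\D \barv\|_{L^2}$ from Lemma \ref{th:embedding} (available because $d \leq 3$) and the remaining factors in $\ell^2$. Continuity of the $j$-th variation in $u$ then follows from continuity of $V_\bfrho$ on $\DV_\inv$ and dominated convergence against the summable majorant supplied by Lemma \ref{th:variation_bounds}.

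The main obstacle is ensuring that $u + v$ remains inside $\Ba$ for all $v$ sufficiently small in $\Ush^{1,2}$, so that all the site-Taylor expansions are legitimate; this is precisely where the non-trivial embedding of Lemma \ref{th:embedding} is essential, as it converts $\Ush^{1,2}$-smallness into $\ell^\infty$-smallness of finite differences. The remaining bookkeeping --- pairing each factor $D_\rho v(\xi)$ appearing in a Taylor remainder with the right $\ell^{p_i}$-norm for the generalized H\"older inequality inside Lemma \ref{th:variation_bounds} --- is routine once the standing decay assumption $M^{(j)} < \infty$, $1 \leq j \leq k$, is invoked.
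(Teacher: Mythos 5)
Your proof is correct and follows essentially the same route as the paper: you identify $\hat\Es_\xi(u)$ as the second-order Taylor remainder of $\Es_\xi$ about $0$ (the paper uses the Lagrange form of the remainder, you the integral form) and invoke Lemma~\ref{th:variation_bounds} at level $j=2$ with $p_1=p_2=2$ for the $\ell^1$-summability, treating the higher variations analogously, which is exactly what the paper does (and in more detail than the paper, which dismisses part (iii) as ``analogous arguments''). One small correction: the inverse estimate $\|\D\barv\|_{L^\infty}\lesssim\|\D\barv\|_{L^2}$ of Lemma~\ref{th:embedding} follows from the unit mesh size of $\Z^d$ (i.e.\ $\ell^2\subset\ell^\infty$), not from the restriction $d\leq 3$, so no dimensional hypothesis is needed at that step.
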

\begin{proof}
  Fix $u \in \Ush^{1,2}\cap\Ba$, then $\theta Du(\xi) \in \DV_\inv$
  for all $\xi \in \L$ and $\theta \in [0, 1]$. Hence, we can expand
  \begin{equation}
    \label{eq:Ea_alt:20}
    \hat{\Es}_\xi(u) = \hat{\Es}_\xi(0) + \sum_{\rho\in\Rg}
    \hat{\Es}_{\xi,\rho}(0) \cdot D_\rho u(\xi) +
    \sum_{\rho,\vsig\in\Rg} V_{\rho\vsig}(\bfg_\xi) [D_\rho u(\xi),
    D_\vsig u(\xi)],
  \end{equation}
  where $\bfg_\xi = \theta_\xi Du(\xi) \in \DV_\inv$ for some $\theta_\xi
  \in (0, 1)$. By definition, $\hat{\Es}_\xi(0) = \hat{\Es}_{\xi,\rho}(0)
  = 0$, and hence,
  \begin{displaymath}
    \hat{\Es}_\xi(u) = \sum_{\rho,\vsig\in\Rg} V_{\rho\vsig}(\bfg_\xi) [D_\rho u(\xi),
    D\vsig u(\xi)] \qquad \forall \xi \in \L.
  \end{displaymath}
  It now follows from Lemma \ref{th:discrete_H1_norm} that
  $\hat{\Es}_\xi(u) \in \ell^1$, that is, $\hat{\Ea}(u)$ is
  well-defined.

  The differentiability of $\hat{\Es}$ can be shown using analogous
  arguments. %  and applying the first part of Lemma \ref{th:site_ens}.
 % the embedding $\Ush^{1,\infty} \subset
 %  \Ush^{1,2}$ (cf.  Lemma \ref{th:embedding}). The key observation is
 %  that, given any $\delta > 0$, if $\| \D \barv - \D \baru
 %  \|_{L^\infty} \lesssim \| \D \barv - \D \baru \|_{L^2}$ is
 %  sufficiently small and $u \in \DV_\inv$, then $\conv \{ Dv(\xi),
 %  Du(\xi) \} \subset \DV_{\inv+\delta}$, and hence expansions about
 %  $Du(\xi)$, analogous to \eqref{eq:Ea_alt:20}, can be performed.
\end{proof}

\begin{proof}[Proof of Theorem \ref{th:defn_Ea}]
  (i) If $u \in \Usz \cap\Ba$ then Lemma \ref{th:Ea_alt} implies
  that $\hat \Es_\xi(u) \in \ell^1$, while Lemma \ref{th:delEa0.eq.0}
  ensures that $(\hat \Es_\xi(u) - \Es_\xi(u)) \in \ell^1$.

  (ii) For $u \in \Usz \cap\Ba$, Lemma \ref{th:delEa0.eq.0}
  implies that $\Ea(u) = \hat\Ea(u)$. Since $\hat\Ea$ is continuous on
  $\Ush^{1,2}\cap\Ba$, and $\Usz$ is dense in $\Ush^{1,2}$, it follows
  that $\hat\Ea$ is the continuous extension of $\Ea$ to
  $\Ush^{1,2}\cap\Ba$.

  (iii) Since $\hat\Ea = \Ea$, this statement follows from Lemma
  \ref{th:Ea_alt}.  The simplified representation of $\del\Ea$
  (replacing $\hat\Phi_{\xi,\rho}$ with $\Phi_{\xi,\rho}$) follows
  from Lemma \ref{th:delEa0.eq.0}. For $j > 1$ and $\bfrho\in\Rg^j$ we
  have $\hat\Phi_{\xi,\bfrho} = \Phi_{\xi,\bfrho}$.
\end{proof}

\begin{remark}
  1. From the proof of Lemma \ref{th:Ea_alt} is becomes apparent why we
  assumed in \S\ref{sec:intro:V:smoothness} that $k \geq 2$. Only in
  that case are able to show that $\hat\Ea$ is well-defined on
  $\Ush^{1,2}\cap\Ba$. We note, however, that if we only assume $k = 1$,
  then the the definition \eqref{eq:defn_Ea_pre} still yields $\Ea \in
  C^1(\Ush^{1,1}\cap\Ba)$.

  2. To define $\Ea(u)$ it is not necessary to assume a bound on $\|
  \D u \|_{L^\infty}$. Indeed, if we require that $V$ is smooth at all
  configurations $Du(\xi)$ for displacements $u$ satisfying the
  injectivity requirement \eqref{eq:injective}, then Theorem
  \ref{th:defn_Ea} can be extended to this class of deformations.

  The key observation is that $|\rho|^{-1} |D_\rho u(\xi)| \to 0$
  uniformly in $\rho$ as $|\xi| \to \infty$, and also $|\rho|^{-1}
  |D_\rho u(\xi)| \to 0$ uniformly in $\xi$ as $|\rho| \to
  \infty$. (The first statement follows from the fact that $\xi
  \mapsto D_\rho u(\xi) \in \ell^2$; the second statement follows from
  the inequality $|\rho|^{-1}|D_\rho u(\xi)| \lesssim |\rho|^{-1/2} \|
  \D u \|_{L^2}$, which is easily established from
  \eqref{eq:localisation_trick}.) In particular, this implies that $D
  u(\xi) \in \DV_\inv$ for $|\xi|$ sufficiently large. One can now
  apply the expansion \eqref{eq:Ea_alt:20} in the far-field.
\end{remark}

\section{The Cauchy--Born Approximation}
\label{sec:intro:cb}
The Cauchy--Born elastic energy density function
$W : \R^{d \times d} \to \R\cup\{\pm\infty\}$ is defined by
\begin{align}
W(\mF) :=  V(\mF\cdot\Rg).
\end{align}
In the regime of ``smooth elastic'' deformations, the {\em
  Cauchy--Born} model is a popular approximation to the atomistic
model $\Ea$ \cite{BLBL:arma2002, E:2007a, Wallace98, Hudson:stab}.

Formally, if $u : \R^d \to \R^d$ is smooth, then $V(D u(\xi)) \approx
V( \D u(\xi) \cdot\Rg) = W(\D u(\xi))$ and hence we can approximate $\Ea(u)$
by
\begin{equation}
  \label{eq:defn_Ec}
  \Ec(u) := \int_{\Om} W(\D u) \dx.
\end{equation}
In the remainder of this section we introduce a function space setting
to make \eqref{eq:defn_Ec} rigorous, and establish associated
auxiliary results.

% Since $V$ was assumed to be bounded below, it follows that $W$ is
% bounded below as well. For future reference we also define the {\em
%   Cauchy--Born stress}
% \begin{displaymath}
%   \Scb(\mF) := \D W(\mF) = \sum_{\rho \in \Rg} V_\rho(\mF\Rg)
%   \otimes \rho.
% \end{displaymath}

\subsection{Homogeneous Sobolev spaces}
\label{sec:homsob}
The formal Euler--Lagrange equation associated with $\Ec$ is a
second-order elliptic system in $\Om$. Due to the translation
invariance, convenient function spaces for equations of this type are
the {\em homogeneous Sobolev spaces} (or, {\em Beppo-Levi
  spaces}~\cite{DL}). Here, we use a variant of spaces of equivalence
classes:
\begin{align*}
  \WWh^{m,2} :=\,& \b\{ [u] \bsep u \in \WW^{m,2}_{\rm loc}(\R^d; \R^d),
  \D u \in W^{m-1, 2} \b\}, \qquad \text{ for } m = 1, 2, \dots.
  % \WWhz :=\,& \b\{ [u] \bsep u \in C^\infty(\R^d; \R^d),  {\rm supp}(\D u) \text{ is
  %   compact } \b\}.
\end{align*}
The space $\WWh^{m,2}$ is equipped with the norm
\begin{displaymath}
  \| u \|_{\WWh^{m,2}} := \b({\textstyle \sum_{1 \leq j \leq m}} \| \D^j u \|_{L^2}^2\b)^{1/2}.
\end{displaymath}
It is easy to see (see \cite{OrSu:homsob:2012} for a proof of the case
$m = 1$; the general case is analogous) that $\WWh^{m,2}$ is a Banach
space, and that the subspace $\{ [v] \sep v \in C^\infty_0 \}$ is
dense, where $C^\infty_0 := \{ u \in C^\infty(\R^d; \R^d) \sep {\rm
  supp}(u) \text{ compact}\}$.

The natural space of continuous displacements is $\WWh^{1,2}$. In
order to avoid non-interpenetration of matter we shall assume that all
displacement gradients satisfy a uniform bound. To that end we define
\begin{displaymath}
  \Bc := \b\{ u \in W^{1,\infty} \bsep \| \D u \|_{L^\infty} \leq \inv \b\},
\end{displaymath}
where $\inv$ is the same constant as in the definition of $\Ba$
\eqref{eq:defn_Ba}.

A straightforward extension of \cite[Thm. 2.2]{OrSu:homsob:2012} shows
that $|u(x)| \ll |x|$ as $|x| \to \infty$ for all $u \in \WWh^{1,2}
\cap \Bc$, hence the associated {\em deformation} $y(x) = \mA x +
u(x)$ again satisfies the far-field boundary condition $y(x) \sim \mA
x$ as $|x| \to \infty$.

\subsection{Definition of the Cauchy--Born energy}
\label{sec:c:defn_Ec}
In this section we make the definition of the Cauchy--Born energy
\eqref{eq:defn_Ec} rigorous. We first analyze the stored energy
function $W$ of the Cauchy--Born model.

\begin{lemma}
  \label{th:defn_W}
  $W \in C^k(\{ \mF \in \R^{d \times d} \sep |\mF| \leq \inv \})$. For
  $|\mF| \leq \inv$ and $(\mG_1, \dots, \mG_j) \in (\R^{d \times
    d})^j$, $1 \leq j \leq k$, we have
  \begin{displaymath}
    \delta^j W(\mF) [\mG_1, \dots, \mG_j] = \sum_{\bfrho \in \Rg^j} V_\bfrho( \mF
    \cdot \Rg) [\mG_1 \rho_1, \dots, \mG_j \rho_j].
  \end{displaymath}
\end{lemma}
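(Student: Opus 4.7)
The plan is to realize $W$ as a composition $W = V \circ L$, where $L \colon \R^{d\times d} \to \DV$ is the linear evaluation map $L(\mF) := (\mF\rho)_{\rho\in\Rg} = \mF\cdot\Rg$. First I would check that $L$ is well-defined and bounded: since $|\mF\rho| \leq |\mF|\,|\rho|$, we have $\|L(\mF)\|_{\DV} = \max_{\rho\in\Rg} |\mF\rho|/|\rho| \leq |\mF|$, so $L$ is continuous and maps $\{|\mF|\leq\inv\}$ into $\DV_\inv$. In particular, $W = V\circ L$ is well-defined on $\{|\mF|\leq\inv\}$.

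Since $V\in C^k(\DV_\inv)$ by the smoothness assumption of \S\ref{sec:intro:V:smoothness} and $L$ is bounded linear, the Banach-space chain rule gives $W\in C^k$ on $\{|\mF|\leq\inv\}$ together with the identity
\[
  \delta^j W(\mF)[\mG_1,\dots,\mG_j] = \delta^j V(L\mF)[L\mG_1,\dots,L\mG_j], \qquad 1\leq j\leq k.
\]
It remains to express $\delta^j V$ in terms of the partial derivatives $V_\bfrho$ introduced in \S\ref{sec:intro:V:smoothness}; specifically, I would establish the representation
\[
  \delta^j V(\bfg)[\bfh_1,\dots,\bfh_j] = \sum_{\bfrho\in\Rg^j} V_\bfrho(\bfg)\bigl[h_{1,\rho_1},\dots,h_{j,\rho_j}\bigr], \qquad \bfg\in\DV_\inv,\ \bfh_i\in\DV,
\]
and then substitute $\bfh_i = L\mG_i$, i.e.\ $h_{i,\rho} = \mG_i\rho$, to obtain the formula stated in the lemma.

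The main technical step is the verification of the partial-derivative representation above, since on the infinite-dimensional space $\DV$ it is not automatic that a Fr\'echet derivative coincides with the sum of its formal partial derivatives. I would approach it by truncation: restricting $V$ to finite-dimensional sections (fixing $g_\rho$ for all $\rho$ outside a finite set $\Rg_N$) reduces matters to the classical finite-dimensional identification of Fr\'echet and partial derivatives, after which one passes to the limit $N\to\infty$. Absolute and uniform convergence of the truncated sums on bounded subsets of $\DV^j$ is supplied by the decay bound $M^{(j)}<\infty$ of \eqref{eq:basic_decay}: since $\|\bfh_i\|_{\DV}\leq 1$ implies $|h_{i,\rho_i}|\leq|\rho_i|$,
\[
  \sum_{\bfrho\in\Rg^j}\bigl|V_\bfrho(\bfg)[h_{1,\rho_1},\dots,h_{j,\rho_j}]\bigr| \leq \sum_{\bfrho\in\Rg^j}\|V_\bfrho(\bfg)\|\,|\rho_1|\cdots|\rho_j| \leq M^{(j)}.
\]
This is exactly where the decay hypothesis enters; beyond this identification, the proof is routine bookkeeping via the linear chain rule.
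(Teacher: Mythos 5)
Your proposal is correct and follows essentially the same route as the paper, whose entire proof is the one\-line observation that $|\mF|\leq\inv$ implies $\mF\cdot\Rg\in\DV_\inv$, so that $W=V\circ L$ inherits $C^k$ regularity and the derivative formula from the chain rule. The additional material you supply --- boundedness of the linear evaluation map $L$, and the truncation argument identifying $\delta^j V$ with the absolutely convergent sum of partials under the decay bound $M^{(j)}<\infty$ --- is exactly the detail the paper treats as immediate (it is used implicitly throughout, e.g.\ in \eqref{eq:djEa} and Lemma~\ref{th:variation_bounds}), so your write-up is a faithful, fleshed-out version of the same argument.
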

\begin{proof}
  This result follows immediately from the fact that, for $|\mF| \leq
  \inv$, $\mF\cdot\Rg \in \DV_\kappa$.
\end{proof}

The derivative of $W$ can be represented by the first Piola--Kirchhoff
stress tensor,
\begin{equation}
  \label{eq:defn_Scb}
  \Scb_{i\alpha}(u; x) := \left.\frac{\pp W(\mF)}{\pp \mF_{i\alpha}}\right|_{\mF = \D u(x)} \qquad \text{for } i,\alpha = 1\ldots d, \; u \in \Bc.
\end{equation}
Then we can write $\del W(\D u)[\mG] = \Scb(u) : \mG$, where '$:$'
denotes the usual Frobenius inner product.

Following the arguments in \S\ref{sec:defn_Ea}, we obtain that $\Ec$
is well-defined in $C^\infty_0 \cap \Bc$ and has a continuous
extension to $\WWh^{1,2} \cap \Bc$. The proof is analogous to the
proof of Theorem \ref{th:defn_Ea}.

\begin{proposition}[Definition of $\Ec$]
  \label{th:defn_Ec}
  (i) If $u \in C^\infty_0 \cap \Bc$, then $W(\D u) \in L^1$; hence
  $\Ec(u)$ is well-defined by \eqref{eq:defn_Ec}.

  (ii) $\Ec : (C^\infty_0 \cap\Bc, \|\;\cdot\;\|_{\WWh^{1,2}}) \to \R$ is
  continuous; hence there exists a unique continuous extension to
  $\WWh^{1,2} \cap \Bc$, which we still denote by $\Ec$.

  (iii) $\Ec \in C^k(\WWh^{1,2}\cap \Bc;
  \|\;\cdot\;\|_{\WWh^{1,2}}+\|\;\cdot\;\|_{\WWh^{1,\infty}})$ with
  \begin{align}
    \label{eq:ddelEc}
    \del^j\Ec(u)[\bfv] =\,& \int_{\R^d} \del^j W(\D u)[\D v_1, \dots,
    \D v_j] \dx, \qquad  \text{for } \bfv \in (C^\infty_0)^j, \quad 1 \leq j
    \leq k.
    % \qquad \text{for } v_i \in \WWh^{1,2} \cap
    % \WWh^{1,\infty}, \quad 2 \leq j \leq k.
  \end{align}
  % for $2 \leq j \leq k$ and $v_i \in \WWh^{1,2} \cap \WWh^{1,p_i}$,
  % $\sum_{i = 1}^j \smfrac{1}{p_i} = 1$.
  Moreover, for $\sum_{i = 1}^j \smfrac{1}{p_i} = 1$, we have
  \begin{equation}
    \label{eq:bound_delj_Ec}
    \del^j\Ec(u)[\bfv] \leq M^{(j)} \prod_{i = 1}^j \| \D v_i \|_{L^{p_i}}.
  \end{equation}
  % Moreover,
  % \begin{displaymath}
  %   \label{eq:delEc_cpct}
  %    \b\< \del\Ec(u), v \b\> = \int_{\R^d} \Scb(u) : \D v \dx \qquad
  %    \forall v \in \WWh^{1,1}.
  % \end{displaymath}
\end{proposition}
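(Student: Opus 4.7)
The plan is to mirror the atomistic proof of Theorem~\ref{th:defn_Ea}, exploiting the fact that the Cauchy--Born energy already has an integral representation. I would proceed in three stages corresponding to the three parts.

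Part (i) is essentially immediate: for $u \in C^\infty_0 \cap \Bc$, the gradient $\D u$ has compact support, and since $W(\bfO) = V(\bfO) = 0$ with $W$ continuous on $\{|\mF|\le\inv\}$ by Lemma~\ref{th:defn_W}, $W(\D u)$ is bounded with compact support, hence in $L^1$.

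For part (ii), I would introduce the modified density $\hat W(\mF) := W(\mF) - \del W(\bfO)[\mF]$ and the modified energy $\hat\Ec(u) := \int_{\R^d} \hat W(\D u) \dx$. The subtracted linear term is a null Lagrangian: for $u \in C^\infty_0$,
\begin{displaymath}
  \int_{\R^d} \del W(\bfO)[\D u] \dx = \del W(\bfO) : \int_{\R^d} \D u \dx = 0,
\end{displaymath}
so $\hat\Ec = \Ec$ on $C^\infty_0 \cap \Bc$. To show $\hat\Ec$ is well-defined on $\WWh^{1,2} \cap \Bc$, I apply the second-order Taylor expansion
\begin{displaymath}
  \hat W(\mF) = \int_0^1 (1-t)\, \del^2 W(t\mF)[\mF, \mF] \dt,
\end{displaymath}
together with the bound $|\del^2 W(\mG)[\mF,\mF]| \leq M^{(2)} |\mF|^2$, which follows from Lemma~\ref{th:defn_W} and the definition of $M^{(j)}$. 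This gives $|\hat W(\D u)| \leq \tfrac12 M^{(2)} |\D u|^2 \in L^1$ for $u \in \WWh^{1,2}\cap\Bc$. Continuity on $C^\infty_0 \cap \Bc$ in the $\WWh^{1,2}$-topology then follows from the same estimate by dominated convergence, and density of $C^\infty_0\cap\Bc$ in $\WWh^{1,2}\cap\Bc$ (obtained by cutoff-and-mollify, which preserves the $L^\infty$-bound on $\D u$) delivers the unique continuous extension $\Ec = \hat\Ec$.

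For part (iii), the integral representation \eqref{eq:ddelEc} follows by differentiating under the integral sign, using Lemma~\ref{th:defn_W} for $\del^j W$. The key pointwise bound is
\begin{displaymath}
  |\del^j W(\mF)[\mG_1, \ldots, \mG_j]|
  \leq \sum_{\bfrho \in \Rg^j} \|V_\bfrho(\mF\cdot\Rg)\| \prod_{i=1}^j |\rho_i|\, |\mG_i|
  \leq M^{(j)} \prod_{i=1}^j |\mG_i|,
\end{displaymath}
uniform on $\{|\mF| \leq \inv\}$ by the definition of $m(\bfrho)$ and $M^{(j)}$. The generalised H\"older inequality then yields \eqref{eq:bound_delj_Ec}, and standard arguments (continuity of $\del^j W$ combined with dominated convergence using this uniform bound) deliver the $C^k$-regularity in the topology generated by $\|\cdot\|_{\WWh^{1,2}} + \|\cdot\|_{\WWh^{1,\infty}}$.

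The main obstacle is part (ii): the subtraction of the linear term is essential, since otherwise $W(\D u)$ need not be integrable for non-compactly supported $u \in \WWh^{1,2}\cap\Bc$, and the argument requires $k \geq 2$ so that the quadratic Taylor remainder is controlled by $\del^2 W$. This is precisely analogous to the atomistic situation encountered in Lemma~\ref{th:Ea_alt}, with integration over $\R^d$ replacing summation over $\L$ and the null-Lagrangian identity replacing Lemma~\ref{th:delEa0.eq.0}.
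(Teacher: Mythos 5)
Your proposal is correct and follows essentially the same route the paper intends: the paper gives no separate proof of Proposition~\ref{th:defn_Ec}, stating only that it is ``analogous to the proof of Theorem~\ref{th:defn_Ea}'', and your argument is exactly that transcription, with the null-Lagrangian identity $\int \del W(\mO)[\D u]\dx = 0$ playing the role of Lemma~\ref{th:delEa0.eq.0} and the second-order Taylor expansion of $\hat W$ replacing the expansion in Lemma~\ref{th:Ea_alt}. No gaps beyond the level of detail the paper itself omits.
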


\section{Stress}
\label{sec:stress}
For $u \in \WWh^{1,2} \cap \Bc$ the canonical representation of
$\del\Ec(u)$ is (cf. \eqref{eq:ddelEc} with $j = 1$)
\begin{equation}
  \label{eq:delEc}
  \b\< \del\Ec(u), v \b\> = \int_{\R^d} \Scb(u) : \D v \dx,
  \qquad \text{for } v \in C^\infty_0.
\end{equation}
% For $u \in \WWh^{1,2} \cap \Bc$ the canonical representation of
% $\del\Ec(u)$ is (cf. \eqref{eq:ddelEc} with $j = 1$)
% \begin{equation}
%   \label{eq:stress:delEc_v1}
%   \b\< \del\Ec(u), v \b\> = \int_\Om \Scb(u) : \D v \dx.
% \end{equation}
The {\it first Piola--Kirchhoff stress} $\Scb$ is dual to the virtual
displacement gradient $\D v$.  By contrast, the first variation of the
atomistic energy is more commonly expressed in terms of forces, which
are dual to $v$. The purpose of this section is to derive and analyze
an atomistic concept of stress that yields a representation of
$\del\Ea$ analogous to \eqref{eq:delEc}. This will then be employed in
a sharp consistency analysis of the Cauchy--Born approximation in
\S~\ref{sec:stress:2ndorder}, \S~\ref{sec:stress:moderr}, and
\S~\ref{sec:w:cons}.

\subsection{An atomistic stress function}
The ``canonical weak form'' of $\del\Ea$, given in \eqref{eq:djEa}, is
\begin{equation}
  \label{eq:stress:delEa_v1}
  \b\<\del\Ea(u), v \b\> = \sum_{\xi\in\L}\sum_{\rho \in\Rg} \Es_{\xi,\rho}(u)
  \cdot D_\rho v(\xi), \qquad \text{for } v \in \Usz.
\end{equation}
To proceed, we fix $v \in \Usz$ but test $\del\Ea(u)$ with $\tilv$
instead of $v$. We apply the localization formula
\eqref{eq:localisation_trick} to obtain
\begin{align}
 \b\<\del\Ea(u), \tilv \b\>
 \notag
 =\,& \sum_{\xi\in\L}\sum_{\rho \in\Rg}  \Es_{\xi,\rho}(u) \cdot \int_\Om
 \ww_{\xi,\rho}(x) \, \Dc{\rho}\barv(x) \dx \\
 =\,& \int_\Om \bg\{ \sum_{\xi \in \L} \sum_{\rho \in \Rg}
 \b[\Es_{\xi,\rho}(u) \otimes \rho\b] \ww_{\xi,\rho}(x) \bg\} : \D\barv \dx.
  \label{eq:stress:delEa_v2}
\end{align}
Using the decay assumption \eqref{eq:basic_decay} one can apply
Fubini's theorem to justify the interchange of integral and sums, and
thus obtain the following result.

\begin{proposition}
  \label{th:Sa}
  Let $u \in \Ush^{1,2} \cap \Ba$, then
  \begin{equation}
    \label{eq:stress:defn_Sa}
    \begin{split}
      \b\< \del\Ea(u), \tilv \b\> =\,& \int_\Om \Sa(u;x) : \D \barv(x)
      \dx \qquad \forall v \in \Usz,\\
      \text{where} \quad \Sa(u; x) :=\,& \sum_{\xi \in \L} \sum_{\rho \in \Rg}
      \b[\Es_{\xi,\rho}(u) \otimes \rho\b] \ww_{\xi,\rho}(x),
    \end{split}
  \end{equation}
  and $\ww_{\xi,\rho}$ is defined in \eqref{eq:localisation_trick}.
\end{proposition}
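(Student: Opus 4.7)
The plan is to take the canonical weak form \eqref{eq:stress:delEa_v1} with the test function $\tilv$ in place of $v$, substitute the localization formula \eqref{eq:localisation_trick} for the resulting finite differences $D_\rho\tilv(\xi)$, and then interchange the order of summation and integration by a Fubini--Tonelli argument based on the basic decay bound \eqref{eq:basic_decay} together with the compact support of $\barv$. The two displays preceding the proposition already carry out the formal manipulation; what remains to be supplied is the analytic justification of the interchange and the identification of the inner sum as $\Sa(u;x)$.

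Before invoking \eqref{eq:stress:delEa_v1} I would first check that testing $\del\Ea(u)$ against $\tilv$ is legitimate. Since $v\in\Usz$ and $\zz$ has compact support, both $\barv$ and $\tilv=\zz\ast\barv$ have compact support, so the lattice restriction $\tilv|_\L$ again lies in $\Usz$. Theorem~\ref{th:defn_Ea}(iii), combined with Lemma~\ref{th:variation_bounds}, then delivers the absolutely convergent representation $\<\del\Ea(u),\tilv\>=\sum_{\xi\in\L}\sum_{\rho\in\Rg}\Es_{\xi,\rho}(u)\cdot D_\rho\tilv(\xi)$. Applying \eqref{eq:localisation_trick} to each $D_\rho\tilv(\xi)$, writing $\D_\rho\barv(x)=\D\barv(x)\rho$, and using the identity $a\cdot(\mM\rho)=(a\otimes\rho):\mM$, gives the integrand $[\Es_{\xi,\rho}(u)\otimes\rho]\,\ww_{\xi,\rho}(x):\D\barv(x)$, which is exactly the summand one needs to collect into $\Sa(u;x):\D\barv(x)$.

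The main obstacle is justifying the exchange of integration with the double sum. My plan is to dominate the absolute value of the generic term by $m(\rho)\,\ww_{\xi,\rho}(x)\,|\D\barv(x)|$, using $|\Es_{\xi,\rho}(u)|\le m(\rho)/|\rho|$ from the definition of $m(\bfrho)$ in \S\ref{sec:intro:V:decya} and $|\D_\rho\barv(x)|\le|\rho|\,|\D\barv(x)|$. This dominating quantity is non-negative, so Tonelli applies and it suffices to bound its total mass. The partition-of-unity property of $\zz$ yields
\begin{displaymath}
  \sum_{\xi\in\L}\ww_{\xi,\rho}(x)=\int_0^1\sum_{\xi\in\L}\zz(\xi+t\rho-x)\dt=1
  \qquad \text{for every } x\in\Om,
\end{displaymath}
and hence $\sum_{\xi,\rho}m(\rho)\,\ww_{\xi,\rho}(x)=M^{(1)}$ pointwise. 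Combining this with the compact support of $\barv$ gives $\sum_{\xi,\rho}\int_\Om m(\rho)\,\ww_{\xi,\rho}(x)\,|\D\barv(x)|\dx=M^{(1)}\,\|\D\barv\|_{L^1}<\infty$, finiteness of $M^{(1)}$ being guaranteed by \eqref{eq:basic_decay}. Fubini's theorem then legitimizes the interchange, and recognising the resulting integrand as $\Sa(u;x):\D\barv(x)$ completes the proof.

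As a byproduct, the same pointwise estimate delivers $|\Sa(u;x)|\le M^{(1)}$ uniformly in $x$, so the stress tensor is well-defined a.e.\ on $\Om$. No further delicate limiting arguments are required; the decay hypothesis \eqref{eq:basic_decay} enters exactly once, at the Fubini step.
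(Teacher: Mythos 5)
Your proposal follows exactly the paper's route: the paper derives \eqref{eq:stress:defn_Sa} by testing the canonical weak form \eqref{eq:stress:delEa_v1} with $\tilv$, inserting the localization formula \eqref{eq:localisation_trick}, and invoking Fubini via the decay assumption \eqref{eq:basic_decay}, which is precisely your argument. Your explicit domination by $m(\rho)\,\ww_{\xi,\rho}(x)\,|\D\barv(x)|$ together with $\sum_{\xi}\ww_{\xi,\rho}(x)=1$ correctly fills in the Fubini step that the paper leaves implicit.
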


\begin{remark}
  The fact that the test function in the left-hand side and
  right-hand side of~\eqref{eq:stress:defn_Sa} differ may seem
  counter-intuitive at first. Allowing this seeming discrepancy makes
  the rather simple definition of atomistic stress possible, and will
  lead to sharp consistency estimates requiring only modest analytical
  effort while the subsequent error analysis in \S\ref{sec:errsm} and
  \S\ref{sec:wave} requires only minor adjustments.
\end{remark}

\subsection{Second-order accuracy of the Cauchy--Born stress}
\label{sec:stress:2ndorder}
In this section we prove the following pointwise second-order
consistency estimate between the Cauchy--Born and atomistic stress
functions.

\begin{theorem}
  \label{th:stress:c2}
  Let $u \in \WWh^{3,\infty} \cap \Bc$, and $x \in \Om$, then
  \begin{align}
    \label{eq:stress:c2}
    \b| \Sa(u; x) - \Scb(u; x) \b| \lesssim\,&
    \sum_{\bfrho\in\Rg^2} \ms^{(\infty)}(\bfrho)\, \|
    \D^3 y \|_{L^\infty(x+\nu_{\rho_1,\rho_2})} \\
    \notag & + \sum_{\bfrho\in\Rg^3} \ms^{(\infty)}(\bfrho) \, \| \D^2
    y \|_{L^\infty(x+\nu_{\rho_1,\rho_2})} \| \D^2 y
    \|_{L^\infty(x+\nu_{\rho_1,\rho_3})},
  \end{align}
  where $\ms^{(\infty)}$ is defined in \S\ref{sec:intro:V:decya}, and
  for each $\rho,\vsig\in\Rg$ the set $\nu_{\rho\vsig}$ satisfies
  $\nu_{\rho\vsig} \subset B(0, c(|\rho|+|\vsig|))$, for some constant
  $c > 0$, $-\nu_{\rho\vsig}=\nu_{\rho\vsig}$, and
  \begin{equation}
    \label{eq:bounds_nu_rhosig}
    {\rm vol}(\nu_{\rho\vsig}) \lesssim |\rho\times \vsig| + |\rho| + |\vsig|.
  \end{equation}
\end{theorem}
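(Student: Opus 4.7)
The plan is to compare $\Sa$ and $\Scb$ pointwise via a Taylor expansion of $V_\rho$ about the affine configuration $\D u(x)\cdot\Rg$, and to extract the crucial cancellation via the inversion symmetry~\eqref{eq:intro:ptsymm_DV}. The starting observation is that, since $\zz$ reproduces affine functions and is symmetric, one has the partition-of-unity identity $\sum_{\xi\in\L}\ww_{\xi,\rho}(x)=1$ for every $x\in\Om$ and $\rho\in\Rg$. Inserting this identity into~\eqref{eq:stress:defn_Sa} yields
\[
\Sa(u;x)-\Scb(u;x)=\sum_{\xi\in\L}\sum_{\rho\in\Rg}\bigl[V_\rho(Du(\xi))-V_\rho(\D u(x)\cdot\Rg)\bigr]\otimes\rho\;\ww_{\xi,\rho}(x),
\]
which reduces the problem to a pointwise Taylor-error estimate for $V_\rho$.

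Writing $\eta_\vsig:=D_\vsig u(\xi)-\D u(x)\vsig$, I Taylor expand
\[
V_\rho(Du(\xi))-V_\rho(\D u(x)\cdot\Rg)=\sum_{\vsig\in\Rg}V_{\rho\vsig}(\D u(x)\cdot\Rg)[\eta_\vsig]+\tfrac12\sum_{\vsig,\tau\in\Rg}V_{\rho\vsig\tau}(\bfg_\xi^*)[\eta_\vsig,\eta_\tau],
\]
where $\bfg_\xi^*$ lies on the segment joining $\D u(x)\cdot\Rg$ and $Du(\xi)$. A direct Taylor estimate gives $|\eta_\vsig|\lesssim|\vsig|(|\xi-x|+|\vsig|)\|\D^2u\|_{L^\infty(\mathcal R)}$ on the segment $\mathcal R$ from $x$ to $\xi+\vsig$; moreover the support of $\ww_{\xi,\rho}(x,\cdot)$ forces $|\xi-x|\lesssim 1+|\rho|$. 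Summing the quadratic remainder in $\xi$ (using $\ww_{\xi,\rho}\ge 0$ and $\sum_\xi\ww_{\xi,\rho}(x)=1$) produces a contribution whose weight is $\tfrac12|\rho||\vsig||\tau|(|\rho|+|\vsig|)(|\rho|+|\tau|)\sup|V_{\rho\vsig\tau}|\lesssim\ms^{(\infty)}(\rho,\vsig,\tau)$, giving exactly the $\Rg^3$-sum in the bound.

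The main obstacle is the linear-in-$\eta$ term, whose naive size is only $O(\|\D^2u\|)$ but which must be shown to be $O(\|\D^3u\|)$. To achieve this I refine $\eta_\vsig=\D^2u(x)[(\xi-x)+\tfrac{\vsig}{2},\vsig]+R_\vsig$ with $|R_\vsig|\lesssim|\vsig|(|\xi-x|+|\vsig|)^2\|\D^3u\|$, and use the two moment identities
\[
\sum_{\xi\in\L}\ww_{\xi,\rho}(x)=1,\qquad \sum_{\xi\in\L}(\xi-x)\,\ww_{\xi,\rho}(x)=-\tfrac{\rho}{2},
\]
both following from the normalization, symmetry, and affine-reproduction properties of $\zz$. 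Summing the principal part of the linear term in $\xi$ then collapses it to
\[
L(x):=\tfrac12\sum_{\rho,\vsig\in\Rg}V_{\rho\vsig}(\D u(x)\cdot\Rg)\bigl[\D^2u(x)[\vsig-\rho,\vsig]\bigr]\otimes\rho,
\]
which is not zero summand-by-summand. The change of variables $(\rho,\vsig)\mapsto(-\rho,-\vsig)$, together with the $j=2$ case of~\eqref{eq:intro:ptsymm_DV} (so that $V_{-\rho,-\vsig}(\D u(x)\cdot\Rg)=V_{\rho\vsig}(\D u(x)\cdot\Rg)$) and the bilinear identity $\D^2u(x)[\rho-\vsig,-\vsig]=\D^2u(x)[\vsig-\rho,\vsig]$, then produces $L(x)=-L(x)$, hence $L(x)=0$. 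The remainder $R_\vsig$ contributes the $\Rg^2$-sum with prefactor $|\rho||\vsig|(|\rho|+|\vsig|)^2\sup|V_{\rho\vsig}|=\ms^{(\infty)}(\rho,\vsig)$.

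Finally, each occurrence of $\D^j u$ in the above estimates is evaluated on segments of the form $[x,\xi+t\vsig]$, $t\in[0,1]$, for $\xi$ in the support of $\ww_{\xi,\rho}(x,\cdot)$. Taking $\nu_{\rho\vsig}$ to be the symmetrization of a $\mathrm{supp}(\zz)$-fattening of the union of such segments gives a set contained in $B(0,c(|\rho|+|\vsig|))$, invariant under $x\mapsto -x$, and of volume $\vol(\nu_{\rho\vsig})\lesssim|\rho\times\vsig|+|\rho|+|\vsig|$ (the cross-product term corresponding to the area of the parallelogram spanned by $\rho$ and $\vsig$, the linear terms handling the near-parallel and short-vector cases). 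Replacing $\D^j u$ by $\D^j y$ for $j\ge 2$ and assembling the two contributions yields~\eqref{eq:stress:c2}. A secondary technical burden is ensuring the choice of $\nu_{\rho\vsig}$ is tight enough to realize this volume bound while still containing every integration domain produced by the Taylor remainders.
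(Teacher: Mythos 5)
Your proposal is correct and follows essentially the same route as the paper's own proof: the partition-of-unity reduction, the two-stage Taylor expansion with the moment identities $\sum_\xi\ww_{\xi,\rho}(x)=1$ and $\sum_\xi\ww_{\xi,\rho}(x)(\xi-x)=-\smfrac12\rho$, the cancellation of the leading term $\smfrac12\sum_{\rho,\vsig}V_{\rho\vsig}[\;\cdot\;,\D^2u(x)[\vsig-\rho,\vsig]]\otimes\rho$ via the inversion symmetry under $(\rho,\vsig)\mapsto(-\rho,-\vsig)$, and the same bookkeeping of the remainders against $\ms^{(\infty)}$ and the sets $\nu_{\rho\vsig}$. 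No gaps; the only cosmetic difference is that you make the volume bound \eqref{eq:bounds_nu_rhosig} slightly more explicit than the paper does.
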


Before we embark on the proof of Theorem \ref{th:stress:c2}, we
establish two useful identities for the weights $\ww_{\xi,\rho}$,
which will enable us to exploit the inversion symmetry
\eqref{eq:intro:ptsymm}.

\begin{lemma}
  \label{th:stress:phirho_Prop}
  Let $x, \rho \in \R^d$; then
  \begin{align}
    \label{eq:stress:phirho_Prop0}
    \sum_{\xi \in \L} \ww_{\xi,\rho}(x) =\,& 1, \qquad \text{and} \\
    \label{eq:stress:phirho_Prop1}
    \sum_{\xi \in \L} \ww_{\xi,\rho}(x) \,(\xi-x) =\,& -\smfrac12 \rho.
  \end{align}
\end{lemma}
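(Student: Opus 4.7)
The plan is to reduce both identities to the reproduction-of-affine-functions property of $\zz$ stated in Section~\ref{sec:interp}, namely $\sum_{\xi\in\L}(a+b\cdot\xi)\zz(y-\xi)=a+b\cdot y$ for all $a\in\R$, $b\in\R^d$, $y\in\R^d$. Taking $a=1$, $b=0$ yields the partition of unity $\sum_{\xi}\zz(y-\xi)=1$, and taking $a=0$ with $b$ ranging over a basis yields the ``first-moment'' identity $\sum_{\xi}\xi\,\zz(y-\xi)=y$.

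The key observation is that, by the symmetry $\zz(-x)=\zz(x)$, the integrand in $\ww_{\xi,\rho}(x)=\int_0^1\zz(\xi+t\rho-x)\dt$ can be rewritten as $\zz((x-t\rho)-\xi)$. So with the shorthand $y_t:=x-t\rho$, we have $\ww_{\xi,\rho}(x)=\int_0^1\zz(y_t-\xi)\dt$. For each fixed $t$ the $\xi$-sum has only finitely many nonzero terms (since $\zz$ has compact support), so Fubini trivially permits interchanging the sum with the $t$-integral.

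For \eqref{eq:stress:phirho_Prop0}, I would swap sum and integral and apply the partition-of-unity identity at $y_t$:
\begin{displaymath}
\sum_{\xi\in\L}\ww_{\xi,\rho}(x)=\int_0^1\sum_{\xi\in\L}\zz(y_t-\xi)\dt=\int_0^1 1\dt=1.
\end{displaymath}

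For \eqref{eq:stress:phirho_Prop1} the same interchange gives
\begin{displaymath}
\sum_{\xi\in\L}\ww_{\xi,\rho}(x)(\xi-x)=\int_0^1\Big\{\sum_{\xi\in\L}\zz(y_t-\xi)\,\xi\;-\;x\sum_{\xi\in\L}\zz(y_t-\xi)\Big\}\dt.
\end{displaymath}
The first-moment identity gives the inner sum on the left as $y_t$, while the partition of unity makes the second sum equal to $x$, so the bracketed expression equals $y_t-x=-t\rho$, and integrating in $t$ over $[0,1]$ produces $-\tfrac12\rho$, as claimed. There is no real obstacle here beyond bookkeeping; the only substantive ingredients are the two consequences of affine reproduction together with the symmetry of $\zz$, which allow us to reinterpret $\xi+t\rho-x$ as $y_t-\xi$ with $y_t$ playing the role of the ``evaluation point.''
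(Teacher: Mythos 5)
Your proposal is correct and follows essentially the same route as the paper: interchange the $t$-integral with the (locally finite) lattice sum and reduce both identities to the affine-reproduction property of $\zz$. The only cosmetic difference is that you use the evenness of $\zz$ to move the evaluation point to $y_t=x-t\rho$, whereas the paper achieves the same reduction by the substitution $\eta=-(\xi-x)$ and affine reproduction on the shifted grid $x+\L$.
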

\begin{proof}
  Both results rely on the assumption that affine functions are
  invariant under the first-order interpolant. Clearly, this is still
  true on a shifted grid: if $v : \R^d \to \R^d$ is affine, then for
  any $z, x \in \R^d$ we have
  \begin{equation}
    \label{eq:stress:phirhoprop:1}
    v(x) = \sum_{\eta \in (\L + z)} \zz(x - \eta) v(\eta).
  \end{equation}

  To prove \eqref{eq:stress:phirho_Prop0} we write out the left-hand
  side, and employ \eqref{eq:stress:phirhoprop:1} with $v(x) = 1$:
  \begin{align*}
     \sum_{\xi \in \L} \ww_{\xi,\rho}(x) =\,& \int_{s = 0}^1
     \sum_{\xi \in \L} \zz\b(\xi + (s\rho - x)\b) \ds = \int_{s=0}^1 1
     \ds = 1.
  \end{align*}
  % (It would have been sufficient to note that $\{ \zz(x-\xi) \sep \xi
  % \in \L \}$ is a partition of unity.)

  To prove \eqref{eq:stress:phirho_Prop1}, let $s \in [0, 1]$ be
  fixed; then
  \begin{align*}
    \sum_{\xi \in \L} \zz\b((\xi-x)+s\rho\b) (\xi-x)
    = \sum_{\eta \in(x+ \L)} \zz(s\rho - \eta) (-\eta),
  \end{align*}
  were we substituted $\eta = - (\xi-x)$, and hence sum over $-(\L -
  x) = (x+\L)$. Employing again \eqref{eq:stress:phirhoprop:1} with
  $v(x') = -x'$, we obtain
  \begin{displaymath}
    \sum_{\xi \in \L} \zz\b((\xi-x)+s\rho\b) (\xi-x) = - s\rho,
  \end{displaymath}
  and integrating with respect to $s$ gives
  \begin{displaymath}
     \sum_{\xi \in \L} \ww_{\xi,\rho}(x) (\xi-x)
     = \int_{s = 0}^1 \sum_{\xi \in \L} \zz\b((\xi-x)+s\rho\b) (\xi-x)
     \ds
     = \int_{s = 0}^1 (-s\rho) \ds = - \smfrac12 \rho. \qedhere
  \end{displaymath}
\end{proof}

\begin{proof}[Proof of Theorem \ref{th:stress:c2}]
  % ***We only give the full details for qualitative estimates. The
  % precise form of the constants is obtained by estimating various
  % error terms $E_j$ using straightforward Taylor expansions. We omit
  % these details, but explicitly state the results.***
  %
  Throughout this proof, let $\nu_{\xi,\vsig} := \conv\{x,
  \xi,\xi+\vsig\}$, and
  \begin{displaymath}
    \eps_{\xi,\vsig} := \| \D^2 u \|_{\LL^\infty(\nu_{\xi,\vsig})} \quad
    \text{and} \quad
    \delta_{\xi,\vsig} := \| \D^3 u \|_{\LL^\infty(\nu_{\xi,\vsig})}.
  \end{displaymath}
  We will suppress all arguments where it is possible to do so without
  confusion, for example, $\Sa = \Sa(u; x)$ and $\Scb = \Scb(u; x)$.

  Defining the symbols
  \begin{displaymath}
    V_\bfrho := V_\bfrho(x) := V_\bfrho\b(\D u(x)\cdot\Rg\b),
    \quad \text{ for } \bfrho \in \Rg^j, \quad j = 1, 2.
  \end{displaymath}
  we can rewrite $\Scb = \sum_{\rho \in \Rg} V_\rho \otimes
  \rho$. In \eqref{eq:stress:phirho_Prop0} we have established that
  $\sum_{\xi\in \L}\ww_{\xi,\rho}(x) = 1$, which implies
  \begin{align}
    \Sa - \Scb
    \notag
    =\,& \sum_{\rho \in \Rg} \bg\{ \sum_{\xi \in \L} \b[\Es_{\xi,\rho} \otimes
    \rho\b] \ww_{\xi,\rho}(x) - \b[ V_{\rho} \otimes \rho \b]
    \bg\} \\
    \label{eq:stress:c2_5}
    =\,& \sum_{\rho \in \Rg} \sum_{\xi \in \L} \B\{ \b[\Es_{\xi,\rho} -
    V_{\rho}\b] \otimes
    \rho \B\} \ww_{\xi,\rho}(x).
  \end{align}
  Since $\zz$ has compact support, there exists a constant $c > 0$
  such that $\chi_{\xi,\rho}(x) = 0$ for all $\xi\in\L$ with $|\xi-x|
  > c |\rho|$. Hence, we will assume throughout the rest of the proof
  that $|\xi-x| \lesssim |\rho|$.

  We Taylor expand the term $[\Es_{\xi,\rho} - V_{\rho}]$ as
  follows:
  % \footnote{Details for $E_1$: Taylor expansion
  %   $\Da{\tau}u(\xi)-\Dc{\tau}u(x) = \smfrac12 \Dc{\tau}^2 u(\theta_1)
  %   + \Dc{\xi-x}\Dc{\tau} u(\theta_2)$ gives
  %   \begin{align*}
  %     |E_1| \leq\,& \smfrac16 \sum_{\vsig,\tau\in \Rg}
  %     |V_{\rho,\vsig,\tau}({\bm \Theta})[\cdot,
  %     \Da{\tau}u(\xi)-\Dc{\tau}u(x), \Da{\vsig}u(\xi)-\Dc{\vsig}u(x)]| \\
  %     \leq\,&
  %     \smfrac16 \sum_{\vsig,\tau\in \Rg}
  %     M_\mu^{(\rho,\vsig,\tau)}
  %     | \Da{\tau}u(\xi)-\Dc{\tau}u(x)|
  %     | \Da{\vsig}u(\xi)-\Dc{\vsig}u(x)| \\
  %     \leq\,& \smfrac16 \sum_{\vsig,\tau\in \Rg}
  %     M_\mu^{(\rho,\vsig,\tau)} \b( \smfrac12 |\tau|^2 +
  %     |\xi-x||\tau| \b) \b( \smfrac12 |\vsig|^2 +
  %     |\xi-x||\vsig| \b) \eps_2^2.
  %   \end{align*}
  % }
  \begin{align}
    \label{eq:stress:c2_8}
    \Es_{\xi,\rho}  - V_{\rho}
    =\,& \sum_{\vsig \in \Rg} V_{\rho\vsig} [\;\cdot\;, \Da{\vsig} u(\xi) -
    \Dc{\vsig} u(x)] + E_1, \\
    \notag
    \text{where} \quad |E_1| \lesssim\,&  \sum_{\tau\vsig \in \Rg} (|\tau|+|\vsig|+|\rho|)^2    \smfrac{m(\rho, \vsig, \tau)}{|\rho|} \, \eps_{\xi,\vsig}\eps_{\xi,\tau}
  \end{align}
  The details of the estimates for the remainder $E_1$ are easily
  established, the key observation being that
  \begin{displaymath}
    |D_\vsig u(\xi) - \D_\vsig u(x)| \leq (\smfrac12
    |\vsig| + |\xi-x|) |\vsig| \eps_{\xi,\vsig} \lesssim (|\vsig| +
    |\rho|) |\vsig| \, \eps_{\xi,\vsig},
  \end{displaymath}
  which is obtained by expanding along the segments $\conv\{x, \xi\}$
  and $\conv\{\xi,\xi+\vsig\}$.  Here, and in the following, we skip
  the details required for estimating the remainders.

  Expanding $D_\vsig u(\xi) - \D_\vsig u(x)$,
  % \footnote{
  %   Details for $E_2$:
  %   \begin{align*}
  %     \Da{\tau}u(\xi) - \Dc{\tau} u(x) =\,& \Dc{\tau}u(\xi)  -
  %     \Dc{\tau}u(x)
  %     + \smfrac12 \Dc{\tau}^2u(\xi) + \smfrac16\Dc{\tau}^3u(\theta_1) \\
  %     =\,& [\Dc{\xi-x}\Dc{\tau}u(x) + \smfrac12 \Dc{\tau}^2 u(x)]
  %     + [\smfrac16\Dc{\tau}^3u(\theta_1)
  %     + \Dc{\xi-x}^2\Dc{\tau}u(\theta_2)
  %     + \smfrac12 \Dc{\xi-x} \Dc{\tau}^2(\theta_3)].
  %   \end{align*}
  % }
  \begin{align}
    \notag
    \Da{\vsig} u(\xi) - \Dc{\vsig} u(x)
    =\,& \Dc{\vsig} u(\xi) + \smfrac12
    \Dc{\vsig}^2 u(\xi) - \Dc{\vsig} u(x) + E_2' \\
    \label{eq:stress:c2_10}
    =\,& \Dc{\xi-x} \Dc{\vsig} u(x) + \smfrac12 \Dc{\vsig}^2 u(x) + E_2,
    \\[1mm]
    \notag
    \text{where} \quad |E_2| \lesssim\,& |\vsig| (|\vsig| + |\rho|)^2\,
    \delta_{\xi,\vsig},
    %
    % \notag
    % \text{where} \quad |E_2| \leq\,& \b(\smfrac16 |\vsig|^3
    % + \smfrac12 |\vsig|^2 |\xi-x| + |\vsig| |\xi-x|^2 \b)\, \delta_{\xi,\vsig}.
  \end{align}
  and combining \eqref{eq:stress:c2_10} with \eqref{eq:stress:c2_8}
  yields
  \begin{align}
    \label{eq:stress:c2_12}
    \Es_{\xi,\rho}  - V_{\rho} =\,& \sum_{\vsig \in \Rg}V_{\rho\vsig}
    \b[ \;\cdot\;, \Dc{\xi-x}
    \Dc{\vsig} u(x) + \smfrac12 \Dc{\vsig}^2 u(x) \b] + E_1 + E_3, \\
    \notag
    \text{where} \quad |E_3| \leq\,& \sum_{\vsig\in\Rg}
    \frac{m(\rho,\vsig)}{|\rho||\vsig|} |E_2|
    \lesssim
    \sum_{\vsig \in \Rg} \frac{m(\rho,\vsig)}{|\rho|} (|\vsig| + |\rho|)^2
    \, \delta_{\xi,\vsig},
  \end{align}
  which we insert into \eqref{eq:stress:c2_5} to obtain
  \begin{align}
    \label{eq:stress:c2_14}
    \Sa - \Scb
    =\,& \sum_{\rho \in \Rg} \sum_{\xi \in \L} \sum_{\vsig \in \Rg}
    \B\{ V_{\rho\vsig}\b[\;\cdot\;, \Dc{\xi-x}
    \Dc{\vsig} u + \smfrac12 \Dc{\vsig}^2 u \b] \otimes \rho \B\}
    \ww_{\xi,\rho}(x) + E_4, \\
    \notag
    \text{where} \quad E_4 =\,& \sum_{\rho \in \Rg} \sum_{\xi \in \L}
    \b[(E_1+E_3) \otimes \rho\b] \ww_{\xi,\rho}(x).
  \end{align}
  % Since $E_4$ is formally of second order, we focus on the remaining terms.

  Rearranging the sums, we arrive at the expression
  \begin{equation}
    \label{eq:stress:c2_15}
    \Sa - \Scb
    = \sum_{\rho \in \Rg} \sum_{\vsig \in \Rg} \B\{
    V_{\rho\vsig}\b[\;\cdot\;, {\textstyle \sum_{\xi\in\L}} \ww_{\xi,\rho}(x) (\Dc{\xi-x}
    \Dc{\vsig} u + \smfrac12 \Dc{\vsig}^2u) \b] \otimes \rho \B\}
    + E_4.
  \end{equation}
  Using \eqref{eq:stress:phirho_Prop0} we see that
  \begin{equation}
    \label{eq:stress:c2_17}
    \sum_{\xi \in \L} \ww_{\xi,\rho}\, \Dc{\vsig}^2 u
    = \Dc{\vsig}^2 u, \quad \text{and} \quad
    \sum_{\xi\in\L} \ww_{\xi,\rho}\, \D_{\xi-x} \D_\vsig u =
    -\smfrac12 \D_\rho \D_\vsig u.
  \end{equation}
  Combining \eqref{eq:stress:c2_17} with \eqref{eq:stress:c2_15}, we
  arrive at the identity
  \begin{align*}
    \Sa - \Scb =\,& \frac12 \sum_{\rho \in \Rg} \sum_{\vsig \in \Rg}
    V_{\rho\vsig} \b[\;\cdot\;, \Dc{\vsig}^2 y - \Dc{\vsig}\Dc{\rho} y \b]
    \otimes \rho + E_4.
  \end{align*}
  Applying the symmetry $V_{\rho\vsig} = V_{-\rho,-\vsig}$
  (cf. \S\ref{sec:intro:V:symmetry}), yields
  \begin{align*}
    V_{\rho\vsig} \b[\;\cdot\;, \Dc{\vsig}^2 y - \Dc{\vsig}\Dc{\rho} y \b]
    \otimes \rho
    + V_{-\rho,-\vsig} \b[\;\cdot\;, \Dc{-\vsig}^2 y - \Dc{-\vsig}\Dc{-\rho} y \b]
    \otimes (-\rho) = 0,
  \end{align*}
  and hence we deduce that $\Sa - \Scb = E_4$.

  It remains to bound $E_4$. Using its definition
  \eqref{eq:stress:c2_14}, and the bounds \eqref{eq:stress:c2_8} for
  $E_1$ and \eqref{eq:stress:c2_12} for $E_3$, and the estimate
  $\sum_{\xi\in\L} \ww_{\xi,\rho} f_\xi \leq \max_{\xi\in\L,
    \ww_{\xi,\rho}(x) \neq 0} f_\xi$ we estimate
  \begin{align}
    \notag
    |E_4| \leq\,& \sum_{\rho\in\Rg} |\rho|
    \sum_{\xi\in\L} (|E_1|+|E_3|) \ww_{\xi,\rho} \\
    \notag
    \lesssim\,& \sum_{\rho,\vsig,\tau\in\Rg} (|\rho|+|\vsig|+|\tau|)^2
    m(\rho,\vsig,\tau)
    \max_{\substack{\xi\in\L\\ \chi_{\xi,\rho}(x) \neq 0}}
    \eps_{\xi,\vsig} \eps_{\xi,\tau} \\
    \label{eq:stress:c2_25}
    & + \sum_{\rho,\vsig\in\Rg}
    (|\rho|+|\vsig|)^2  m(\rho,\vsig)
    \max_{\substack{\xi\in\L\\ \chi_{\xi,\rho}(x) \neq 0}}
    \delta_{\xi,\vsig}.
  \end{align}

  There exists a constant $c > 0$ such that, if $\chi_{\xi,\rho}(x)
  \neq 0$, then $|\xi + t\rho - x| \leq c$ for some $t \in [0, 1]$ and
  one readily checks that this implies
  \begin{displaymath}
    \nu_{\xi,\vsig} - x \subset \nu_{\rho,\vsig} := \b\{ y \in \R^d \bsep {\rm
      dist}(y, \conv\{\pm 2\rho\pm\vsig\}) \leq 2 c \b\}.
  \end{displaymath}
  Hence, we obtain
  \begin{displaymath}
    \max_{\substack{\xi\in\L \\ \chi_{\xi,\rho}(x) \neq 0}} \eps_{\xi,\vsig}
    \eps_{\xi,\tau} \leq \| \D^2 u \|_{L^\infty(x+\nu_{\rho,\vsig})} \|
    \D^2 u \|_{L^\infty(\nu_{\rho,\tau})} \quad \text{ and } \quad
    \max_{\substack{\xi\in\L \\ \chi_{\xi,\rho}(x) \neq 0}} \delta_{\xi,\vsig}
    \leq \| \D^3 u \|_{L^\infty(x+\nu_{\rho,\vsig})}.
  \end{displaymath}
  Inserting these bounds into \eqref{eq:stress:c2_25}, and recalling
  that $\Sa - \Scb = E_4$, we obtain the stated estimate.

  The statements about the sets $\nu_{\rho,\vsig}$ are easy to
  establish.
  % CO: I've checked this proof very carefully, and am confident about
  % the (missing) details.
\end{proof}

\subsection{Global modeling error estimate}
\label{sec:stress:moderr}
We present a modeling error estimate that is a natural corollary of
Theorem \ref{th:stress:c2}. For our subsequent analysis we only
require the case $p = 2$, however, we give a more general statement
since the same proof applies verbatim for general $p$. Earlier results
this direction have been obtained in \cite{MaSu:2011, E:2007a}.

\begin{lemma}
  \label{th:stress:moderr_negSob}
  Let $u \in \WWh^{3,p} \cap \Bc$, $p \in (1,\infty]$, and let $\tilu
  := \zz \ast u$; then, for all $v \in \Ush^{1,p'}$,
  \begin{equation}
    \label{eq:moderr_negSob}
    \b| \b\<\del\Ea(\tilu), \tilv \b\> - \b\< \del\Ec(u), \barv \b\>
    \b|
    \lesssim \b( \Ms^{(2,p)} \| \D^3 u \|_{\LL^p}
    + \Ms^{(3,p)} \| \D^2 u \|_{\LL^{2p}}^{2} \b) \|\D\barv \|_{\LL^{p'}},
    %\quad \forall v \in \Ush^{1,p'},
  \end{equation}
  where the constants $\Ms^{(j, p)}$ are defined in \S\ref{sec:intro:V:decya}.
\end{lemma}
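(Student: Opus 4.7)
The plan is to pass both variations to the atomistic/continuum stress representations, reduce via H\"older to an $L^p$-bound on a stress difference, and then invoke Theorem~\ref{th:stress:c2} pointwise together with a sliding-supremum covering estimate.

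First, by Proposition~\ref{th:Sa} and the canonical representation \eqref{eq:delEc},
\[
  \b\<\del\Ea(\tilu),\tilv\b\> - \b\<\del\Ec(u),\barv\b\>
  = \int_{\R^d}\b(\Sa(\tilu;x)-\Scb(u;x)\b):\D\barv(x)\dx,
\]
and H\"older's inequality in $L^p\times L^{p'}$ reduces the proof to showing that $\|\Sa(\tilu)-\Scb(u)\|_{L^p}$ is bounded by the right-hand side of \eqref{eq:moderr_negSob} divided by $\|\D\barv\|_{L^{p'}}$. I would then split $\Sa(\tilu)-\Scb(u) = \b[\Sa(\tilu)-\Scb(\tilu)\b] + \b[\Scb(\tilu)-\Scb(u)\b]$. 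The second, continuum-regularization summand is easily absorbed: since $\zz$ reproduces affine functions and is symmetric, a Taylor expansion gives $|\D\tilu-\D u|(x) \lesssim \int \zz(y)|y|^2|\D^3 u(x-ty)|\dy\dt$, so $\|\D\tilu-\D u\|_{L^p}\lesssim \|\D^3 u\|_{L^p}$ by Young's inequality; combined with Lipschitz continuity of $\Scb$ on $\Bc$ (with constant $\lesssim M^{(2)}$ thanks to Lemma~\ref{th:defn_W}), this contribution is controlled by the target $\Ms^{(2,p)}\|\D^3 u\|_{L^p}$ term.

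The principal summand $\Sa(\tilu)-\Scb(\tilu)$ is bounded pointwise by Theorem~\ref{th:stress:c2} applied at $\tilu$ (which lies in $\Bc$ since $\|\D\tilu\|_{L^\infty}\leq\|\D u\|_{L^\infty}\leq\inv$). To turn the resulting local $L^\infty$-norms over sets $x+\nu_{\rho,\vsig}$ into global $L^p$-bounds, I would use the auxiliary estimate
\[
  \B\|\|g\|_{L^\infty(\cdot+\nu)}\B\|_{L^q(\R^d)} \lesssim \vol(\nu)^{1/q}\,\|\ol g\|_{L^q(\R^d)},
\]
where $\ol g$ is a unit-scale local maximal function of $g$. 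This follows by covering $\nu$ with $N\sim\vol(\nu)$ unit cells $\{z_k+B(0,1)\}$, using $\sup_k a_k^q\leq\sum_k a_k^q$, and translation invariance of $L^q$, provided $q>1$. For the linear $\D^3\tilu$ term in Theorem~\ref{th:stress:c2} I apply this with $q=p$, producing $\vol(\nu_{\rho_1,\rho_2})^{1/p}$; for the quadratic $\D^2\tilu$ term I first split the $L^p$-norm of the product via H\"older as $1/p=1/(2p)+1/(2p)$ and then apply the estimate with $q=2p$ to each factor, yielding the $\vol^{1/(2p)}$ exponent per factor. Using $\|\D^k\tilu\|_{L^q}\leq\|\D^k u\|_{L^q}$ (Young's inequality with $\int\zz=1$) together with the volume bound \eqref{eq:bounds_nu_rhosig}, summation over $\bfrho\in\Rg^2$ and $\bfrho\in\Rg^3$ reproduces precisely $\Ms^{(2,p)}\|\D^3 u\|_{L^p}$ and $\Ms^{(3,p)}\|\D^2 u\|_{L^{2p}}^2$.

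The main obstacle is the sliding-supremum estimate in step three, and in particular obtaining the correct $1/((j-1)p)$ exponent on $\vol(\nu)$: the H\"older split must occur \emph{before} the covering argument, and care is needed to bound the two separate $\nu_{\rho_1,\rho_i}$ sets by the single volume factor in the definition of $\ms^{(p)}$. A secondary technical point is that Theorem~\ref{th:stress:c2} is stated for $\WWh^{3,\infty}$-regularity whereas here $\tilu$ inherits only $\WWh^{3,p}$-regularity from $u$, so the pointwise bound of that theorem must be interpreted almost everywhere, or justified by first mollifying $u$ further and passing to the limit.
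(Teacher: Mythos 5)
Your proposal follows essentially the same route as the paper's proof: Hölder reduction to $\|\Sa(\tilu)-\Scb(u)\|_{L^p}$, the same triangle-inequality split into $[\Sa(\tilu)-\Scb(\tilu)]+[\Scb(\tilu)-\Scb(u)]$, the convolution error bound $\|\D\tilu-\D u\|_{L^p}\lesssim\|\D^3u\|_{L^p}$ for the second piece, and Theorem~\ref{th:stress:c2} combined with a sliding-supremum estimate (the paper's Lemma~\ref{th:conv_est}, proved there via Jensen's inequality rather than your unit-cell covering) plus the Hölder split $1/p=1/(2p)+1/(2p)$ for the first piece. The technical caveats you flag at the end are real but minor and are handled implicitly in the paper (in particular $\tilu=\zz\ast u\in W^{3,\infty}_{\rm loc}$ already, since $\D^3\tilu=\zz\ast\D^3u$ with $\zz\in L^{p'}$ compactly supported).
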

\begin{proof}
  The case $p = \infty$ follows immediately from Theorem
  \ref{th:stress:c2}, hence we assume that $p \in (1,
  \infty)$. Further, we assume that $v \in \Usz$, and apply a density
  argument to obtain the general statement.

  First, we need to show that $w := \tilu|_\L \in \Ba$. To that end,
  we estimate
  \begin{align*}
    |D_\rho w(\xi)| = \bg|\int_{t = 0}^1 \D_\rho \tilu(\xi+t\rho)
    \dt\bg| \leq |\rho| \|\D \tilu \|_{L^\infty} \leq |\rho|\|
    \zz\|_{L^1} \| \D u \|_{L^\infty}.
  \end{align*}
  Since $u \in \Bc$ and $\| \zz \|_{L^1} = \int \zz \dx = 1$, we
  obtain that $|D_\rho w(\xi)| \leq \inv |\rho|$ and hence $w =
  \tilu|_\L \in \Ba$. Hence it follows that the first variations on
  the left-hand side are well-defined.

  From Proposition \ref{th:Sa} it follows that
  \begin{align}
    \notag
    \b| \b\<\del\Ea(\tilu), \tilv \b\> - \b\< \del\Ec(u), \barv \b\> \b|
    =\,& \int_\Om \b[ \Sa(\tilu) - \Scb(u) \b] : \D \barv \dx \\
    \label{eq:stress:moderr:10}
    \leq\,& \b\| \Sa(\tilu) - \Scb(u) \b\|_{\LL^p} \| \D \barv
    \|_{\LL^{p'}}.
  \end{align}
  We apply the triangle inequality,
  \begin{equation}
    \label{eq:stress:moderr:15}
    \b\| \Sa(\tilu) - \Scb(u) \b\|_{\LL^p} \leq
    \b\| \Sa(\tilu) - \Scb(\tilu) \b\|_{\LL^p}
    + \b\| \Scb(\tilu) - \Scb(u) \b\|_{\LL^p},
  \end{equation}
  to separately estimate the two terms on the right-hand side.

  Applying \eqref{eq:bound_delj_Ec} and Lemma
  \ref{th:convolution_error}, we can bound the second term by
  \begin{equation}
    \label{eq:stress:moderr:20}
    \b\| \Scb(\D\tilu) - \Scb(\D u) \b\|_{\LL^p} \lesssim M^{(2)} \| \D
    \tilu - \D u \|_{\LL^p} \lesssim M^{(2)} \| \D^3 u \|_{L^p}.
  \end{equation}

  To estimate the first term on the right-hand side of
  \eqref{eq:stress:moderr:15} we first apply Theorem
  \ref{th:stress:c2} to obtain
  \begin{align}
    \label{eq:stress:moderr:100}
    \b| \Sa(\tilu; x) - \Scb(\tilu; x) \b| \lesssim\,&
    \sum_{\rho,\vsig\in\Rg} \ms^{(\infty)}(\rho,\vsig) \|
    \D^3 \tilu \|_{L^\infty(x+\nu_{\rho,\vsig})} \\
    \notag
    & +
    \sum_{\rho,\vsig,\tau\in\Rg} \ms^{(\infty)}(\rho,\vsig,\tau)
    \| \D^2 \tilu \|_{L^\infty(x+\nu_{\rho,\vsig})} \| \D^2 \tilu \|_{L^\infty(x+\nu_{\rho,\tau})}.
  \end{align}
  Next, we estimate the $L^p$-norm of the first term on the right-hand
  side. To that end, we first recall the definition of
  $\nu_{\rho,\vsig}$ from Theorem \ref{th:stress:c2} as well as the
  enlarged sets $\nu'_{\rho,\vsig}$ defined in
  Lemma~\ref{th:conv_est}. Let $(w_{\rho\vsig}) \in \ell^p(\Rg^2)$,
  and let $w := (\sum_{\rho,\vsig\in\Rg}
  w_{\rho\vsig}^{p'}\ms^{(\infty)}(\rho,\vsig))^{p'/p}$, then applying
  first H\"{o}lder's inequality and then Lemma \ref{th:conv_est},
  gives
  \begin{align*}
    \int_\Om \bg| \sum_{\rho,\vsig\in\Rg} \ms^{(\infty)}(\rho,\vsig) \|
    \D^3 \tilu \|_{L^\infty(\nu_{\rho,\vsig})} \bg|^p \dx
    \leq\,& w \sum_{\rho,\vsig \in \Rg} w_{\rho\vsig}^{-p}
    \ms^{(\infty)}(\rho,\vsig) \int_\Om
    \| \zz \ast \D^3 u \|_{L^\infty(x+\nu_{\rho,\vsig})}^p \dx \\
    \leq\,& w \sum_{\rho,\vsig\in\Rg} w_{\rho\vsig}^{-p}
    \ms^{(\infty)}(\rho,\vsig) {\rm vol}(\nu_{\rho,\vsig}')\,\| \D^3 u \|_{L^p}^p.
  \end{align*}
  Choosing $w_{\rho\vsig}$ to balance $w$ with
  $\sum_{\rho,\vsig\in\Rg} w_{\rho\vsig}^{-p}
  \bar{M}^{(\rho,\vsig)}_\mu \,\vol(\nu_{\rho,\vsig}')$, and noting that
  $\vol(\nu'_{\rho\vsig}) \lesssim \vol(\nu_{\rho\vsig})$ (this follows
  immediately from the definition of $\nu_{\rho\vsig}$) yields
  \begin{equation}
    \label{eq:stress:moderr:110}
    \bg(\int_\Om \bg| \sum_{\rho,\vsig\in\Rg} \ms^{(\infty)}(\rho,\vsig) \|
    \D^3 \tilu \|_{L^\infty(\nu_{\rho,\vsig})} \bg|^p \dx\bg)^{1/p} \lesssim
    \Ms^{(2,p)} \| \D^3 u \|_{L^p}.
  \end{equation}
  With an analogous argument we obtain
  \begin{equation}
    \label{eq:stress:moderr:120}
    \bg(\int_\Om \bg| \sum_{\rho,\vsig,\tau\in\Rg} \ms^{(\infty)}(\rho,\vsig,\tau) \|
    \D^2 \tilu \|_{L^\infty(x+\nu_{\rho,\vsig})} \|
    \D^2 \tilu \|_{L^\infty(x+\nu_{\rho,\tau})} \bg|^{2p} \dx\bg)^{1/p} \lesssim
    \Ms^{(3,p)} \| \D^2 u \|_{L^{2p}}^2.
  \end{equation}
  Combining \eqref{eq:stress:moderr:100}, \eqref{eq:stress:moderr:110}
  and \eqref{eq:stress:moderr:120} with \eqref{eq:stress:moderr:20},
  and noting that $M^{(2)} \leq \Ms^{(2,p)}$, completes the proof.
\end{proof}

\section{Elastostatic problems}
\label{sec:errsm}
In this section we present error estimates for local minimizers of the
Cauchy--Born model. We essentially recover the result of E and Ming
\cite[Thm. 2.3]{E:2007a} for a more general class of interactions, and
in the more challenging setting of an infinite domain and infinite
interaction range. Moreover, due to our new consistency estimates in
\S\ref{sec:stress}, we obtain sharper and more explicit estimates.

\subsection{The variational problems}
\label{sec:min:problems}
\subsubsection{Continuous external forces}
\label{sec:cont_ext_frcs}
For $f, g \in L^1_{\rm loc}$ with $f\cdot g \in L^1$ we define the inner
product
\begin{displaymath}
  (f, g)_{\R^d} := \int_{\R^d} f\cdot g \dx.
\end{displaymath}
We say that $f \in L^{1}_{\rm loc} \cap \WWh^{-1,2}$ if there exists a
constant $\| f \|_{\WWh^{-1,2}}$ such that
%  $(f,
% u^*)_{\R^d}$ is independent of the choice of $u^* \in [u] \in \WWhz$
% and if
\begin{displaymath}
  (f, v)_{\R^d} \leq \| f \|_{\WWh^{-1,2}} \| \D v \|_{L^2} \qquad
  \forall v \in C^\infty_0.
\end{displaymath}
In this case there exists a unique continuous extension of $(f,
\;\cdot\;)_{\R^d}$ to $\WWh^{1,2}$.

\subsubsection{The Cauchy--Born Problem}
\label{sec:cb_problem}
In the Cauchy--Born model, given $f^\c \in L^1_{\rm loc} \cap
\WWh^{-1,2}$, we seek
\begin{equation}
  \label{eq:intro:var_c}
  u^\c \in \arg \min \b\{ \Ec(u) - ( f^\c, u)_\Om
  \bsep u \in \WWh^{1,2} \b\}.
\end{equation}
We understand \eqref{eq:intro:var_c} as a {\em local} minimization
problem with respect to the $(\WWh^{1,2} \cap
\WWh^{1,\infty})$-topology. If $u^\c \in \Bc$ is a solution to
\eqref{eq:intro:var_c}, then it satisfies the first-order optimality
condition
\begin{equation}
  \label{eq:intro:var_c_EL}
  \b\< \del\Ec(u^\c), v \b\> = ( f^\c, v)_\Om \qquad \forall v \in \WWhz.
\end{equation}
We call a solution $u^\c$ of \eqref{eq:intro:var_c_EL} {\it stable} if
there exists $c_0 > 0$ such that
\begin{equation}
  \label{eq:intro:var_c_suff}
  \b\< \ddel\Ec(u^\c) v, v \b\> \geq c_0 \| \D v \|_{\LL^2}^2 \qquad
  \forall v \in \WWhz.
\end{equation}
From Proposition \ref{th:defn_Ec} it follows that, if $u^\c$ is a
stable solution of \eqref{eq:intro:var_c_EL}, then it is a strict
$(\WWh^{1,2} \cap \WWh^{1,\infty})$-local minimizer of $\Ec - ( f^\c,
\;\cdot\;)_\Om$, and hence a solution of \eqref{eq:intro:var_c}.

\subsubsection{External forces in the atomistic problem}
\label{sec:min:ext_frcs}
For $f, g \in \Us$ with $f \cdot g \in \ell^1$ we define the inner
product
\begin{displaymath}
  (f, g)_{\L} := \sum_{\xi \in \L} f(\xi) \cdot g(\xi).
\end{displaymath}
We say that $f \in \Ush^{-1,2}$ if $f \in \Us$ and there exists a
constant $\| f \|_{\Ush^{-1,2}}$ such that
\begin{displaymath}
  (f, v)_{\L} \leq \| f \|_{\Ush^{-1,2}} \, \| \D \barv \|_{L^2}
  \qquad \forall v \in \Usz.
\end{displaymath}
In this case there exists a unique continuous extension of $(f,
\;\cdot\;)_{\L}$ to $\Ush^{1,2}$.

\subsubsection{The atomistic problem}
\label{sec:min:atm_prob}
Given $f^\a \in \Ush^{-1,2}$ we seek
\begin{equation}
  \label{eq:intro:var_a}
  u^\a \in \arg\min \b\{ \Ea(u) - (f^\a, u)_\L \bsep u \in \Ush^{1,2} \b\}.
\end{equation}
We understand \eqref{eq:intro:var_a} as a {\em local} minimization
problem. If $u^\a \in \Ba$ is a solution to \eqref{eq:intro:var_a},
then it satisfies the first-order optimality condition
\begin{equation}
  \label{eq:intro:var_a_EL}
  \b\< \del\Ea(u^\a), v \b\> = (f^\a, v)_\L \qquad \forall v \in \Usz.
\end{equation}
We call a solution $u^\a$ of \eqref{eq:intro:var_a_EL} {\em stable} if
there exists $c_0 > 0$ such that
\begin{equation}
  \label{eq:intro:var_a_suff}
  \b\< \ddel\Ea(u^\a) v, v \b\> \geq c_0 \| \D \barv\|_{L^2}^2
  \qquad \forall v \in \Usz.
\end{equation}
From Proposition \ref{th:defn_Ea} it follows that, if $u^\a$ is a
stable solution of \eqref{eq:intro:var_a_EL} then $u^\a$ is a strict
$\Ush^{1,2}$-local minimizer of $\Ea - ( f^\a, \;\cdot\;)_\L$
and hence a solution of \eqref{eq:intro:var_a}.

\subsection{Stability of small displacements}
\label{sec:stab_small}
We say that the lattice $\mA \cdot \L$ is stable if
\begin{equation}
  \label{eq:defn_Lam}
  \Lam := \inf_{\substack{v \in \Usz \\ \| \D\barv \|_{L^2} = 1}}
  \b\< \ddel\Ea(0) v, v \b\> > 0.
\end{equation}
Physically, \eqref{eq:defn_Lam} states that small distortions of the
Lattice $\mA \cdot \L$ {\em increase} its energy.

For simple interactions \eqref{eq:defn_Lam} can be proven analytically
\cite{OrtnerShapeev:2011pre, EM:cb_dynamic}. In practise one checks
this stability condition by block-diagonalising $\ddel\Ea(0)$ using
Fouriers series \cite{Wallace98, E:2007a, Hudson:stab}, that is, one
checks whether the dispersion relation satisfies $\omega(k) \geq c
|k|$. The condition is discussed in more detail in \cite{Hudson:stab}
and in Appendix \ref{sec:app_stab}.

Assuming only \eqref{eq:defn_Lam} we can deduce stability of ``small''
displacements both in the atomistic and Cauchy--Born models. The
factor $\smfrac12$ in the following result is arbitrary and may be
replaced with any number between zero and one.

\begin{proposition}
  \label{th:stab_small}
  Let $\mA \cdot \L$ be stable, then there exists $\inv_1 >
  0$ such that, for $\inv \leq \inv_1$,
  \begin{align*}
    \b\< \ddel\Ea(u) v, v \b\> \geq~& \smfrac12 \Lam \|\D \barv
    \|_{L^2}^2 \qquad \forall v \in
    \Ush^{1,2}, \quad \forall u \in \Ba, \quad \text{and} \\
    \b\< \ddel\Ec(u) v, v \b\> \geq~& \smfrac12 \Lam \|\D v\|_{L^2}^2
    \qquad \forall v \in \WWh^{1,2}, \quad \forall u \in
    \Bc.
  \end{align*}
\end{proposition}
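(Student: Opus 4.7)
The plan is a perturbation argument around $u = 0$, combining the stability \eqref{eq:defn_Lam} with the smoothness of the second variation on the convex set $\DV_\inv$.

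\textbf{Atomistic bound.} For $u \in \Ba$, every stencil satisfies $Du(\xi) \in \DV_\inv$, so $V$ is $C^k$ there and the segment joining $Du(\xi)$ to $\bfO$ stays in $\DV_\inv$. Writing
\begin{displaymath}
  \<\ddel\Ea(u)v - \ddel\Ea(0)v,\, v\> = \sum_{\xi\in\L}\sum_{\rho,\vsig\in\Rg} \b[V_{\rho\vsig}(Du(\xi))-V_{\rho\vsig}(\bfO)\b]\b[D_\rho v(\xi), D_\vsig v(\xi)\b],
\end{displaymath}
I would apply the mean value theorem along $\theta Du(\xi)$, $\theta\in[0,1]$, to bound each bracket by $\sum_{\tau\in\Rg} m(\rho,\vsig,\tau)/(|\rho||\vsig||\tau|)\cdot|D_\tau u(\xi)|$ (using the definition of $m(\bfrho)$ from \S\ref{sec:intro:V:decya}). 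Applying Lemma \ref{th:variation_bounds} to the resulting trilinear expression with the $L^\infty$ factor placed on $u$, and using $\|\D\bar u\|_{L^\infty}\lesssim \inv$ (which follows from $|D_\rho u(\xi)|\leq\inv$ for $\rho = e_i$ combined with the nodal-interpolant properties of Lemma \ref{th:interp:basic_prop}), yields
\begin{displaymath}
  \b| \<[\ddel\Ea(u)-\ddel\Ea(0)]v,v\> \b| \lesssim \inv\,M^{(3)}\,\|\D\bar v\|_{L^2}^2.
\end{displaymath}
Choosing $\inv_1$ so that the implicit constant times $\inv_1\,M^{(3)}$ is at most $\smfrac12\Lam$ gives the estimate on $\Usz$; density (Proposition \ref{th:interp:energy-space}) extends it to $\Ush^{1,2}$.

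\textbf{Cauchy--Born bound at $u=0$.} I would next show $\<\ddel\Ec(0)v,v\>\geq \Lam\|\D v\|_{L^2}^2$ for $v\in\WWh^{1,2}$. For $v\in C^\infty_0$, the scaled lattice function $v_n(\xi) := v(\xi/n)$ lies in $\Usz$ and satisfies $D_\rho v_n(\xi) = n^{-1}\D_\rho v(\xi/n) + O(n^{-2}|\rho|^2)$. A direct Riemann-sum calculation (using the decay $M^{(2)}<\infty$ to exchange sum and limit) gives
\begin{displaymath}
  n^{2-d}\,\<\ddel\Ea(0)v_n, v_n\> \;\longrightarrow\; \<\ddel\Ec(0)v,v\>, \qquad n^{2-d}\,\|\D\bar v_n\|_{L^2}^2 \;\longrightarrow\; \|\D v\|_{L^2}^2,
\end{displaymath}
as $n\to\infty$, and \eqref{eq:defn_Lam} then passes to the limit; density of $C^\infty_0$ in $\WWh^{1,2}$ (see \S\ref{sec:homsob}) completes the transfer. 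Equivalently, Plancherel identifies the Cauchy--Born symbol $\sum V_{\rho\vsig}(\bfO)(k\cdot\rho)(k\cdot\vsig)$ as the quadratic low-$|k|$ expansion of the atomistic Bloch symbol $\sum V_{\rho\vsig}(\bfO)(e^{ik\cdot\rho}-1)(e^{-ik\cdot\vsig}-1)$, and atomistic stability forces the quadratic part to be $\geq \Lam|k|^2$.

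\textbf{Cauchy--Born perturbation.} For $u\in\Bc$, $\D u(x)$ lies where $W$ is smooth (Lemma \ref{th:defn_W}), and the mean-value expansion of $\del^2 W(\D u) - \del^2 W(0)$ controlled by the pointwise bound $|\del^3 W(\mF)[\mG_1,\mG_2,\mG_3]|\lesssim M^{(3)}|\mG_1||\mG_2||\mG_3|$ (from Lemma \ref{th:defn_W} applied with $j=3$) yields $|\<[\ddel\Ec(u)-\ddel\Ec(0)]v,v\>|\lesssim \inv\,M^{(3)}\|\D v\|_{L^2}^2$, absorbable into $\smfrac12\Lam$ by the same smallness choice of $\inv_1$. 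The main obstacle is the first Cauchy--Born step: transferring atomistic stability to the continuum form at $u=0$, which requires care with the exchange of limit and summation over $(\rho,\vsig)\in\Rg^2$ in the Riemann-sum argument (or equivalently the uniform estimation of the Bloch symbol). The remaining perturbation estimates are routine under the decay hypothesis $M^{(3)}<\infty$; an implicit requirement is $k\geq 3$, and if only $k=2$ is assumed the mean value step is replaced by a dominated-convergence argument using $|V_{\rho\vsig}(\bfg)-V_{\rho\vsig}(\bfO)|\leq 2\,m(\rho,\vsig)/(|\rho||\vsig|)$ together with continuity of $V_{\rho\vsig}$ on $\DV_\inv$, which gives the same qualitative conclusion.
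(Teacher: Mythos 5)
Your proposal is correct and follows essentially the same route as the paper: a Lipschitz perturbation of the Hessians around $u=0$ controlled by $\inv\,M^{(3)}$, combined with a scaling/Riemann-sum argument transferring the atomistic stability \eqref{eq:defn_Lam} to the continuum Hessian at $0$ (the paper's Lemma \ref{th:stab_hom}, which delegates the convergence of the rescaled quadratic forms to a cited lemma of Hudson rather than computing the limit directly as you do). The Fourier/Bloch-symbol remark and the $k=2$ fallback are consistent with, though not needed for, the paper's argument.
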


Before we prove Proposition \ref{th:stab_small} we state a variant of
a classical intermediate result (see, e.g., \cite[p. 89]{Wallace98};
the following proof is adapted from \cite[Thm. 3.1]{Hudson:stab}).

\begin{lemma}
  \label{th:stab_hom}
  Let $\Lam$ be defined by \eqref{eq:defn_Lam}, then
  \begin{displaymath}
    \b\<\ddel\Ec(0) v, v \b\> \geq \Lam \|\D v \|_{L^2}^2 \qquad
    \forall v \in \WWh^{1,2}.
  \end{displaymath}
\end{lemma}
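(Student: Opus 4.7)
The plan is to deduce the Cauchy--Born stability bound from the atomistic one by a continuum-limit/scaling argument. Fix $v \in C^\infty_0$, and for each $\eps > 0$ introduce the sampled lattice function $v_\eps \in \Usz$ defined by $v_\eps(\xi) := v(\eps\xi)$; compact support of $v$ guarantees $v_\eps \in \Usz$ for every $\eps > 0$. Applying \eqref{eq:defn_Lam} to $v_\eps$ and multiplying by $\eps^{d-2}$ yields
\begin{equation*}
  \eps^{d-2}\b\<\ddel\Ea(0) v_\eps, v_\eps\b\> \;\geq\; \Lam\, \eps^{d-2} \| \D\barv_\eps \|_{L^2}^2.
\end{equation*}
I will show that both sides converge to their Cauchy--Born counterparts as $\eps \to 0$, and then extend from $C^\infty_0$ to $\WWh^{1,2}$ by density.

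For the right-hand side, the first-order interpolant reproduces affine functions, so a Taylor expansion of $\xi \mapsto v(\eps \xi)$ about $\eps x$ gives $\barv_\eps(x) = v(\eps x) + O(\eps^2 \|\D^2 v\|_{L^\infty})$; hence $\D\barv_\eps(x) = \eps\,\D v(\eps x) + O(\eps^2)$ and, after the change of variables $y = \eps x$, $\eps^{d-2}\|\D\barv_\eps\|_{L^2}^2 \to \|\D v\|_{L^2}^2$. For the left-hand side, I use the representation from Theorem~\ref{th:defn_Ea},
\begin{equation*}
  \b\<\ddel\Ea(0) v_\eps, v_\eps\b\> = \sum_{\xi\in\L}\sum_{\rho,\vsig\in\Rg}V_{\rho\vsig}(\bfO)\b[\Da{\rho}v_\eps(\xi), \Da{\vsig}v_\eps(\xi)\b],
\end{equation*}
and Taylor expand $\Da{\rho} v_\eps(\xi) = \eps\,\D v(\eps\xi)\rho + O(\eps^2|\rho|^2\|\D^2 v\|_{L^\infty})$. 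After multiplication by $\eps^{d-2}$ the leading-order contribution becomes a Riemann sum that converges to
\begin{equation*}
  \int_\Om \sum_{\rho,\vsig\in\Rg} V_{\rho\vsig}(\bfO)\b[\D v\,\rho, \D v\,\vsig\b] \dx = \b\<\ddel\Ec(0) v, v\b\>,
\end{equation*}
where the last equality uses Lemma~\ref{th:defn_W}. The Taylor remainder terms carry extra powers of $\eps$ and vanish in the limit.

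The main technical obstacle is the infinite interaction range: the limit $\eps \to 0$ must be exchanged with the sum over $(\rho,\vsig)\in\Rg^2$. The standing decay hypothesis $M^{(2)} = \sum_{\bfrho\in\Rg^2} m(\bfrho) < \infty$ supplies the required $\eps$-uniform integrable majorant, since each term in the double sum is controlled, after the rescaling, by $m(\rho,\vsig)\,\|\D v\|_{L^\infty}^2$ times an $L^1$-factor coming from the support of $v$; the same estimate dominates the remainders. Dominated convergence then rigorously justifies the passage to the limit, both for the leading Riemann-sum term and for the Taylor corrections. Once the inequality is proved on $C^\infty_0$, density of $\{[v] \sep v \in C^\infty_0\}$ in $\WWh^{1,2}$ (\S~\ref{sec:homsob}) together with the continuity of the bilinear form $\b\<\ddel\Ec(0) \cdot, \cdot\b\>$ on $\WWh^{1,2}$ (Proposition~\ref{th:defn_Ec} with $j = 2$, $p_1 = p_2 = 2$) extends the bound to all $v \in \WWh^{1,2}$.
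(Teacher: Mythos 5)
Your argument is correct and follows essentially the same route as the paper: rescale a compactly supported smooth test function, sample it on the lattice, apply the atomistic stability \eqref{eq:defn_Lam} to the sampled function, pass to the limit on both sides of the inequality, and conclude by density. The only substantive difference is that you establish the convergence $\eps^{d-2}\b\<\ddel\Ea(0) v_\eps, v_\eps\b\> \to \b\<\ddel\Ec(0)v,v\b\>$ directly via Taylor expansion, Riemann sums and dominated convergence under $M^{(2)}<\infty$, whereas the paper delegates this step to \cite[Lemma 3.2]{Hudson:stab} together with an approximation argument for the infinite interaction range; your version is self-contained and equally valid (the one cosmetic blemish is that the gradient estimate $\D\barv_\eps(x)=\eps\,\D v(\eps x)+O(\eps^2)$ should be quoted as the standard first-order interpolation error bound rather than ``differentiated'' from the pointwise bound on $\barv_\eps - v(\eps\,\cdot\,)$).
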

\begin{proof}
  Fix $v \in \WWhz \setminus \{0\}$ and set $v_N(x) := N v(N^{-1} x)$
  for any $N \in \N$, and let $w_N := v_N|_\L$; then we have
  \begin{displaymath}
    \< \ddel\Ec(0) v, v \> = N^{-d} \< \ddel\Ec(0) v_N, v_N \>.
  \end{displaymath}
  Taking into account \cite[Remark 1.1.1]{Hudson:stab}, and using the
  fact that $v$ is smooth, Lemma 3.2 in \cite{Hudson:stab} yields
  \begin{displaymath}
    N^{-d} \b| \< \ddel\Ec(0) v_N, v_N \> - \< \ddel \Ea(0) w_N,
    w_N \> \b| \to 0 \quad \text{as } N \to \infty.
  \end{displaymath}
  We remark, that \cite[Lemma 3.2]{Hudson:stab} is formulated for
  finite-range interactions only, however, under the assumption that
  $M^{(2)}$ is finite a straightforward approximation argument extends it
  to the present case.

  Using the smoothness of $v$ it is also easy to see that $N^{-d/2} \|
  \D w_N \|_{L^2} \to \|\D v\|_{L^2}$. Hence, we obtain
  \begin{displaymath}
    \Lam \leq \frac{\<\ddel\Ea(0) w_N, w_N\>}{\| \D w_N \|_{L^2}^2}
    \overset{N \to \infty}{\longrightarrow}
    % = \lim_{N \to \infty} \frac{\< \ddel\Ec(0) v_N, v_N
    %   \>}{\| \D \barv_N \|_{L^2}^2} =
    \frac{ \< \ddel \Ec(0) v, v
      \>}{\|\D v\|_{L^2}^2}.
  \end{displaymath}
  Taking the infimum over all $v \in \WWhz$ yields the stated result.
\end{proof}

\begin{proof}[Proof of Proposition \ref{th:stab_small}]
  We first consider the atomistic case. A simple variation of the
  proof of \eqref{eq:bound_delj} with $j = 3$ (replacing $\|\D u
  \|_{L^\infty}$ with $\max_{\xi\in\L} \max_{\rho\in\Rg} |D_\rho
  u(\xi)|$) gives the Lipschitz bound
  \begin{displaymath}
    \b|\b\< (\ddel\Ea(u) - \ddel\Ea(0)) v, v \b\>| \leq c M^{(3)} \inv \| \D v \|_{L^2}^2 \qquad \forall v \in \Ush^{1,2},
  \end{displaymath}
  where $c$ is a generic positive constant. Hence, choosing $\inv \leq \gamma /
  (2 c M^{(3)})$ yields the atomistic stability result.

  After employing Lemma \ref{th:stab_hom}, the proof for the
  continuous case is analogous.
\end{proof}

\begin{remark}
  We have shown that stability of the atomistic model implies
  stability of the continuum model, using only pointwise convergence
  of the atomistic hessian to the continuum hessian (this is in fact a
  consequence of convergence of the energy) and scale-invariance of
  the continuum limit. Conversely, one can construct examples
  \cite{EM:cb_dynamic, Hudson:stab} where the continuum limit is
  stable (in 1D, convex) while the atomistic model is not stable in
  the sense of \eqref{eq:defn_Lam}. In this case, we would still
  expect that atomistic solutions of both the static and dynamic
  problem exist (in a suitable extended framework), however, we can no
  longer expect them to be ``close'' to the solutions of the
  Cauchy--Born equations. We give a more detailed discussion in
  Appendix \ref{sec:app_stab}, from which can conclude that
  \eqref{eq:defn_Lam} (or a similar assumption) is also {\em
    necessary} to obtain the results we seek.
\end{remark}

\subsection{Main result}
\label{sec:errsm:result}
We first restate the Cauchy--Born equation \eqref{eq:intro:var_c} at a
macroscopic scale
\begin{equation}
  \label{eq:errsm:cb_scale}
  X = \eps x, \quad U = \eps u, \quad \text{and} \quad F^\c = \eps^{-1} f^\c,
\end{equation}
where $X \in \R^d, U, F^\c : \R^d \to \R^d$, and $\eps$ is the atomic
spacing in the $X$-scale. In these {\em macroscopic variables}, the
Cauchy--Born equation \eqref{eq:intro:var_c_EL} reads, formally
\begin{equation}
  \label{eq:errsm:cb_eqn_scaled}
  - {\rm div}_X \Scb(\D_X U^\c) = F^\c,
\end{equation}
By assuming that $F^\c$ is small, more precisely, that
\begin{equation}
  \label{eq:Fc_small}
  \| F^\c \|_{\WWh^{-1,2}} + \| \D F^\c \|_{L^2} =: \delta
\end{equation}
is sufficiently small, we will be able to prove that there exists a
solution $U^\c$ to \eqref{eq:errsm:cb_eqn_scaled}. Reversing the
scaling \eqref{eq:errsm:cb_scale}, we obtain a solution $u^\c(x) :=
\eps^{-1} U^\c(\eps x)$ of the atomic scale equation
\eqref{eq:intro:var_c_EL}, with external force $f^\c(x) := \eps
F^\c(\eps x)$. We note that
\begin{equation}
 \label{eq:bnd_y_f}
  \begin{split}
    \| \D^3 u^\c \|_{\LL^2} +   \| \D^2 u^\c \|_{\LL^4}^2 =\,& \eps^{2-d/2}
    \b( \| \D_X^3 U^\c \|_{\LL^2} + \| \D_X^2 U^\c \|_{\LL^4}^2 \b),
  \end{split}
\end{equation}
which implies that the atomistic/Cauchy--Born modelling error in the
internal forces is of order $O(\eps^{2-d/2}$ (cf. Lemma
\ref{th:stress:moderr_negSob}). To ensure that the modelling error in
the external forces is of the same order of magnitude, we shall assume
that
\begin{equation}
  \label{eq:asm_err_extfrc}
  \b| (f^\c, v)_{\R^d} - (f^\a, v)_{\L} \b| \leq C_f \delta
  \eps^{2-d/2}.
\end{equation}
As a concrete example, we show in Lemma \ref{th:ex_frc} that, if $F^\c
\in \WWh^{-1,2} \cap W^{1,2}$, $f^\c$ is defined by
\eqref{eq:errsm:cb_scale}, and $f^\a$ is defined by $f^\a(\xi) := \int
\zz(x-\xi) f^\c(x) \dx$, then $f^\a \in \Ush^{-1,2}$ and
\eqref{eq:asm_err_extfrc} holds.

\begin{theorem}
  \label{th:errsm:mainthm}
  Let $d \leq 3, k \geq 4$, and suppose that $\mA\cdot\L$ is stable
  and that $\Ms^{(2, 2)}$ and $\Ms^{(3, 2)}$ are finite
  (cf. \S~\ref{sec:intro:V:decya}).

  There exist constants $\delta_0, \eps_0 > 0$ such that, for $F^\c
  \in \WWh^{-1,2} \cap W^{1,2}$ satisfying \eqref{eq:Fc_small}, $f^\c$
  defined by \eqref{eq:errsm:cb_scale} and $f^\a \in \Ush^{-1,2}$
  satisfying \eqref{eq:asm_err_extfrc}, and for $\delta \leq \delta_0$
  and $\eps \leq \eps_0$, there exist stable solutions $u^\c$ and
  $u^\a$ of, respectively, \eqref{eq:intro:var_c_EL} and
  \eqref{eq:intro:var_a_EL}, such that
  \begin{displaymath}
    \eps^{d/2} \| \D u^\c - \D \Itil u^\a \|_{L^2} \leq C \frac{\delta \eps^2}{\gamma},
  \end{displaymath}
  where $C = C_{f} C(\Ms^{(2,2)}/\Lam, \Ms^{(3,2)}/\Lam)$.
\end{theorem}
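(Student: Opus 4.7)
The proof proceeds by a Newton--Kantorovich (quantitative inverse function theorem) argument in three stages: construct the Cauchy--Born solution $u^\c$ with adequate regularity; show that a lattice mollification of $u^\c$ almost satisfies the atomistic Euler--Lagrange equation with residual of size $O(\delta\eps^{2-d/2})$; and use atomistic stability to correct this trial displacement to a true atomistic solution $u^\a$, preserving the error size.

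\medskip\noindent\emph{Stage 1 (existence and regularity of $u^\c$).} I apply the implicit function theorem to the map $u \mapsto \del\Ec(u) - f^\c$ at $u = 0$. By Proposition~\ref{th:stab_small}, $\ddel\Ec(0)$ is coercive on $\WWh^{1,2}$, hence invertible, and for $\delta$ sufficiently small this yields $u^\c \in \WWh^{1,2}\cap \Bc$ with $\|\D u^\c\|_{L^2} \lesssim \delta/\Lam$. The extra hypothesis $F^\c \in W^{1,2}$, combined with an elliptic-regularity bootstrap for the linearization $\ddel\Ec(u^\c)$ (whose coefficients are $C^{k-2}$ functions of $\D u^\c$), yields $u^\c \in \WWh^{3,2}$ with the quantitative bound that, after the scaling~\eqref{eq:errsm:cb_scale}, gives $\|\D^3 u^\c\|_{L^2} + \|\D^2 u^\c\|_{L^4}^2 \lesssim (\delta/\Lam)\eps^{2-d/2}$ as in~\eqref{eq:bnd_y_f}. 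The smallness requirement $\delta \leq \delta_0$ also ensures $\|\D u^\c\|_{L^\infty} \leq \inv$ via Sobolev embedding, placing $u^\c \in \Bc$.

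\medskip\noindent\emph{Stage 2 (consistency).} Define the trial lattice displacement $\hat u \in \Us$ by $\hat u(\xi) := (\zz\ast u^\c)(\xi)$. The proof of Lemma~\ref{th:stress:moderr_negSob} shows $\hat u \in \Ba$, and Proposition~\ref{th:Sa} together with Lemma~\ref{th:stress:moderr_negSob} (with $p=2$) gives
\[
  \bigl|\b\<\del\Ea(\hat u), \tilv \b\> - \b\< \del\Ec(u^\c), \barv \b\>\bigr|
  \lesssim \bigl(\Ms^{(2,2)} \|\D^3 u^\c\|_{L^2} + \Ms^{(3,2)} \|\D^2 u^\c\|_{L^4}^2\bigr)\|\D\barv\|_{L^2}
\]
for all $v \in \Usz$. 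Using the Cauchy--Born EL equation $\<\del\Ec(u^\c), \barv\> = (f^\c, \barv)_{\R^d}$ and the force compatibility hypothesis~\eqref{eq:asm_err_extfrc}, we obtain the residual estimate
\[
  \bigl|\b\<\del\Ea(\hat u), \tilv \b\> - (f^\a, v)_\L\bigr| \lesssim (C_f + C(\Ms^{(2,2)},\Ms^{(3,2)})/\Lam)\,\delta\eps^{2-d/2}\,\|\D\barv\|_{L^2}.
\]

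\medskip\noindent\emph{Stage 3 (correction).} Two adjustments are needed before invoking a fixed-point argument. First, we must replace $\tilv$ on the left-hand side by $v$ to match the form of~\eqref{eq:intro:var_a_EL}; this is done by estimating $\<\del\Ea(\hat u), \tilv - v\>$ through Proposition~\ref{th:Sa} and the convolution-error bound $\|\D(\tilv - \barv)\|_{L^2} = O(\|\D^2 \barv\|_{L^2})$, which is controlled using that $\barv$ is piecewise smooth on an $\eps$-independent lattice. Second, we set up a fixed-point iteration $w \mapsto T(w)$ on an $\Ush^{1,2}$-ball around $0$, where $T(w)$ solves the linearized atomistic equation $\<\ddel\Ea(\hat u) T(w), v\> = (f^\a, v)_\L - \<\del\Ea(\hat u+w), v\> + \<\ddel\Ea(\hat u) w, v\>$. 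By Proposition~\ref{th:stab_small}, $\ddel\Ea(\hat u)$ is coercive with constant $\Lam/2$, hence invertible with operator norm $\leq 2/\Lam$; the Lipschitz constant of $T$ on a small ball is bounded by $cM^{(3)}/\Lam$ times the radius. The residual bound from Stage~2 gives $\|T(0)\|_{\Ush^{1,2}} \lesssim \delta\eps^{2-d/2}/\Lam$, and Banach's theorem produces a fixed point $w$ with $\|\D\barw\|_{L^2} \lesssim \delta\eps^{2-d/2}/\Lam$; then $u^\a := \hat u + w \in \Ush^{1,2}\cap\Ba$ solves~\eqref{eq:intro:var_a_EL} and is stable by construction. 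A triangle inequality between $\Itil u^\a$, $\Itil\hat u$, and $u^\c$, combined with standard $\zz$-convolution approximation bounds and~\eqref{eq:D_stab_Itil}, and multiplication by $\eps^{d/2}$ to pass to the macroscopic scale, yields the claimed $O(\eps^2)$ estimate.

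\medskip\noindent\emph{Main obstacle.} The principal technical difficulty is to carry both $\eps$ and the small-data parameter $\delta$ simultaneously through the proof while preserving the sharp $\eps^2$ order in the macroscopic (rescaled) estimate. Two subtle points demand care: reconciliation of the two different weak forms (testing with $v$ versus $\tilv$) that arise in the atomistic variational equation and in the stress representation; and the need to control the Lipschitz constant of the nonlinear map on a ball that is itself shrinking with $\eps$, which forces us to exploit both the finiteness of $M^{(3)}$ and the uniform stability constant $\Lam/2$.
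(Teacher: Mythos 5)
Your overall architecture -- construct $U^\c$ in $\WWh^{3,2}$ by a quantitative inverse/implicit function theorem, rescale, take the trial lattice displacement $\zz\ast u^\c|_\L$, bound the residual via Lemma~\ref{th:stress:moderr_negSob} and \eqref{eq:asm_err_extfrc}, and correct by a contraction built on the coercivity $\smfrac12\Lam$ from Proposition~\ref{th:stab_small} -- is exactly the paper's proof, and Stages~1 and~2 and the final triangle inequality are sound. However, the first ``adjustment'' in Stage~3 is a step that would fail. You propose to convert the residual bound, which you naturally obtain against test functions of the form $\tilv$, into a bound against $v$ by estimating $\<\del\Ea(\hat u),\tilv - v\>$ using $\|\D(\tilv-\barv)\|_{L^2}=O(\|\D^2\barv\|_{L^2})$. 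First, $\barv$ is only piecewise multilinear, so $\D^2\barv$ is not in $L^2$; interpreting the bound element-wise and using an inverse estimate on the unit lattice only yields $\|\D(\tilv-\barv)\|_{L^2}\lesssim\|\D\barv\|_{L^2}$, with no smallness. Second, and more fatally, the functional $\del\Ea(\hat u)$ you are pairing against has $\Ush^{-1,2}$-norm of order $\|f^\a\|_{\Ush^{-1,2}}\sim\delta\eps^{-d/2}$ at the atomic scale (the external force is not small in this norm; only its \emph{gradient} is), so $\<\del\Ea(\hat u),\tilv-v\>$ is generically of size $\delta\eps^{-d/2}\|\D\barv\|_{L^2}$, which is larger than the target residual $\delta\eps^{2-d/2}\|\D\barv\|_{L^2}$ by a factor $\eps^{-2}$. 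This single step would destroy the second-order consistency.

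The repair is available in the framework and is what the paper does: do not change the test function at all. The map $v\mapsto\tilv|_\L$ is surjective onto $\Ush^{1,2}$ modulo constants (Lemma~\ref{th:interp:Itil}) and satisfies the two-sided norm equivalence \eqref{eq:D_stab_Itil}, so a bound of the form $|\<\iF(0),\tilv\>|\leq \eta\,\|\D\barv\|_{L^2}$ for all $v\in\Usz$ \emph{already} bounds the dual norm $\|\iF(0)\|_{\Ush^{-1,2}}$ up to a universal constant; the ``reconciliation of the two weak forms'' you flag as the main obstacle is resolved by a change of variables in the supremum defining the dual norm, not by perturbing the test function. With that substitution your Stage~3 contraction goes through and the rest of the argument is the paper's. (Minor remark: in Stage~1 the paper avoids the bootstrap by applying the inverse function theorem directly with $\iX=\WWh^{3,2}$ and $\iY=\WWh^{1,2}\cap\WWh^{-1,2}$, using the isomorphism of $\ddel\Ec(0)$ between these spaces; your bootstrap is an acceptable alternative provided you justify the regularity of the coefficients $\D^2W(\D u^\c)$.)
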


\begin{remark}
  1. Formally, Theorem \ref{th:errsm:mainthm} states that, if the
  external forces are sufficiently small and of a ``macroscopic
  nature'' (encoded in the assumption that $f^\c(x) = \eps F^\c(\eps
  x)$, which implies that $\D f^\a \approx \D f^\c = O(\eps^2)$), then
  the atomistic solution may be approximated to second-order accuracy
  by a solution of the Cauchy--Born model.

  2. The conclusion of Theorem \ref{th:errsm:mainthm} may also be
  stated as
  \begin{displaymath}
    \| \D u^\c - \D \Itil u^\a \|_{L^2} \lesssim
    C' \smfrac{\delta}{\gamma} \b( \Ms^{(2,2)} \| \D^3
    u^\c \|_{L^2} + \Ms^{(3,2)} \| \D^2 u^\c \|_{L^4}^2 \b),
  \end{displaymath}
  where $C' = C'(\delta_0)$.  In macroscopic units, with $U^\a_\eps(X)
  := \eps \Itil u^\a(\eps^{-1} X)$ the estimate reads
  \begin{displaymath}
    \| \D U^\c - \D U^\a_\eps \|_{L^2} \leq C' \smfrac{\delta}{\gamma}
    \eps^2 \b( \Ms^{(2,2)} \| \D^3
    U^\c \|_{L^2} + \Ms^{(3,2)} \| \D^2 U^\c \|_{L^4}^2 \b).
  \end{displaymath}
  On the right-hand sides of both of these estimates we may also
  replace $u^\c$ with $\Itil u^\a$, respectively $U^\c$ with $U^\a$,
  which effectively turns them into {\it a priori} error estimates.

  3. The factor $\delta$ in these estimates shows that the error is
  $O(\eps^2)$ {\em relative} to the magnitude of the external force
  and hence the displacement; that is, our estimates are in fact
  relative error estimates.
\end{remark}

\medskip \noindent The proof of this result uses a quantitative
version of the inverse function theorem. The following version is
taken from \cite[Thm. 2.1]{ortner_apostex}.

\def\iF{\mathscr{F}}
\def\iX{\mathscr{A}}
\def\iY{\mathscr{B}}
\def\iA{\mathscr{O}}
\begin{lemma}[Inverse Function Theorem]
  \label{th:errsm:inverse_fcn_thm}
  Let $\iX, \iY$ be Banach spaces, $\iA$ an open subset of $\iX$, and
  let $\iF : \iA \to \iY$ be Fr\'{e}chet differentiable. Suppose also
  that there exist $\eta, \sigma > 0$ and a monotone function $\omega
  : [0, +\infty) \to [0, +\infty]$ such that
  \begin{align*}
    & \overline{B_{\iX}(0, 2\eta\sigma)} \subset \iA, \quad  \| \iF(0) \|_{\iY} \leq \eta, \quad
    \| \del\iF(0)^{-1} \|_{L(\iY,\iX)} \leq \sigma, \\
    & \| \iF'(U) - \iF'(V) \|_{L(\iX,\iY)} \leq \omega\b( \|U - V
    \|_{\iX} \b)  \quad
    \text{for} \quad \|U\|_{\iX}, \|V\|_\iX \leq 2 \eta \sigma, \\
    & 2\sigma \omega(2\eta\sigma) \leq 1, \quad
    \text{and} \quad \sigma \omega(2\eta\sigma) < 1.
  \end{align*}
  Then, there exists a unique $U \in \iX$ such that $\iF(U) = 0$ and
  $\|U\|_{\iX} \leq 2\eta\sigma$.
\end{lemma}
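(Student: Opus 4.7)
The plan is to reduce the statement to the Banach contraction mapping theorem applied to the Newton-type iteration map $T : \overline{B_\iX(0, 2\eta\sigma)} \to \iX$ defined by
\[
T(U) := U - \del\iF(0)^{-1}\,\iF(U).
\]
Set $B := \overline{B_\iX(0, 2\eta\sigma)}$; by hypothesis $B \subset \iA$, and $\del\iF(0)$ is invertible with $\|\del\iF(0)^{-1}\|_{L(\iY,\iX)} \leq \sigma$, so $T$ is well defined and Fr\'echet differentiable on $B$, and its fixed points in $B$ coincide exactly with the zeros of $\iF$ in $B$. Hence it suffices to verify that $T$ is a strict contraction mapping $B$ into itself.

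For the contraction estimate, I would differentiate $T$ to obtain
\[
T'(U) = I_\iX - \del\iF(0)^{-1}\iF'(U) = \del\iF(0)^{-1}\b[\iF'(0) - \iF'(U)\b].
\]
The Lipschitz hypothesis on $\iF'$ with modulus $\omega$, together with the smallness assumption $2\sigma\omega(2\eta\sigma) \leq 1$, yield, for every $U \in B$,
\[
\|T'(U)\|_{L(\iX,\iX)} \leq \sigma\,\|\iF'(0) - \iF'(U)\|_{L(\iX,\iY)} \leq \sigma\,\omega(\|U\|_\iX) \leq \sigma\,\omega(2\eta\sigma) \leq \smfrac12.
\]
The mean value inequality for Fr\'echet-differentiable Banach-space maps then gives $\|T(U_1)-T(U_2)\|_\iX \leq \smfrac12 \|U_1-U_2\|_\iX$ for all $U_1,U_2\in B$. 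For the self-mapping property, combine this with $\|T(0)\|_\iX = \|\del\iF(0)^{-1}\iF(0)\|_\iX \leq \sigma\eta$ to conclude
\[
\|T(U)\|_\iX \leq \|T(U)-T(0)\|_\iX + \|T(0)\|_\iX \leq \smfrac12 \|U\|_\iX + \sigma\eta \leq \eta\sigma + \eta\sigma = 2\eta\sigma,
\]
so $T(B) \subset B$.

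Since $B$ is a closed subset of the Banach space $\iX$, it is itself complete, and Banach's fixed point theorem delivers a unique $U \in B$ with $T(U)=U$, i.e., $\iF(U)=0$ and $\|U\|_\iX \leq 2\eta\sigma$. I do not expect any real obstacle here; the result is essentially classical, and the only piece of bookkeeping concerns the seemingly double hypothesis $2\sigma\omega(2\eta\sigma)\leq 1$ and $\sigma\omega(2\eta\sigma)<1$. The former is the quantitative input that fixes the contraction constant at $\smfrac12$ in both estimates above; the latter, strict inequality guarantees via a Neumann series expansion that $\iF'(U) = \del\iF(0)\b[I_\iX - T'(U)\b]^{-1}$-factor is invertible at every $U\in B$, so that $\iF$ is in fact a local diffeomorphism of $B$ onto its image — a property often needed in applications of the lemma, though not strictly used in the existence/uniqueness statement above.
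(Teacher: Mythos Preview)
Your argument is correct: the Newton-type fixed-point map $T(U) = U - \del\iF(0)^{-1}\iF(U)$, combined with the bound $\|T'(U)\| \leq \sigma\,\omega(2\eta\sigma) \leq \tfrac12$ and the estimate $\|T(0)\| \leq \sigma\eta$, yields both the contraction and the self-mapping properties on the closed ball of radius $2\eta\sigma$, and Banach's fixed-point theorem finishes the job. Your observation that the strict inequality $\sigma\omega(2\eta\sigma) < 1$ is redundant (being implied by $2\sigma\omega(2\eta\sigma) \leq 1$) and not needed for existence/uniqueness is also accurate.

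The paper, however, does not give a direct proof at all: it simply invokes \cite[Thm.~2.1]{ortner_apostex} and records the parameter substitutions ($\sigma \leftrightarrow \sigma^{-1}$, $\bar\omega(t) = t\,\omega(t)$, $R = 2\eta\sigma$) needed to match the hypotheses there. Your self-contained contraction-mapping argument is the standard route to such quantitative inverse function theorems and is almost certainly what underlies the cited result; the advantage of your version is that the reader does not need access to the external reference, while the paper's citation keeps the exposition short at the cost of a black box.
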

\begin{proof}
  The result follows from \cite[Thm. 2.1]{ortner_apostex}, upon
  replacing $\sigma$ with $\sigma^{-1}$, taking $\bar{\omega}(t) = t
  \omega(t)$ (admissible since $\omega$ is monotone), and $R = 2\eta
  \sigma$.
\end{proof}

\begin{proof}[Proof of Theorem \ref{th:errsm:mainthm}]
  {\it Part 1: Existence of a Cauchy--Born solution: }
  Since $\mA\cdot\L$ is stable, Lemma \ref{th:stab_small} implies that
  $\ddel\Ec(0)$ is positive definite, which is equivalent to the
  statement that $\D^2 W(\mO)$ satisfies the strong Legendre--Hadamard
  condition. Under this condition it is proven in
  \cite{OrSu:homsob:2012} that $\ddel\Ec(0) : \WWh^{3,2} \to
  \WWh^{1,2} \cap \WWh^{-1,2}$ is an isomorphism. Hence, we can hope
  to apply Lemma \ref{th:errsm:inverse_fcn_thm} with
  \begin{align*}
    & \iX :=\WWh^{3,2}, \quad \iY :=  \WWh^{1,2} \cap \WWh^{-1,2},  \\
    & \iF(U) := \del\Ec(U) - F^\c, \quad \text{and} \quad \iA :=
    \{U \in \iX \sep \|U\|_{\WWh^{3,2}} \leq \delta \}.
  \end{align*}
  Since $d \leq 3$, $\iX$ is embedded in $\WWh^{1,\infty}$ and hence,
  for $\delta$ sufficiently small, we have $\iA \subset \Bc$. Hence,
  \eqref{eq:bound_delj_Ec} implies that, for $U, V \in \iA$,
  \begin{displaymath}
    \b\| \iF(U+V) - \iF(U) - \ddel\Ec(U)[V,\;\cdot\;] \|_{\WWh^{-1,2}} \leq
    M^{(3)} \| \D V \|_{L^2}^2 \leq M^{(3)} \| V \|_{\iX}^2.
  \end{displaymath}
  A tedious but straightforward computation also shows that
  \begin{align*}
    & \b\| \iF(U+V) - \iF(U) - \ddel\Ec(U)[V,\;\cdot\;] \|_{\WWh^{1,2}} \\
    =\, & \b\| \D {\rm div} \b[ \D W(\D U+\D V) - \D W(\D U) - \D^2 W(\D U)
    : \D V \b] \b\|_{L^2} \\
    \lesssim\,& o_1(\| V \|_{\iX}) \qquad \text{for } U \in \iA, \text{ and for } \| V \|_{\iX} \text{
      sufficiently small,}
  \end{align*}
  where $o_1(t) \ll t$ as $t \to 0$. (The function $o_1(t)$ depends on
  $M^{(3)}$ and $M^{(4)}$ and on the modulus of continuity of $\D^4 W$ in
  the set $\{ \mF \sep |\mF| \leq \inv \}$; if $k = 5$, then $o_1(t) =
  \sum_{j = 3}^5 M^{(j)} t^2$.) This shows that $\iF$ is Fr\'echet
  differentiable and $\del\iF(U) = \ddel\Ec(U)$.

  Similarly, we can also show that
  \begin{displaymath}
    \| \del\iF(U) - \del\iF(U') \|_{L(\iX, \iY)} \leq o_0( \| U - U' \|_{\iX}),
  \end{displaymath}
  where $o_0(t) \to 0$ as $t \to 0$. (In fact, $o_0(t) = o_1(t)/t$.)

  We have in particular established that $\del\iF(0) =
  \ddel\Ec(0)$, which we already know to be an isomorphism from
  $\iX$ to $\iY$. Moreover, by assumption we have
  \begin{displaymath}
    \| \iF(0) \|_{\iY}
    = \| F^\c \|_{\WWh^{-1,2}\cap \WWh^{1,2}} \leq \delta.
  \end{displaymath}
  Hence, Lemma \ref{th:errsm:inverse_fcn_thm} guarantees that, for
  $\delta$ sufficiently small, there exists $U^\c \in \WWh^{1,2} \cap
  \WWh^{3,2}$ satisfying \eqref{eq:errsm:cb_eqn_scaled} in the strong
  sense, and
  \begin{equation}
    \label{eq:a_priori_bound_Uc}
    \| U^\c \|_{\iX} \leq c \delta / \gamma,
  \end{equation}
  where $c$ is a generic constant. (The factor $1/\gamma$ is due to
  the fact that $\| \del\iF(0)^{-1}\|_{L(\iY, \iX)} \lesssim
  1/\gamma$.)

  Let $u^\c(x) := \eps^{-1} U^\c(\eps x)$, then the arguments given
  before the statement of the theorem, and Proposition
  \ref{th:stab_small}, show that $u^\c$ is a stable solution of
  \eqref{eq:intro:var_c_EL} with $f^\c$ given by
  \eqref{eq:errsm:cb_scale}.

  Upon noting that $\| \D^3 U^\c \|_{\LL^2} + \|\D^2 U^\c \|_{\LL^4}^2
  \lesssim \| U^\c \|_{\iX} \lesssim \delta/\gamma$, and that $\|\D
  U^\c \|_{L^\infty} \lesssim \| U^\c \|_{\iX} \lesssim
  \delta/\gamma$, we obtain the bounds
  \begin{equation}
    \label{eq:minthm:DYc_bnd}
    \| \D^3 u^\c \|_{L^2} + \| \D^2 u^\c \|_{L^4}^2
    \lesssim \eps^{2-d/2}\delta/\gamma, \quad \text{and} \quad
    \| \D u^\c \|_{L^\infty} \lesssim \delta/\gamma.
  \end{equation}
  % We also note that there exists a constant $C$ such that
  % \begin{equation}
  %
  %   \| \D u^\c_\eps - \mA \|_{L^\infty} = \| \D u^\c - \mA
  %   \|_{L^\infty} \leq C \delta.
  % \end{equation}

  {\it Part 2: existence of an atomistic solution. } Recall the
  definition of $\tilde{u}^\c := \zz \ast u^\c$ from
  Lemma~\ref{th:stress:moderr_negSob}. We apply Lemma
  \ref{th:errsm:inverse_fcn_thm} with $\iX = \Ush^{1,2}$, $\iY =
  \Ush^{-1,2}$, $\iA := \{ w \in \Ush^{1,2} \sep \| \D \barw \|_{L^2}
  < \delta_1 \}$ for some constant $\delta_1 > 0$ that remains to be
  chosen. If $\delta_1$ is chosen sufficiently small, then
  $\tilde{u}^\c|_\L + \iA \subset \Ush^{1,2} \cap \Ba$, hence we can
  define
  \begin{displaymath}
    \b\<\iF(w), v \b\> := \b\< \del\Ea(\tilde{u}^\c + w), v \b\> - (f^\a,
    v)_{\L} \qquad \text{for } w \in \iA, \quad v \in \Ush^{1,2}.
  \end{displaymath}
  By Theorem \ref{th:defn_Ea}, $\iF$ is Fr\'echet differentiable in
  $\iA$, and $\del\iF$ is Lipschitz continuous in $\iA$, that is we
  can choose $\omega(t) = c M^{(3)} t$ in Lemma
  \ref{th:errsm:inverse_fcn_thm}.

  To obtain a stability estimate, we use \eqref{eq:minthm:DYc_bnd} and
  Proposition \ref{th:stab_small} to deduce that, if $\delta$ and
  $\delta_1$ are chosen sufficiently small, then
  \begin{displaymath}
    \b\< \ddel\Ea(\tilu^\c) v, v \> \geq \smfrac12 \Lam \| \D
    \barv \|_{L^2}^2 \qquad \forall v \in \Ush^{1,2},
  \end{displaymath}
  that is, $ \| \del\iF(0)^{-1} \|_{L(\iY, \iX)} \leq ( \smfrac12 \Lam
  \b)^{-1} =: \sigma$.

  To obtain a residual bound, we apply Lemmas
  \ref{th:stress:moderr_negSob} and \ref{th:ex_frc}, to estimate
  \begin{align*}
    \b\< \iF(0), \tilv \b\> =\,& \b\< \del\Ea(\tilu^\c), \tilv \b\> -
    (f^\a, \tilv)_{\L} \\
    =\,& \B\{\b\< \del\Ea(\tilu^\c), \tilv \b\> - \b\< \del\Ec(u^\c),
    \barv \b\> \B\}
    - \B\{ (f^\a, \tilv)_\L - (f^\c, \barv)_{\Om} \B\} \\
    \lesssim\,& \b( \Ms^{(2,2)} \| \D^3 u^\c \|_{L^2} + \Ms^{(3,2)} \| \D^2 u^\c
    \|_{L^4}^2 + \| \D f^\c \|_{L^2} \b) \| \D \barv\|_{L^2}.
  \end{align*}
  that is,
  \begin{displaymath}
    \|\iF(0) \|_{\iY} \leq \eta := C\, \b[1 + \gamma^{-1}
    (\Ms^{(2,2)} + \Ms^{(3,2)}) \b] \, \delta
    \eps^{2-d/2}.
  \end{displaymath}

  Lemma \ref{th:errsm:inverse_fcn_thm} states that, if
  \begin{equation}
    \label{eq:errsm:atm_ex_condition}
    C \frac{M^{(3)}}{\gamma} \bg( 1 + \frac{\Ms^{(2,2)} +
      \Ms^{(3,2)}}{\gamma} \bg) \frac{\delta}{\gamma} \eps^{2-d/2} < 1,
  \end{equation}
  then there exists a locally unique solution $v$ of $\iF(v) =
  0$. This can be guaranteed provided that $\delta \eps^{2-d/2}/\gamma
  < \eps_0 = \eps_0(\Ms^{(2,2)}/\gamma, \Ms^{(3,2)}/\gamma)$. (Recall that $M^{(3)}
  \leq \Ms^{(3,2)}$.)

  Let $w^\c := \tilu^\c|_\L$. Setting $u^\a(\xi) := w^\c(\xi) +
  v(\xi)$, and applying \eqref{eq:D_stab_Itil} we obtain the estimate
  \begin{displaymath}
    \| \D \baru^\a - \D \barw^\c \|_{L^2} \leq 2 \sigma \eta
    \lesssim \b[1 + \smfrac{1}{\Lam}
    (\Ms^{(2,2)} + \Ms^{(3,2)}) \b] \smfrac{\delta}{\Lam} \eps^{2-d/2}.
  \end{displaymath}
  Applying the interpolation error estimate given in Corollary
  \ref{th:interp_error} and \eqref{eq:D_stab_Itil}, we obtain
  \begin{align*}
    \| \D u^\c - \D \Itil u^\a \|_{L^2} \leq\,&
    \| \D \Itil(w^\c - u^\a) \|_{L^2} + \| \D u^\c - \D
    \Itil w^\c \|_{L^2} \\
    \lesssim\,& \| \D (u^\c - u^\a) \|_{L^2} + \| \D^3 u^\c \|_{L^2}
    \\
    \lesssim\,& \b[1 + \smfrac{1}{\Lam}
    (\Ms^{(2,2)} + \Ms^{(3,2)}) \b] \smfrac{\delta}{\Lam} \eps^{2-d/2}.
  \end{align*}
  This concludes the proof of the theorem.
\end{proof}

\section{Convergence to solutions of the Wave Equation}
\label{sec:wave}
In this section we consider the dynamic problem
\begin{equation}
  \label{eq:w:at}
  \begin{split}
    & (\ddot u^\a(t), v)_\L + \< \del\Ea(u^\a(t)), v\> = 0 \qquad \forall v \in
    \Usz, \quad t > 0, \\
    & u^\a(0) = u^\a_0, \quad \dot{u}^\a(0) = u^\a_1.
  \end{split}
\end{equation}
We will prove that, if the initial condition is
``macroscopic'', then there exists a unique solution to
\eqref{eq:w:at}, which remains close to a solution of the
corresponding Cauchy--Born wave equation for a ``macroscopic time
interval''.

For simplicity, we do not consider external forces in the the dynamic
problem.

\subsection{The macroscopic wave equation}
\label{sec:wave:cb}
Formally, the continuum limit of \eqref{eq:w:at} is the Cauchy--Born
wave equation
\begin{displaymath}
  \ddot u^\c(t) - {\rm div} \Scb(\D u^\c(t)) =0,
\end{displaymath}
subject to initial conditions. Upon rescaling
\begin{equation}
  \label{eq:w:cb_scale}
  X := \eps x,\quad U := \eps u, \quad T := \eps t,
\end{equation}
we formally obtain
\begin{equation}
  \label{eq:w:macro}
  \smfrac{{\rm d}^2}{{\rm d} T^2} U^\c(T) - {\rm div}_X \Scb\D_X U^\c(T)) =\,0,
\end{equation}
which we supply with the initial condition
\begin{equation}
  \label{eq:w:macro_init}
  U^\c(0) = U_0^\c, \quad \text{and} \quad
  \D_T U^\c(0) = U_1^\c.
\end{equation}

To establish well-posedness of \eqref{eq:w:macro},
\eqref{eq:w:macro_init}, we apply the well-established theory.  In our
context, Theorem III in \cite{HuKaMa} reads as follows. Note that,
from here on, we employ again the standard Sobolev spaces instead of
homogeneous Sobolev spaces.

\begin{proposition}
  \label{th:HuKaMa}
  Let $d \in \{1,2,3\}$, $k \geq 4$, and suppose that $\mA \cdot \L$
  is stable. Let $U_0^\c \in W^{4,2}, U_1^\c \in W^{3,2}$ with $\| \D
  U_0^\c \|_{L^\infty} < \kappa$, then there exists $T^\c > 0$ such
  that the system \eqref{eq:w:macro}, \eqref{eq:w:macro_init} has a
  unique solution $U^\c \in C^2([0, T^\c]; W^{2,2}) \cap C^1([0,
  T^\c]; W^{3,2}) \cap C([0, T^\c]; W^{4,2})$, satisfying $\max_{T \in
    [0, T^\c]} \| \D U^\c(T)\|_{L^\infty} < \inv$.
\end{proposition}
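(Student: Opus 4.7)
The plan is to verify the hypotheses of the standard local well-posedness theorem for quasilinear second-order hyperbolic systems (Theorem III of \cite{HuKaMa}) for the Cauchy--Born wave equation \eqref{eq:w:macro}. Written componentwise, the system reads $\partial_T^2 U^\c_i - \partial_{X_\alpha}\big(\Scb_{i\alpha}(\D_X U^\c)\big) = 0$, i.e.\ $\partial_T^2 U^\c_i - A_{i\alpha j\beta}(\D_X U^\c)\,\partial_{X_\alpha}\partial_{X_\beta} U^\c_j = 0$ with coefficients $A_{i\alpha j\beta}(\mF) := \partial^2 W(\mF)/\partial \mF_{i\alpha}\partial \mF_{j\beta}$. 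The regularity needed for the coefficient map is provided by Lemma~\ref{th:defn_W}, which gives $W \in C^k$ on $\{|\mF| \leq \kappa\}$; since $k \geq 4$, the map $\mF \mapsto A(\mF)$ is at least $C^2$, which exceeds the smoothness needed to handle $W^{4,2}$ initial data in dimensions $d \leq 3$ (where $W^{4,2} \hookrightarrow C^{2,\alpha}$).

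The crucial algebraic hypothesis is the strong Legendre--Hadamard condition on $A(\D U_0^\c)$. First I would verify it at $\mF = 0$: combining stability of $\mA\cdot\L$, \eqref{eq:defn_Lam}, with Lemma~\ref{th:stab_hom} yields $\<\ddel\Ec(0) v, v\> \geq \Lam \|\D v\|_{L^2}^2$ for all $v \in \WWh^{1,2}$. Since $\ddel\Ec(0)$ is a constant-coefficient quadratic form with symbol $A(0)$, a Plancherel/plane-wave test argument (inserting $v_\eps(x) = \eps\,\eta(x)\cos(k\cdot x/\eps)\xi$ with smooth cutoff $\eta$ and letting $\eps \to 0$) extracts the pointwise inequality $A(0)[(\xi\otimes k),(\xi\otimes k)] \geq \Lam |\xi|^2|k|^2$ for all $\xi,k \in \R^d$. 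By continuity of $\D^2 W$ (Lemma~\ref{th:defn_W}) and the $L^\infty$-bound $\|\D U_0^\c\|_{L^\infty} < \kappa$, choosing $\kappa$ small enough (the same $\kappa$ used in Proposition~\ref{th:stab_small}) guarantees $A(\D U_0^\c(X))[(\xi\otimes k),(\xi\otimes k)] \geq \smfrac12\Lam|\xi|^2|k|^2$ uniformly in $X$, which is the strong Legendre--Hadamard condition at the initial data.

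With smoothness and ellipticity in hand, Theorem III of \cite{HuKaMa} supplies a time $T_0 > 0$ and a unique solution $U^\c \in C^0([0,T_0];W^{4,2}) \cap C^1([0,T_0];W^{3,2}) \cap C^2([0,T_0];W^{2,2})$ to \eqref{eq:w:macro}--\eqref{eq:w:macro_init}; here I am using $W^{3,2}$ rather than $W^{2,2}$ regularity of $U_1^\c$, which provides the matching one-derivative-lower space for the time derivative in the standard energy hierarchy for quasilinear wave equations. Finally I would shrink $T_0$ to $T^\c \in (0, T_0]$ to enforce the pointwise constraint. Since $d \leq 3$, Sobolev embedding gives $W^{4,2} \hookrightarrow W^{1,\infty}$ with continuous (in fact $C^1$) dependence of $\|\D U^\c(T)\|_{L^\infty}$ on $T$; because $\|\D U_0^\c\|_{L^\infty} < \kappa$ is a strict inequality, a standard continuity argument produces $T^\c>0$ with $\max_{T \in [0,T^\c]}\|\D U^\c(T)\|_{L^\infty} < \kappa$.

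The main obstacle is the passage from the integrated coercivity $\<\ddel\Ec(0) v,v\> \geq \Lam \|\D v\|_{L^2}^2$ to a pointwise Legendre--Hadamard inequality for $A(0)$, and the uniform extension of this inequality to $A(\D U_0^\c(X))$. The second step is routine once $\kappa$ is fixed small; the first is the familiar but slightly delicate plane-wave localization argument, which is clean because $\ddel\Ec(0)$ has constant coefficients.
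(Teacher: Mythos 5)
Your proposal is correct and follows essentially the same route as the paper: both verify the hypotheses of Theorem III in \cite{HuKaMa}, obtain the strong Legendre--Hadamard condition for $\D^2 W$ on $\{|\mF|\leq\kappa\}$ from lattice stability (the paper cites Proposition~\ref{th:stab_small} directly, which already contains the plane-wave localization you spell out explicitly), and then shrink the existence time using the embedding $W^{3,2}\hookrightarrow W^{1,\infty}$ and the strictness of $\|\D U_0^\c\|_{L^\infty}<\kappa$ to preserve the pointwise constraint.
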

\begin{proof}
  The symbol $\Omega$, and the conditions (a1), (a2), (a3), (3.1),
  (3.2) in this proof refer to \cite{HuKaMa}.

  In the notation of  \cite{HuKaMa}, \eqref{eq:w:macro} reads
  \begin{displaymath}
    a_{00} \frac{\pp^2 U}{\pp T^2} = \sum_{\alpha,\beta = 1}^d a_{\alpha\beta}
   \frac{\pp^2 U}{\pp X_\alpha \pp X_\beta} + b,
  \end{displaymath}
  where $a_{00} = I, a_{\alpha\beta} = (\bbC_{i\alpha}^{j\beta}(\D
  U))_{i,j = 1}^d$, where $\bbC(\mF) = \D_\mF^2 W(\mF)$, $b_i =
  \sum_{\alpha, \beta, j = 1}^d \frac{\pp \bbC_{i\alpha}^{j\beta}}{\pp
    X_\alpha} \frac{\pp U_j}{\pp X_\beta}$, and $a_{0,i} = a_{i,0} =
  0$, for $i = 1, \dots, d$. Condition (a1) is trivially satisfied and
  condition (a2) follows from the fact that $W \in C^2$. Condition
  (a3) is the Legendre--Hadamard condition for $\bbC(\mF)$, which we
  know from Proposition \ref{th:stab_small} to be satisfied for $|\mF|
  \leq \kappa$. Hence, choosing $\Omega = \R^d \times \R^d \times
  B_{\R^{d\times d}}(\mO, \kappa)$ we obtain (a3).

  Condition (3.1) is satisfied for $s = 3$. Condition (3.2) is
  satisfied since we assumed that $\| \D_X U_0 \|_{L^\infty} <
  \inv$. The requirement that $s \geq d/2+1$ holds since we
  have restricted $d \leq 3$. This ensures the existence of a solution
  with the stated regularity.

  Since $U^\c \in C^1([0, T^\c]; W^{3,2}) \subset C^1([0, T^\c];
  W^{1,\infty})$ and since $\|\D U^\c(0)\|_{L^\infty} < \kappa$ it
  follows that $\| \D U^\c(T) \|_{L^\infty} \leq \kappa$ for
  sufficiently short time. Thus choosing $T^\c$ sufficiently small, we
  obtain that $\max_{T \in [0, T^\c]} \| \D U^\c(T)\|_{L^\infty} <
  \inv$.
\end{proof}

Upon reverting the scaling \eqref{eq:w:cb_scale}, we obtain the
existence of a trajectory $u^\c(x, t) := \eps^{-1}U^\c(\eps x, \eps
t)$, defined for $x \in \R^d, t \in [0, t^\c]$, where $t^\c := T^\c
/ \eps$, satisfying the following conditions:
\begin{align}
  \label{eq:w:cb}
  &(\ddot u^\c, u)_\Om + \< \del\Ec(u^\c), u\> = 0 \qquad \forall\, u
  \in W^{1,2}, \quad t \in (0, t^\c], \\
  \label{eq:w:micro_init}
  &u^\c(x,0) = \eps^{-1} U_0^\c(\eps x),\quad \dot{u}^\c(x, 0) =
  U_1^\c(\eps x), \qquad \text{and}\\
  \label{eq:w:cb_reg}
  & \| \D_x^m \D_t^l u^\c \|_{L^\infty([0, t^\c], L^2)} \leq
  C_{m,l} \eps^{m+l-1-d/2} \quad \text{for } m, l \in \N, 1 \leq m+l
  \leq 4,
\end{align}
where $C_{m,l} = \| \D_X^m \D_T^l U^\c \|_{L^\infty([0, T^\c], L^2)}$.

We also imposed in Proposition \ref{th:HuKaMa} that $T^\c$ is chosen
sufficiently small to ensure that $U^\c(T) \in \Bc$ for all $T \in [0,
T^\c]$. This implies that $u^\c(t) \in \Bc$ for all $t \in [0, t^\c]$,
and hence we may conclude that
\begin{displaymath}
  %\label{eq:w:cb_stab}
  \< \ddel\Ea(u^\c(t)) v, v \> \geq \smfrac12\gamma \| \D \barv \|_{L^2}^2
  \qquad \forall\, v \in \ell^2, \quad t \in [0, t^\c_1].
\end{displaymath}
\subsection{Main result}
After the preparation we can state our result on the convergence of solutions
of~(\ref{eq:w:at}).
% We seek a solution $u^\a(t)$ to the atomistic dynamic problem
% \eqref{eq:w:at} subject to the initial condition
% \begin{equation}
%   \label{eq:w:at_init}
%   u^\a(\xi, 0) = \eps^{-1} U_0(\eps \xi), \quad \text{and} \quad
%   \dot{u}^\a(\xi, 0) = U_1(\eps \xi).
% \end{equation}
% ***!***

\begin{theorem}
  \label{th:w:mainthm}
  Suppose that $d \in \{1,2,3\}, k \geq 4$, and $\mA\cdot\L$ is
  stable. Let $\| \D U_0^\c \|_{L^\infty} < \inv$, where $\inv$ is
  chosen sufficiently small so that Proposition \ref{th:stab_small}
  applies. Finally, suppose that $\Md^{(j,2)}$ is finite for $j = 2,
  3, 4$, and let $C_{\rm init} > 0$ be a fixed constant.

  Then there exists $T^\a = \eps^{-1} t^\a \in (0, T^\c]$ and
  $\eps_0 > 0$ such that, for all $\eps \leq \eps_0$, and for any
  initial data $u^\a_0, u^\a_1$ with
  \begin{equation}
    \label{eq:errbnd_initial_data}
    \eps^{d/2} \b\| \D \Itil u^\a_0 - \D u^\c(0) \b\|_{L^2} +
    \eps^{d/2}  \b\| \D \Itil
    u^\a_1 - \D \dot{u}^\c(0) \b\|_{L^2} \leq C_{\rm init} \eps^{2},
  \end{equation}
  there exists $u^\a \in C^2([0, t^\a]; \ell^2)$ satisfying
  \eqref{eq:w:at}, and
  \begin{equation}
    \label{eq:w:errest}
    \max_{0 \leq t \leq t^\a} \eps^{d/2} \b( \| \D \Itil u^\a(t) - \D
    u^\c(t) \|_{L^2} + \| \Itil \dot{u}^\a(t) - \dot{u}^\c(t) \|_{L^2}
    \b) \leq C \eps^{2},
  \end{equation}
  where $C = C(\Lam, (\Md^{(j,2)})_{j = 2}^4, U^\c, C_{\rm init})$.

  If $d \leq 2$ or $k \geq 5$, then we may choose $t^\a = t^\c$.
\end{theorem}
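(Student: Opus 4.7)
The plan is to use a Lady-Windermere style energy estimate comparing the atomistic flow to a lattice companion of the Cauchy--Born trajectory $u^\c$ produced by Proposition~\ref{th:HuKaMa}, and then to bootstrap the resulting a priori bound over the full macroscopic interval $[0, t^\a]$. First I would set $\hat u^\c(t) := u^\c(t)|_\L$ (any lattice projection would do, up to $O(\eps^2)$ corrections) and write $w(t) := u^\a(t) - \hat u^\c(t)$, so that $w$ satisfies the perturbed atomistic wave equation
\begin{equation*}
(\ddot w(t), v)_\L + \b\< \del\Ea(\hat u^\c(t)+w(t)) - \del\Ea(\hat u^\c(t)), v \b\> = -\b\< R(t), v \b\>, \quad v \in \Usz,
\end{equation*}
with residual $\b\< R(t), v \b\> := (\ddot{\hat u}^\c(t), v)_\L + \b\< \del\Ea(\hat u^\c(t)), v \b\>$.

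The crux is a second-order residual estimate of the form $\|R(t)\|_{\ell^2} \lesssim \eps^{2-d/2}$. Testing with $\tilv$ for $v \in \Usz$, Proposition~\ref{th:Sa} rewrites $\b\< \del\Ea(\hat u^\c), \tilv \b\> = \int \mS^\a(\hat u^\c) : \D\barv \dx$, which I compare with the weak Cauchy--Born equation $(\ddot u^\c, \barv)_{\R^d} + \int \Scb(u^\c) : \D \barv \dx = 0$. Integrating by parts converts the stress gap into $\int \mathrm{div}\b(\Scb(u^\c) - \mS^\a(\hat u^\c)\b) \cdot \barv \dx$, which is bounded using the pointwise divergence-level consistency promised in \S~\ref{sec:w:cons} together with the regularity bounds \eqref{eq:w:cb_reg}; the mass-term discrepancy $(\ddot{\hat u}^\c, \tilv)_\L - (\ddot u^\c, \barv)_{\R^d}$ is a quadrature error of the same order thanks to Lemma~\ref{th:interp:basic_prop} and the reproduction-of-affine-functions property of $\zz$.

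Third, I would run the energy method with
\begin{equation*}
\mathcal{E}(t) := \tfrac12 \| \dot w(t) \|_{\ell^2}^2 + \tfrac12 \b\< \ddel\Ea(\hat u^\c(t)) w(t), w(t) \b\>.
\end{equation*}
Differentiating in time, using the perturbed equation, and Taylor expanding $\del\Ea(\hat u^\c+w) - \del\Ea(\hat u^\c) - \ddel\Ea(\hat u^\c) w$, one obtains $\tfrac{\dd}{\dd t}\mathcal{E} = -\b\< R, \dot w \b\> + \Gamma_1 + \Gamma_2$, where $\Gamma_1$ is a cubic-in-$w$ remainder controlled via \eqref{eq:bound_delj} and Lemma~\ref{th:embedding}, and $\Gamma_2 \lesssim \|\D \dot{\hat u}^\c\|_{L^\infty} \|\D \barw\|_{L^2}^2$ encodes the time dependence of the Hessian. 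Since $\hat u^\c \in \Ba$ for $\eps$ small, Proposition~\ref{th:stab_small} gives $\b\<\ddel\Ea(\hat u^\c) v, v \b\> \geq \tfrac12 \Lam \|\D \barv\|_{L^2}^2$, hence $\mathcal{E}$ controls both $\|\dot w\|_{\ell^2}^2$ and $\|\D \barw\|_{L^2}^2$. Combining the residual bound with Cauchy--Schwarz and Gronwall, and invoking the initial-data hypothesis \eqref{eq:errbnd_initial_data}, yields $\mathcal{E}(t) \leq C\eps^{4-d}$ uniformly on $[0, t^\a]$, which is exactly \eqref{eq:w:errest}.

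Local existence of $u^\a \in C^2([0,\tau); \ell^2)$ is standard since $\del\Ea$ is locally Lipschitz on $\Ush^{1,2} \cap \Ba$ by Theorem~\ref{th:defn_Ea}; a continuation bootstrap, with $t^\a$ defined as the largest time on which $\|\D \Itil u^\a(s)\|_{L^\infty} \leq \inv$, closes because the Gronwall bound keeps $\|\D\barw\|_{L^\infty}$ (via Lemma~\ref{th:embedding}) small enough to stay in the stability ball. When $d \leq 2$ or $k \geq 5$, sharper embedding plus one extra Taylor term controls the cubic remainder throughout $[0, t^\c]$, giving $t^\a = t^\c$. The main obstacle is establishing the divergence-level stress consistency used in step two: Theorem~\ref{th:stress:c2} only supplies pointwise accuracy of the stress itself (adequate for the static $\WWh^{-1,2}$-type analysis in Theorem~\ref{th:errsm:mainthm}), whereas the dynamic $L^2$ residual bound requires a further cancellation obtained by exploiting the inversion symmetry \eqref{eq:intro:ptsymm} and a more delicate Taylor expansion of the atomistic stress, with the finite constants $\Md^{(j,2)}$ controlling the bookkeeping.
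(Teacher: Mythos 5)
Your architecture matches the paper's: a lattice companion of $u^\c$ (the paper uses $z=(\zz\ast u^\c)|_\L$ rather than $u^\c|_\L$, so that Lemma~\ref{th:dS_global} applies directly, but this is cosmetic), an energy functional controlling $\|\dot w\|_{\ell^2}^2+\|\D\bar{w}\|_{L^2}^2$ via Proposition~\ref{th:stab_small}, a divergence-level stress consistency estimate built on the inversion symmetry and the $\Md^{(j,2)}$ constants, and a Gronwall/continuation bootstrap. You also correctly identify that Theorem~\ref{th:stress:c2} alone is insufficient and that one must integrate by parts to reach the divergence of the stresses. (One quantitative slip: the residual you need is $\|R(t)\|_{\ell^2}\lesssim\eps^{3-d/2}$, not $\eps^{2-d/2}$; the latter is what the stress-level estimate already gives via an inverse inequality and would only yield $O(\eps)$ in \eqref{eq:w:errest}. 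The extra power of $\eps$ is exactly what the divergence-level consistency of \S\ref{sec:w:cons} buys.)

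The genuine gap is in your treatment of the cubic remainder $\Gamma_1$. You propose to bound $\del^3\Ea(\cdot)[w,w,\dot w]$ directly via \eqref{eq:bound_delj} and the inverse estimate of Lemma~\ref{th:embedding}, which gives $\Gamma_1\lesssim \|\D\bar{w}\|_{L^\infty}\,\mathcal{E}\lesssim \mathcal{E}^{1/2}\mathcal{E}$. Under the bootstrap hypothesis $\mathcal{E}\leq C_*\eps^{4-d}$ this is a Gronwall coefficient of order $\eps^{(4-d)/2}$, which is $O(\eps)$ only for $d\leq 2$; for $d=3$ it is $\eps^{1/2}$, and over a macroscopic interval $t^\a\sim\eps^{-1}$ the Gronwall factor $\exp(C\eps^{1/2}t^\a)=\exp(C\eps^{-1/2})$ destroys the estimate. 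This is precisely the obstruction the paper overcomes with the Makridakis-type device in \S\ref{sec:w:nonlin}: the offending term $\del^3\Ea[\dot e,e,e]$ is rewritten as $\smfrac{\dd}{\dt}\smfrac13\del^3\Ea[e,e,e]-\smfrac13\del^4\Ea[\dot z+\theta\dot e,e,e,e]$, so that after time integration one is left with boundary terms of size $E^3(0)+E^3(s)\lesssim\eps^{2-d/2}\cdot\eps^{4-d}$ (absorbed without Gronwall) and an integrand $\eps E^3+E^4$ whose effective Gronwall coefficient is $C_*\eps^{4-d}=O(\eps)$ for $d=3$. This idea is also what produces the theorem's case distinction — a possibly shortened $T^\a$ for $d=3,k=4$ versus $T^\a=T^\c$ when $k\geq5$ permits a second integration by parts — which your proposal cannot reproduce: your claim that "sharper embedding plus one extra Taylor term" closes the $d\leq2$ or $k\geq5$ cases does not address why the base case $d=3$, $k=4$ works at all. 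Without the integration by parts in time, the argument as written proves the theorem only for $d\leq 2$.
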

\begin{proof}
  We give here an outline of the proof, but will establish the key
  technical results in the following subsections. All constants in
  this proof may depend on any property of $U^\c$.

  {\it 1. Setup: } From Lemma \ref{th:w:at_local_ex} we obtain local
  existence for the atomistic problem: there exists $t_1 > 0$ and $u
  \in C^2([0, t_1]; \ell^2)$ satisfying \eqref{eq:w:at}.  Let $z(t) :=
  \tilu^\c(t)|_\L = (\zz \ast u^\c(t))|_\L$ and let
  \begin{displaymath}
    e(\xi, t) := u^\a(\xi, t) - z(\xi,t),
  \end{displaymath}
  whenever the left-hand side is well-defined. Moreover, let $w \in
  \ell^2$ such that $\tilw(\xi,t) = e(\xi,t)$.

  It follows from \eqref{eq:w:cb_reg}, Lemma
  \ref{th:convolution_error}, and Lemma \ref{th:interp_error}, that
  the initial error satisfies
  \begin{equation}
    \label{eq:w:err_init}
    \| \dot{e}(0) \|_{\ell^2}^2 + \smfrac\gamma2 \|\D\bare(0) \|_{L^2}^2 \leq C_0 \eps^{4 - d},
  \end{equation}
  where $C_0$ depends on $U_0^\c, U_1^\c$, $C_{\rm init}$ but is
  independent of $\eps$. We fix a constant $C_* > C_0$, which will be
  specified later on. Since $u^\a \in C^2([0, t_1], \ell^2)$ (and
  hence $e \in C^2([0, t_1], \ell^2)$) upon choosing $t_1$
  sufficiently small, we obtain
  \begin{equation}
    \label{eq:w:err_traj_t1}
    \| \dot{e}(t) \|_{\ell^2}^2 + \smfrac\gamma2 \|\D\bare(t)
    \|_{L^2}^2 \leq C_* \eps^{4 - d} \qquad \forall t \in [0, t_1].
  \end{equation}
  Thus the task is to prove that we may choose $t_1 \gtrsim
  \eps^{-1}$.

  From the definition of $z$ and the assumption that $\| \D u^\c(t)
  \|_{L^\infty} \leq \inv' < \inv$ for all $t \in [0, t^\c]$, we
  deduce that $|D_\rho z(t,\xi)| \leq \inv'$ for all $t, \xi, \rho$
  (cf. the proof of Lemma \ref{th:stress:moderr_negSob}). Choosing
  $\eps_0$ sufficiently small, then for $\eps \leq \eps_0$ we obtain
  from \eqref{eq:w:err_traj_t1} that
  \begin{equation}
    \label{eq:w:Ea_smooth_t1}
    z(t) + \theta e(t) \in \Ba \qquad \forall t \in [0, t_1], \quad \theta
    \in [0, 1].
  \end{equation}
  Since we assumed that $\kappa$ was chosen sufficiently small,
  Proposition \ref{th:stab_small} implies that
  \begin{equation}
    \label{eq:w:stab_traj_t1}
    \b\< \ddel\Ea(z(t)+\theta e(t)) v, v \b\> \geq \smfrac\Lam2 \| \D\barv
    \|_{L^2}^2 \qquad \forall v \in \ell^2, \quad t \in [0, t_1],
    \quad \theta \in [0, 1].
  \end{equation}

  {\it 2. Error equation: } Testing \eqref{eq:w:at} with $\dot{e}$ and
  \eqref{eq:w:cb} with $\dot{w}$, yields
  \begin{equation}
    \label{eq:w:err_eqn_1}
    (\ddot{e}, \dot{e})_\L + \< \del\Ea(u^\a) - \del\Ea(z),
    \dot{e} \> = \B\{ (\ddot{u}^\c, \dot w)_\Om - (\ddot{z},
    \dot{e})_\L \B\} + \B\{ \<\del\Ec(u^\c), \dot{w} \> - \<\del\Ea(z), \dot{e} \> \B\}.
  \end{equation}
  Since $\tilw = e$, the first group on the right-hand side can be
  rewritten as (cf. \eqref{eq:fa:1})
  \begin{displaymath}
    \B\{ (\ddot{u}^\c, \dot w)_\Om - (\ddot{z},
    \dot{e})_\L \B\} = (\ddot{u}^\c - \ddot{z}, \dot{w})_\Om
    =: (\alpha, \dot{w})_\Om,
  \end{displaymath}
  where $z$ and $w$ are identified with their first-order
  interpolants.  Similarly, using Proposition~\ref{th:Sa}, we can
  write
  \begin{displaymath}
    \B\{ \<\del\Ec(u^\c), \dot{w} \> - \<\del\Ea(z), \dot{e} \> \B\} = (\Scb(u^\c) - \Sa(z), \D\dot{w})_\Om =: (\beta, \D\dot{w})_\Om,
  \end{displaymath}
  Moreover, according to \eqref{eq:w:Ea_smooth_t1} we can expand
  \begin{displaymath}
    \b\< \del\Ea(u^\a) - \del\Ea(z),
    \dot{e} \b\> =: \int_0^1 \b\< \ddel\Ea(z + \theta e) e, \dot{e}
    \b\> \dd\theta
    =: \< H e, \dot{e} \>,
  \end{displaymath}
  to rewrite \eqref{eq:w:err_eqn_1} as
  \begin{equation}
    \label{eq:w:err_eqn_2}
    \frac{\dd}{\dt} \B\{\| \dot{e} \|_\L^2 + \< H
    e, e \> \B\} = \< \dot{H} e, e\> + 2 (\alpha, \dot{e})_\Om +
    2 (\beta, \D\dot{w})_\Om.
  \end{equation}
  We define $E^2(t) := \| \dot{e} \|_{\ell^2}^2 + \< H e, e \>$, and
  note that \eqref{eq:w:stab_traj_t1} immediately implies that
  \begin{equation}
    \label{eq:w:E2_positive}
    \| \dot{e} \|_{\ell^2}^2
    + \smfrac\Lam2 \| \D \bare \|_{L^2}^2  \leq E^2(t) \leq C_H \b( \| \dot{e}
    \|_{\ell^2}^2 + \smfrac{\Lam}{2} \| \D\bare \|_{L^2}^2\b) \qquad \forall t \in [0, t_1],
  \end{equation}
  where $C_H$ depends only on $M^{(2)}$.
  We integrate \eqref{eq:w:err_eqn_2} over $(0, s)$, $s \leq t_1$, to
  obtain
  \begin{equation}
    \label{eq:w:err_eqn_3}
    E^2(s) = E^2(0) + \int_0^s \B\{ \< \dot{H} e, e\> + 2(\alpha,
    \dot{w})_\Om + 2(\beta, \dot{w})_\Om \B\} \dt,
  \end{equation}
  and proceed to estimate the three terms on the right-hand side
  separately.

  {\it 3. Consistency estimates and nonlinearity: } The second and
  third term on the right-hand side of \eqref{eq:w:err_eqn_3} measure
  the consistency between the atomistic and Cauchy--Born
  model. Standard interpolation error results (cf. \S\ref{sec:w:cons}
  for the details) yield the estimate $\| \alpha \|_{L^2} \lesssim \|
  \D^2 \ddot{u}^\c \|_{L^2} \lesssim \eps^{3-d/2}$, and hence,
  applying Cauchy's inequality and \eqref{eq:interp:norm_equiv_Lp}, we
  obtain
  \begin{equation}
    \label{eq:w:prf_mom_cons}
    \int_0^s 2(\alpha,
    \dot{w})_\Om \dt \leq C_\alpha \bg( t_1 \eps^{5-d} + \eps \int_0^s
    E^2 \dt \bg),
  \end{equation}
  where $C_\alpha$ depends only on the trajectory $U^\c$ through the
  bounds \eqref{eq:w:cb_reg}, but it is independent of $\eps$.

  The third term on the right-hand side of \eqref{eq:w:err_eqn_3}
  requires an estimate that it similar to Lemma
  \ref{th:stress:moderr_negSob}. We establish the required variant in
  \S\ref{sec:w:cons}:
  \begin{equation}
    \label{eq:w:prf_frc_cons_pre}
    (\beta, \D\dot w)_{L^2} \leq C \eps^{3-d/2} \| \dot{w} \|_{L^2},
  \end{equation}
  where $C = C(U^\c, (\Md^{(j,2)})_{j=2}^4)$.
  Integrating over $(0, s)$, $s \leq t_1$, and applying Cauchy's
  inequality, we again obtain
  \begin{equation}
    \label{eq:w:prf_frc_cons}
    \int_0^s 2(\beta, \D \dot{w})_{L^2} \dt \leq C_\beta \bg( t_1
    \eps^{5-d} + \eps \int_0^s E^2 \dt \bg),
  \end{equation}
  where $C_\beta$ depends on $(\Ms^{(j,2)})_{j = 2}^4$ and on $U^\c$, but
  not on $\eps$ (provided $\eps \leq \eps_0$, which we chose above so
  that $u^\a = z + e \in \Ba$).

  Finally, the first term on the right-hand of \eqref{eq:w:err_eqn_3}
  side is not a consistency term, but must be otherwise
  controlled. Our argument is a refinement of a method due to
  Makridakis \cite{Makr:wave} for the numerical analysis of nonlinear
  wave equations.
  % In previous approximation works on nonlinear wave equations \cite{Makr:wave,
  %  OrSu:wave} it was estimated above by $E^3$ through the use of an
  % inverse estimate analogous to Lemma \ref{th:embedding}; however,
  % this approach requires an approximation of third-order in 3D to
  % yield a rigorous convergence result. Since the Cauchy--Born
  % approximation is only of second order we require a new idea at this
  % stage:
  In \S\ref{sec:w:nonlin} we show that an integrating by parts argument leads to
  \begin{equation}
    \label{eq:w:prf_nonlin}
    \int_0^s \b\< \dot{H} e, e \b\> \dt \leq
    C_{\rm nl} \bg( E^3(s) + E^3(0) + \int_0^s \B( \eps E^2 + \eps E^3
    +E^4\B) \dt \bg),
  \end{equation}
  where $C_{\rm nl} = C_{\rm nl}(M^{(3)}, M^{(4)}, U^\c)$.

  {\it 4. Gronwall lemma: } Combining \eqref{eq:w:prf_mom_cons},
  \eqref{eq:w:prf_frc_cons}, \eqref{eq:w:prf_nonlin} with
  \eqref{eq:w:err_eqn_3} yields
  \begin{align*}
    E^2(s) \leq\,& E^2(0) + (C_\alpha+C_\beta) \bg( t_1 \eps^{5-d} +
    \eps
    \int_{0}^s E^2 \dt \bg) \\
    & + C_{\rm nl} \bg( E^3(0) + E^3(s) + \int_0^s \B(\eps E^2 + \eps
    E^3 + E^4\B) \dt \bg).
  \end{align*}
  We employ \eqref{eq:w:err_init}, \eqref{eq:w:err_traj_t1},
  \eqref{eq:w:E2_positive} and $T_1 := t_1 \eps$, to obtain
  \begin{align*}
    E^2(s) \leq\,& \b[ C_HC_0 + (C_\alpha+C_\beta) T_1 \b] \eps^{4-d}
    + \eps (C_\alpha+C_\beta+C_{\rm nl}) \int_0^s E^2 \dt \\
    & + C_{\rm nl} \bg( C_H^{3/2} C_0^{3/2} \eps^{6-3d/2} + C_H^{3/2}
    C_*^{3/2} \eps^{6-3d/2} + \int_0^s C_H C_* \eps^{4-d} E^2 \dt \bg).
  \end{align*}
  Since $\eps^{6-2d/2} \leq \eps^{4-d} \cdot \eps_0^{2-d/2}$ and since
  $d \leq 3$, choosing $\eps_0$ sufficiently small ensures that
  \begin{displaymath}
    C_H^{3/2} C_0^{3/2} \eps^{6-3d/2} + C_H^{3/2}
    C_*^{3/2} \eps^{6-3d/2} \leq C_H C_0 \eps^{4-d}.
  \end{displaymath}
  (However, if $d = 3$ then $\eps^{4-d} = \eps$, hence the integral
  term cannot be made arbitrarily small for $s = O(1/\eps)$.) Upon
  defining $C_1 = C_1(T_1) = 2C_HC_0 + (C_\alpha+C_\beta) T_1$ and
  $C_2 = C_2(C_*, \eps_0) = C_\alpha+C_\beta+C_{\rm nl}(1+C_H C_*
  \eps_0^{3-d})$, yields
  \begin{equation}
    \label{eq:w:err_eqn_4simple}
    E^2(s) \leq C_1 \eps^{4-d} + C_2 \int_0^s E^2 \dt, \qquad
    \text{for } 0 \leq s \leq t_1.
  \end{equation}
  Applying Gronwall's inequality, we obtain
  \begin{equation}
    \label{eq:w:err_2}
    \max_{0 \leq t \leq t_1} E^2(t) \leq C_1 \exp(C_2 T_1) \eps^{4-d}.
  \end{equation}
  We observe that $C_1 \exp(C_2 T_1) \to 2 C_H C_0$ as $T_1 \to 0$.

  We now choose $C_*, \eps_0, T^\a$ in the following order: 1. $C_* :=
  4 C_H C_0$ (or any constant larger than $2 C_H C_0$); 2. $\eps_0
  \leq 1$
  and sufficiently small as required in the estimates up to this point
  (dependent in particular on $C_*$); and 3. $T^\a > 0$ sufficiently
  small so that
  \begin{displaymath}
     C_1(T^\a) \exp(C_2(C_*, 1) T^\a) =: C_3(T^\a) \leq C_* \qquad
     \text{and} \qquad T^\a \leq T^\c.
  \end{displaymath}
  This is possible due to the fact that $C_3(T^\a) \to \smfrac12 C_*$ as $T^\a
  \to 0$.  We set $t^\a := T^\a/\eps$.

  Using these definitions, we can estimate \eqref{eq:w:err_2} above by
  \begin{equation}
    \label{eq:w:err_3}
    \max_{0 \leq t \leq t_1} E^2(t) \leq C_3(T_1) \eps^{4-d} \leq C_*
    \eps^{4-d}.
  \end{equation}

  {\it 5. Continuation argument: } Suppose that $t_1 \leq t^\a$ is
  chosen maximally, such that the trajectory $u^\a$ exists in $[0,
  t_1]$ and \eqref{eq:w:err_traj_t1} holds with the choice of $C_*$ we
  made above. If $t_1 < t^\a$, then we end up with the stronger error
  estimate \eqref{eq:w:err_3}. We apply Lemma \ref{th:w:at_local_ex}
  again to extend $u^\a$ to an interval $[t_1, t_2]$ where $t_1 < t_2
  \leq t^\a$. Since \eqref{eq:w:err_3} holds in $[0, t_1]$, and since
  $C_3(T_1) < C_*$ when $T_1 < T^\a$, choosing $t_2$ sufficiently
  close to $t_1$ implies that \eqref{eq:w:err_traj_t1} holds in $[0,
  t_2]$. This contradicts the maximality of $t_1$ and hence we must
  have $t_1 = t^\a$. Thus, we conclude that \eqref{eq:w:err_traj_t1}
  holds in $[0, t^\a]$, where $t^\a = \eps^{-1} T^\a$ and $T^\a > 0$.

  {\it 6. Maximal time interval for $d < 3$: } If $d < 3$ then one
  varies the argument following \eqref{eq:w:err_2}, exploiting the
  fact that $\eps_0^{3-d} \to 0$ as $\eps_0 \to 0$, as
  follows. 1. Choose $C_* := C_1(T^\c) \exp( C_2(1, 1)
  T^\c)$. 2. Choose $\eps_0$ sufficiently small as required by the
  previous estimates, and in addition sufficiently small so that $C_*
  \eps_0 \leq 1$. 3. Choose $T^\a = T^\c$. The rest of the argument is
  analogous.

  {\it 7. Maximal time interval for $k \geq 5$: } Now suppose that $d
  = 3$ and $k \geq 5$, then we need to vary the proof starting from
  \eqref{eq:w:prf_nonlin}. The assumption that $k \geq 5$ allows us to
  integrate the term $\< \dot{H} e, e\>$ by parts twice, to prove in
  \S\ref{sec:w:nonlin} that
  \begin{equation}
    \label{eq:w:prf_nonlin_5}
    \begin{split}
      \int_0^s \b\< \dot{H} e, e \b\> \dt \leq
      C_{\rm nl} \bg(& E^3(s) + E^3(0) + E^4(s) + E^4(0) \\
      & + \int_0^s \B( \eps E^2 + \eps E^3 + \eps E^4 + E^5(t) \B) \dt
      \bg),
    \end{split}
  \end{equation}
  where $C_{\rm nl}$ depends only on $(M^{(j)})_{j = 3}^5$. Exploiting
  the higher powers of $E$ and arguing similarly as in {\it 4}, upon
  choosing $\eps_0$ sufficiently small, we obtain
  \begin{displaymath}
    \int_0^s \b\< \dot{H} e, e \b\> \dt \leq C_H C_0 \eps^{4-d} +
    C_{\rm nl} \eps \int_0^s E^2 \dt,
  \end{displaymath}
  and eventually
  \begin{displaymath}
    \max_{0 \leq t \leq t_1} E^2(t) \leq C_1 \exp(C_2 T_1) \eps^{4-d},
  \end{displaymath}
  where $C_1 = 2 C_H C_0 + (C_\alpha+C_\beta) T_1$ and $C_2 =
  C_\alpha+C_\beta + C_{\rm nl}$. Since $C_1$ and $C_2$ are
  independent of $C_*$ we can now argue as in step {\it 6} to deduce that
  we may choose $T^\a = T^\c$.
\end{proof}

\begin{lemma}
  \label{th:w:at_local_ex}
  Let $u_0, u_1 \in \ell^2$, $\| \D u_0 \|_{L^\infty} < \inv$, then
  there exists $t_1 > 0$ and a unique trajectory $u \in C^2([0, t_1];
  \ell^2)$ satisfying \eqref{eq:w:at} with $u(0) = u_0, \dot{u}(0)
  = u_1$.
\end{lemma}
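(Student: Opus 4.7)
The plan is to reformulate \eqref{eq:w:at} as a first-order ODE on the Hilbert space $\ell^2\times\ell^2$ and invoke the classical Picard--Lindel\"{o}f theorem for ODEs in Banach spaces. Set $\iA := \{u \in \ell^2 \sep \|\D\baru\|_{L^\infty} < \inv\}$. Because $\zz$ has compact support, a finite-overlap argument combined with the Cauchy--Schwarz inequality yields the elementary embeddings $\|\D\baru\|_{L^\infty} \lesssim \|u\|_{\ell^2}$ and $\|\D\baru\|_{L^2} \lesssim \|u\|_{\ell^2}$ for every $u \in \ell^2$. The first shows that $\iA$ is open in $\ell^2$; linearity of $u\mapsto\D\baru$ makes $\iA$ convex; and by hypothesis both $u_0$ and $0$ belong to $\iA$. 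Since $\iA \subset \Ush^{1,2}\cap\Ba$, Theorem~\ref{th:defn_Ea}(iii) guarantees that $\del\Ea(u) \in (\Ush^{1,2})^*$ is well-defined for every $u \in \iA$ and that the second-variation bound~\eqref{eq:bound_delj} applies throughout $\iA$.

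The key step is to upgrade $\del\Ea(u)$ to an element of $\ell^2$ and establish a corresponding Lipschitz estimate. For $u_1, u_2 \in \iA$ and $v \in \Usz$, convexity of $\iA$ combined with~\eqref{eq:bound_delj} applied with $j=2$ and $p_1=p_2=2$ gives
\begin{displaymath}
  \b|\b\<\del\Ea(u_1) - \del\Ea(u_2), v\b\>\b|
  \leq \int_0^1 \b|\ddel\Ea(u_2 + \theta(u_1-u_2))[u_1-u_2, v]\b|\, \dd\theta
  \lesssim M^{(2)} \|\D\overline{u_1-u_2}\|_{L^2}\|\D\barv\|_{L^2}.
\end{displaymath}
Combining with the $L^2$-embedding from the previous paragraph yields
\begin{displaymath}
  \b|\b\<\del\Ea(u_1) - \del\Ea(u_2), v\b\>\b|
  \lesssim M^{(2)}\, \|u_1-u_2\|_{\ell^2}\,\|v\|_{\ell^2}
  \qquad \forall v \in \Usz.
\end{displaymath}
Lemma~\ref{th:delEa0.eq.0} shows that $\<\del\Ea(0), v\> = 0$ for every $v \in \Usz$, and $\Usz$ is dense in $\ell^2$. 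The Riesz representation theorem therefore produces a globally Lipschitz map $G : \iA \to \ell^2$ characterised by $(G(u), v)_\L = \<\del\Ea(u), v\>$ on $\Usz$, with $G(0) = 0$ and Lipschitz constant $\lesssim M^{(2)}$.

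Finally, the first-order vector field $\iF(u, p) := (p, -G(u))$ on the open set $\iA\times\ell^2 \subset \ell^2\times\ell^2$ is globally Lipschitz, so the Picard--Lindel\"{o}f theorem for ODEs in Banach spaces delivers, for some $t_1 > 0$, a unique trajectory $(u, p) \in C^1([0, t_1]; \ell^2\times\ell^2)$ with $(u(0), p(0)) = (u_0, u_1)$, $\dot u = p$, and $\dot p = -G(u)$. Since $G(u(\cdot)) \in C([0, t_1]; \ell^2)$, we conclude that $u \in C^2([0, t_1]; \ell^2)$ and $u$ satisfies~\eqref{eq:w:at}. The only nontrivial ingredient is the Lipschitz estimate for $G$, which is a short computation combining the second-variation bound~\eqref{eq:bound_delj} with the elementary embeddings $\|\D\barv\|_{L^2} \lesssim \|v\|_{\ell^2}$ and $\|\D\baru\|_{L^\infty} \lesssim \|u\|_{\ell^2}$; everything else is standard ODE theory.
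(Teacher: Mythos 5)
Your proposal is correct and follows essentially the same route as the paper's proof: the second-variation bound \eqref{eq:bound_delj} with $j=2$ combined with the embeddings $\| \D \barv \|_{L^2} \lesssim \| v \|_{\ell^2}$ shows that $\del\Ea$ is a locally Lipschitz map into $\ell^2$ near $u_0$, and Picard's theorem for the first-order system then gives local existence and uniqueness. You merely make explicit the steps the paper leaves implicit (openness and convexity of the admissible set, the Riesz identification of $\del\Ea(u)$ with an element of $\ell^2$ using $\del\Ea(0)=0$, and the reduction to a first-order ODE), which is fine.
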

\begin{proof}
  The estimate \eqref{eq:bound_delj} with $j = 2$ implies the
  existence of $\delta > 0$ such that, for all $v_1, v_2 \in
  B_{\ell^2}(u_0, \delta)$,
  \begin{displaymath}
    \b|\< \del\Ea(v_1) - \del\Ea(v_2), w \>\b| \leq M^{(2)} \|
    \D \barv_1 - \D\barv_2 \|_{L^2}\| \D\barw\|_{L^2} \lesssim M^{(2)} \| v_1
    - v_2 \|_{\ell^2} \| w \|_{\ell^2}.
  \end{displaymath}
  that is, $\del\Ea : \ell^2 \to \ell^2$ is Lipschitz. Hence the
  result follows from Picard's theorem.
\end{proof}

\subsection{Consistency estimates}
\label{sec:w:cons}
In this section, we prove the two consistency estimates
\eqref{eq:w:prf_mom_cons} and \eqref{eq:w:prf_frc_cons}. We begin by
considering \eqref{eq:w:prf_mom_cons}, which follows from standard
interpolation error estimates.

\begin{lemma}
  \label{th:w:momentum_cons}
  Let $v \in W^{2,2}$, and let $\ol{\zz\ast v}$ be the first-order
  interpolant of $\zz\ast v|_\L$, then
  \begin{displaymath}
    \b\| v - \ol{\zz\ast v} \b\|_{L^2} \lesssim \| \D^2 v \|_{L^2}.
  \end{displaymath}
\end{lemma}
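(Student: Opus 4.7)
The plan is to split the error by the triangle inequality
\[
\b\| v - \ol{\zz\ast v} \b\|_{L^2} \leq \b\| v - \zz\ast v \b\|_{L^2} + \b\| \zz\ast v - \ol{\zz\ast v} \b\|_{L^2},
\]
and bound each term separately, using only the structural assumptions on $\zz$ from \S\ref{sec:interp} (compact support, $\int \zz\,\dd x = 1$, symmetry $\zz(-x)=\zz(x)$, and reproduction of affine functions).

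For the first term, I would write, for a.e.\ $x$,
\[
v(x) - (\zz\ast v)(x) = \int_{\R^d} \zz(x-y)\b(v(x)-v(y)\b)\dy,
\]
and insert the first-order Taylor expansion $v(y) = v(x) + \D v(x)\cdot(y-x) + r(x,y)$ with integral remainder $|r(x,y)| \lesssim |y-x|^2 \int_0^1 |\D^2 v(x+s(y-x))|\,\dd s$. The linear term integrates to zero using the symmetry of $\zz$ (equivalently, the fact that $\zz$ reproduces affine functions: $\int \zz(x-y)(y-x)\dy = 0$). What remains is controlled by a convolution of $|\D^2 v|$ against a kernel of compact support; Young's (or Minkowski's) inequality then yields $\| v - \zz\ast v\|_{L^2} \lesssim \|\D^2 v\|_{L^2}$.

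For the second term, let $w := \zz\ast v$; note that $w\in W^{2,2}_\mathrm{loc}$ since $v\in W^{2,2}$ and $\zz \in W^{1,\infty}_c$, and moreover $\|\D^2 w\|_{L^2} = \|\zz\ast \D^2 v\|_{L^2} \leq \|\zz\|_{L^1}\,\|\D^2 v\|_{L^2} = \|\D^2 v\|_{L^2}$ by Young's inequality and $\int \zz = 1$. Then $\ol{w}$ is precisely the first-order interpolant defined by \eqref{eq:interp:S1_interp}, and the Bramble--Hilbert lemma applied cell-by-cell on the unit-mesh grid $\Z^d$ (again using that $\ol{\;\cdot\;}$ reproduces affine functions) gives $\|w - \ol w\|_{L^2} \lesssim \|\D^2 w\|_{L^2}$ with a constant depending only on $\zz$. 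Combining the two bounds yields the claim.

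I expect the only subtlety to be the Bramble--Hilbert step for $\|w - \ol w\|_{L^2}$: one must pass from a local (per-cell) estimate to a global one, which here is immediate because the supports of the $\zz(\;\cdot\;-\xi)$ have bounded overlap. Apart from that, everything is a standard mollifier/interpolation-error computation, and both contributions produce the desired $\|\D^2 v\|_{L^2}$ bound.
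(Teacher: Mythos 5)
Your proposal is correct and follows essentially the same route as the paper: the identical triangle-inequality splitting, with the first term handled by the mollification error estimate (the paper's Lemma~\ref{th:convolution_error}, which you re-derive via Taylor expansion and the affine-reproduction/symmetry of $\zz$) and the second by a standard interpolation-error (Bramble--Hilbert) bound combined with $\|\D^2(\zz\ast v)\|_{L^2}\leq\|\D^2 v\|_{L^2}$. The only difference is that you spell out details the paper delegates to a cited lemma and to \cite{Ciarlet:1978}.
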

\begin{proof}
  We first apply a triangle inequality,
  \begin{displaymath}
    \b\| v - \ol{\zz\ast v} \b\|_{L^2} \leq \b\| v - \zz \ast v
    \b\|_{L^2} + \b\|\zz \ast v - \ol{\zz\ast v} \b\|_{L^2}.
  \end{displaymath}
  The first term on the right-hand side is estimated using Lemma
  \ref{th:convolution_error},
  \begin{displaymath}
    \b\| v - \zz \ast v
    \b\|_{L^2} \lesssim \| \D^2 v \|_{L^2}.
  \end{displaymath}
  The second term on the right-hand side is a standard
  Q1-interpolation error, which can be estimated by
  \cite{Ciarlet:1978}
  \begin{displaymath}
    \b\|\zz \ast v - \ol{\zz\ast v} \b\|_{L^2} \lesssim \| \D^2 (\zz
    \ast v) \|_{L^2} = \| \zz \ast \D^2 v \|_{L^2} \leq \| \D^2
    v\|_{L^2}. \qedhere
  \end{displaymath}
\end{proof}

\begin{proof}[Proof of \eqref{eq:w:prf_mom_cons}]
  Applying Lemma \ref{th:w:momentum_cons}, the regularity bound
  \eqref{eq:w:cb_reg}, and the norm-equivalence $\| \dot{w} \|_{L^2}
  \approx \| \dot{e} \|_{\ell^2} \leq E$ (see \S~\ref{sec:interp}), we
  can estimate
  \begin{align*}
    \int_0^s (\alpha, \dot{w})_{L^2} \dt \lesssim\,& \int_0^s \| \D^2
    \ddot{u}^\c \|_{L^2} \| \dot{w} \|_{L^2} \dt
    \lesssim \int_0^s \eps^{3-d/2} E \dt.
  \end{align*}
  Applying Cauchy's inequality to $\eps^{3-d/2} E = \eps^{(5-d)/2} \cdot (\eps^{1/2} E)$, we
  obtain
  \begin{displaymath}
    \int_0^s (\alpha, \dot{w})_{L^2} \dt \lesssim \int_0^s \B(
    \eps^{5-d} + \eps E^2 \B) \dt \leq t_1 \eps^{5-d} + \eps \int_0^s
    E^2 \dt \qedhere
  \end{displaymath}
\end{proof}

We now turn towards \eqref{eq:w:prf_frc_cons}, which is the crucial
ingredient in the dynamic analysis.  Our strategy will be to integrate
by parts, and estimate $\D\beta$, which is the error in the divergence
of the stresses. To that end, we first need to prove that $x \mapsto
\Sa(u; x)$ is differentiable. For $\Scb$, the analogous result follows
from Lemma \ref{th:defn_W}.

\begin{lemma}
  Let $u \in \Ba$, then $x \mapsto \Sa(u; x) \in
  \WW^{1,\infty}$, with
  \begin{equation}
    \label{eq:w:divSa}
    {\rm div}\, \Sa(u; x) = \sum_{\rho\in\Rg} \sum_{\xi\in\L}
    \Es_{\xi,\rho}(u) \D_\rho \ww_{\xi,\rho}(x),
  \end{equation}
  where $\D_\rho \ww_{\xi,\rho}(x) = \D\ww_{\xi,\rho}(x) \cdot \rho$.
\end{lemma}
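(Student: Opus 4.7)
The plan is to establish $\Sa(u;\cdot) \in \WW^{1,\infty}$ by showing that the formal series defining $\Sa$ and its termwise partial derivatives both converge absolutely and uniformly, after which the divergence formula is read off by collecting the directional combination $\sum_\alpha \rho_\alpha \partial_{x_\alpha} = \D_\rho$.

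First, I record the regularity of the basis kernels. Since $\zz \in \WW^{1,\infty}(\R^d)$ has compact support, each $\ww_{\xi,\rho}$ lies in $\WW^{1,\infty}(\R^d)$ with $\|\ww_{\xi,\rho}\|_{\LL^\infty} \le \|\zz\|_{\LL^\infty}$, $\|\D\ww_{\xi,\rho}\|_{\LL^\infty} \le \|\D\zz\|_{\LL^\infty}$, and
\[
\partial_{x_\alpha} \ww_{\xi,\rho}(x) = -\int_0^1 (\partial_\alpha \zz)(\xi+t\rho-x)\,\dt.
\]
By compact support of $\zz$, the kernel $\ww_{\xi,\rho}$ is supported in a tube of fixed width around the segment $[\xi,\xi+\rho]$; consequently, for each fixed $x$ and fixed $\rho \in \Rg$ there are only $\lesssim |\rho|+1$ lattice sites $\xi \in \L$ contributing a nonzero term to $\ww_{\xi,\rho}(x)$ or $\D\ww_{\xi,\rho}(x)$.

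Second, I bound the series and its partials uniformly in $x$. For $u \in \Ba$ the stencil $Du(\xi)$ lies in $\DV_\inv$, so $|\Es_{\xi,\rho}(u)| = |V_\rho(Du(\xi))| \le m(\rho)/|\rho|$. Combining this with the bond-tube count,
\[
\sum_{\xi\in\L}\sum_{\rho\in\Rg} \b|\Es_{\xi,\rho}(u)\otimes\rho\b|\,\b(\|\ww_{\xi,\rho}\|_{\LL^\infty} + \|\D\ww_{\xi,\rho}\|_{\LL^\infty}\b) \lesssim \sum_{\rho\in\Rg} m(\rho)\b(|\rho|+1\b),
\]
which is controlled by the paper's decay hypotheses. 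The Weierstrass $M$-test then gives uniform convergence of the series for $\Sa$ and each of its partial derivatives. A useful companion observation, obtained immediately from $\D_\rho\ww_{\xi,\rho} = \int_0^1 \tfrac{\dd}{\dt}\zz(\xi+t\rho-x)\,\dt$, is the telescoping identity
\[
\D_\rho\ww_{\xi,\rho}(x) = \zz(\xi-x) - \zz(\xi+\rho-x),
\]
which delivers a bound on $|\D_\rho\ww_{\xi,\rho}|$ that is uniform in $|\rho|$ and yields a particularly clean estimate for the divergence sum using only $M^{(1)}$.

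Third, termwise differentiation of the uniformly convergent series is legitimate, giving $\Sa(u;\cdot) \in \WW^{1,\infty}$ and, for almost every $x$,
\[
\partial_{x_\alpha} \Sa_{i\beta}(u;x) = \sum_{\xi\in\L}\sum_{\rho\in\Rg} (\Es_{\xi,\rho}(u))_i\,\rho_\beta\,\partial_{x_\alpha}\ww_{\xi,\rho}(x).
\]
Contracting $\alpha = \beta$ and invoking $\sum_\alpha \rho_\alpha \partial_{x_\alpha}\ww_{\xi,\rho} = \D_\rho\ww_{\xi,\rho}$ produces the claimed formula \eqref{eq:w:divSa}. The only real obstacle is the bookkeeping in step two, where one exploits the compact support of $\zz$ to compensate for the length of the bond and recognizes that the telescoping structure of $\D_\rho \ww_{\xi,\rho}$ supplies a sharper bound for the divergence than for a generic partial; the remaining arguments are standard uniform-convergence manipulations.
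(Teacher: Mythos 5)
Your overall strategy (termwise differentiation of the series and a Weierstrass $M$-test) is viable and differs from the paper's, which instead proves a direct Lipschitz estimate on $x \mapsto \Sa(u;x)$ after subtracting $\Es_{\xi,\rho}(0)$ from $\Es_{\xi,\rho}(u)$ (exploiting $\sum_{\xi}(\ww_{\xi,\rho}(x)-\ww_{\xi,\rho}(y))=0$) to obtain a bound $\lesssim M^{(2)}\|\D u\|_{L^\infty}|x-y|$. However, your step two as written does not close. The majorant you produce is $\sum_{\rho\in\Rg} m(\rho)(|\rho|+1)$, and you assert this "is controlled by the paper's decay hypotheses." It is not: the standing assumption is only $M^{(1)}=\sum_{\rho} m(\rho)<\infty$, and $\sum_\rho m(\rho)|\rho|$ can diverge for admissible potentials (for a pair potential with $m(\rho)\lesssim|\rho|^{-\alpha}$ and $d<\alpha\le d+1$, the energy is well-defined but your majorant is infinite; the constants $\Md^{(j,2)}$ assumed in \S\ref{sec:w:cons} control multi-indices of length $\ge 2$ and do not repair this either). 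The source of the loss is the crude bounds $\|\ww_{\xi,\rho}\|_{\LL^\infty}\le\|\zz\|_{\LL^\infty}$ and $\|\D\ww_{\xi,\rho}\|_{\LL^\infty}\le\|\D\zz\|_{\LL^\infty}$: these do not compensate for the $O(|\rho|)$ sites in the bond tube.

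The fix is short and entirely in the spirit of your own observations. Since $\zz$ has compact support, the integrand in $\ww_{\xi,\rho}(x)=\int_0^1\zz(\xi+t\rho-x)\,\dt$ and in $\partial_{x_\alpha}\ww_{\xi,\rho}$ vanishes outside a $t$-set of measure $\lesssim 1/|\rho|$, so in fact
\begin{equation*}
  \|\ww_{\xi,\rho}\|_{\LL^\infty}+\|\D\ww_{\xi,\rho}\|_{\LL^\infty}
  \lesssim \min\b(1, |\rho|^{-1}\b),
\end{equation*}
and combining this with the $O(|\rho|+1)$ site count yields the majorant $\sum_\rho m(\rho)\lesssim M^{(1)}$, after which your uniform-convergence and termwise-differentiation argument, and the contraction $\sum_\alpha\rho_\alpha\partial_{x_\alpha}\ww_{\xi,\rho}=\D_\rho\ww_{\xi,\rho}$, go through. (For $\Sa$ itself one can alternatively use $\ww_{\xi,\rho}\ge 0$ and $\sum_\xi\ww_{\xi,\rho}=1$, as the paper does; for the divergence your telescoping identity $\D_\rho\ww_{\xi,\rho}=\zz(\xi-\;\cdot\;)-\zz(\xi+\rho-\;\cdot\;)$ already gives $\sum_\xi|\D_\rho\ww_{\xi,\rho}|\le 2$ and hence the bound $2M^{(1)}$ you indicate.) With this correction your route is a legitimate alternative to the paper's; what the paper's subtraction trick buys in addition is the structurally sharper Lipschitz constant $M^{(2)}\|\D u\|_{L^\infty}$, which vanishes with the displacement gradient, whereas your argument only gives a bound of order $M^{(1)}$.
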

\begin{proof}
  Since $\ww_{\xi,\rho} \geq 0$ and $(\ww_{\xi,\rho})_{\xi\in\L}$ is a
  partition of unity, it is straightforward to show that $|\Sa(u;x)|
  \leq M^{(1)}$ for all $x \in \Om$.

  The rest of the proof is devoted to the assertion that $\Sa$ is
  Lipschitz continuous. First, we compute the Lipschitz constant of
  $\ww_{\xi,\rho}$. Let $L$ be the global Lipschitz constant of $\zz$,
  let $S_x := {\rm supp}(\zz(\;\cdot\; - x))$, and let $J_{x,y} := \{
  s \in (0, |\rho|) \sep \xi+s\frac{\rho}{|\rho|} \in S_x \cap S_y\}$;
  then
  \begin{align*}
    \b|\ww_{\xi,\rho}(x) - \ww_{\xi,\rho}(y)\b| \leq\,& \frac{1}{|\rho|}
    \int_{I_{x,y}} \b| \zz\b(\xi+s\smfrac{\rho}{|\rho|} -
    x\b) - \zz\b(\xi+s\smfrac{\rho}{|\rho|} -
    y\b) \b| \ds \\
    \leq\,& \frac{1}{|\rho|}
    \int_{I_{x,y}} L |x - y| \ds \leq \frac{2L {\rm
        diam}(S_x)}{|\rho|} \, |x - y| \lesssim |\rho|\,|x-y|.
  \end{align*}

  Next, we note that $\sum_{\xi \in\L} (\ww_{\xi,\rho}(x) -
  \ww_{\xi,\rho}(y) ) = 0$, and hence we can rewrite
  \begin{align*}
    \Sa(u; x) - \Sa(u; y) =\,&
    \sum_{\rho\in\Rg} \sum_{\xi\in\L} [\Es_{\xi,\rho}(u) \otimes \rho]
    \b(\ww_{\xi,\rho}(x) - \ww_{\xi,\rho}(y) \b) \\
    =\,&
    \sum_{\rho\in\Rg} \sum_{\xi\in\L} \b[(\Es_{\xi,\rho}(u) - \Es_{\xi,\rho}(0)) \otimes \rho\b]
    \b(\ww_{\xi,\rho}(x) - \ww_{\xi,\rho}(y) \b).
  \end{align*}
  Expanding $\Es_{\xi,\rho}$, using that fact that
  \begin{displaymath}
    \# \{ \xi \in \L \sep \ww_{\xi,\rho}(x) \neq 0 \text{ and }
    \ww_{\xi,\rho}(y) \neq 0 \} \lesssim |\rho|,
  \end{displaymath}
  and the Lipschitz estimate for $\ww_{\xi,\rho}$, we obtain
  \begin{displaymath}
    \b|\Sa(u; x) - \Sa(u; y)\b| \lesssim M^{(2)} \| \D u \|_{L^\infty} |x
    - y|.
  \end{displaymath}

  The formula for the divergence \eqref{eq:w:divSa} is easy to establish.
\end{proof}

We see from the formula for ${\rm div} \Sa$ that we will require
symmetries of the weighting functions $\D_\rho \ww_{\xi,\rho}$, which
we establish next.

\begin{lemma}
  Let $x \in \R^d$ and $\rho \in \Rg$, then
  \begin{align}
    \label{eq:dS_symm_0}
    \sum_{\xi\in\L} \D_\rho \ww_{\xi,\rho}(x) =\,& 0, \\
    \label{eq:dS_symm_1}
    \sum_{\xi\in\L} \D_\rho \ww_{\xi,\rho}(x) \, (\xi-x) =\,&\rho,
    \quad \text{and} \\
    \label{eq:dS_symm_2}
    \sum_{\xi \in \L} \D_\rho \ww_{\xi,\rho}(x) \, (\xi-x) \otimes
    (\xi-x)  =\,& - \rho \otimes \rho.
  \end{align}
\end{lemma}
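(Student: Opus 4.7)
The starting observation is that the definition $\ww_{\xi,\rho}(x) = \int_0^1 \zz(\xi+t\rho-x)\dt$ and the fundamental theorem of calculus give the ``telescoping'' identity
\begin{equation*}
  \D_\rho \ww_{\xi,\rho}(x) = -\int_0^1 \frac{\dd}{\dd t} \zz(\xi+t\rho-x)\dt = \zz(\xi-x) - \zz(\xi+\rho-x),
\end{equation*}
since $\D_\rho$ applied to $x\mapsto \zz(\xi+t\rho-x)$ produces $-\smfrac{\dd}{\dd t}\zz(\xi+t\rho-x)$. Every one of the three identities will then be obtained by summing this closed-form expression in $\xi$ and using the standing properties of $\zz$: the partition-of-unity/affine reproduction property $\sum_{\xi\in\L}(a+b\cdot\xi)\zz(x-\xi) = a+b\cdot x$, together with the symmetry $\zz(-x)=\zz(x)$.

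For \eqref{eq:dS_symm_0} I would sum the telescoping identity and shift the index $\xi'=\xi+\rho$ in the second term; both sums equal $1$ by the partition-of-unity property (using $\zz(\xi-x)=\zz(x-\xi)$), giving cancellation. For \eqref{eq:dS_symm_1} the same shift plus the affine reproduction property yields, after substitution,
\begin{equation*}
  \sum_{\xi\in\L}[\zz(\xi-x)-\zz(\xi+\rho-x)](\xi-x) = \sum_{\xi\in\L}\zz(\xi-x)(\xi-x) - \sum_{\xi'\in\L}\zz(\xi'-x)(\xi'-x-\rho),
\end{equation*}
and the first moment $\sum_{\xi}\zz(\xi-x)(\xi-x)$ vanishes because the affine reproduction property applied to $v(x')=x'-x$ gives $\sum_\xi \zz(\xi-x)\xi = x$; hence only the $\rho\sum_{\xi'}\zz(\xi'-x) = \rho$ piece survives.

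For \eqref{eq:dS_symm_2} I would apply the same change of variables in the second term, expand the tensor product $(\xi'-\rho-x)\otimes(\xi'-\rho-x)$ into four pieces, and observe that the purely quadratic pieces cancel. The cross terms contribute $\rho\otimes\sum_{\xi'}\zz(\xi'-x)(\xi'-x)$ and its transpose, both of which vanish by the same first-moment identity used above; the remaining $\rho\otimes\rho$ piece, multiplied by $\sum_{\xi'}\zz(\xi'-x)=1$, gives precisely $-\rho\otimes\rho$.

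There is no genuine obstacle here: the computation is entirely mechanical once the telescoping identity for $\D_\rho\ww_{\xi,\rho}$ is in hand. The only subtle point is that the second-moment $\sum_\xi \zz(\xi-x)(\xi-x)\otimes(\xi-x)$ is \emph{not} determined by affine reproduction alone, so one must avoid invoking it; writing everything in terms of first moments via the shift $\xi'=\xi+\rho$ circumvents this, which is why the argument works using only the hypotheses stated in \S\ref{sec:interp}.
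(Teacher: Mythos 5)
Your proposal is correct and follows essentially the same route as the paper: compute $\D_\rho\ww_{\xi,\rho}(x)=\zz(\xi-x)-\zz(\xi+\rho-x)$ via the fundamental theorem of calculus, then shift the summation index and invoke the affine reproduction property (so that $\sum_\xi\zz(\xi-x)=1$ and the first moment $\sum_\xi\zz(\xi-x)(\xi-x)$ vanishes), with the unknown second moment cancelling between the two shifted sums. Your explicit remark that the second moment is not determined by affine reproduction and must be made to cancel is exactly the point the paper's computation exploits implicitly.
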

\begin{proof}
  First, we compute $\D_\rho \ww_{\xi,\rho}(x)$:
  \begin{align}
    \notag
    \D_\rho \ww_{\xi,\rho}(x) =\,& - \int_0^1 \D\zz(\xi+t\rho-x) \rho
    \dt \\
    \label{eq:dS:Drhoww}
    =\,& - \int_0^1 \smfrac{\dd}{\dt} \zz(\xi+t\rho-x) \dt =
    \zz(\xi-x) - \zz(\xi+\rho-x).
  \end{align}
  Using this identity and the fact that the nodal interpolant
  reproduces affine functions, we immediately obtain
  \eqref{eq:dS_symm_0} and \eqref{eq:dS_symm_1}

  To prove \eqref{eq:dS_symm_2}, consider
  \begin{align*}
    & \sum_{\xi \in \Z^d} \D_\rho \ww_{\xi,\rho}(x) \, (\xi-x) \otimes
    (\xi-x) \\
    =\,& \sum_{\xi \in \Z^d} \b[\zz(\xi-x) - \zz(\xi+\rho-x)\b]\, (\xi-x) \otimes
    (\xi-x) \\
    =\,& \sum_{\xi \in \Z^d}\zz(\xi-x) \b[ (\xi-x) \otimes
    (\xi-x) - (\xi-\rho-x) \otimes (\xi-\rho-x) \b] \\
    =\,&  \sum_{\xi \in \Z^d}\zz(\xi-x) \b[
    \rho \otimes (\xi-x) + (\xi-x) \otimes \rho - \rho\otimes \rho \b].
  \end{align*}
  Using again the fact that the nodal interpolant reproduces affine
  functions, we obtain
  \begin{displaymath}
     \sum_{\xi \in \Z^d} \D_\rho \ww_{\xi,\rho}(x) \, (\xi-x) \otimes
    (\xi-x) = \rho \otimes (x - x) + (x-x) \otimes \rho - \rho \otimes
    \rho = - \rho \otimes \rho. \qedhere
  \end{displaymath}
\end{proof}

The key step towards proving \eqref{eq:w:prf_frc_cons} is the next
result, which is analogous to Lemma \ref{th:stress:c2}.

\begin{lemma}
  Let $u \in \WW^{4,\infty}_\loc \cap \Bc$; then
  \begin{align*}
    \b| {\rm div} \Sa(u; x) - {\rm div} \Scb(u; x) \b| \lesssim\,&
    \sum_{\bfrho\in\Rg^4} \md^{(\infty)}(\bfrho) \prod_{j = 2}^4 \| \D^2 u
    \|_{L^\infty(x+\nu_{\rho_1\rho_j})}  \\
    & + \sum_{\bfrho\in\Rg^3} \md^{(\infty)}(\bfrho)  \| \D^3 u
    \|_{L^\infty(x+\nu_{\rho_1,\rho_2})} \| \D^2 u
    \|_{L^\infty(x+\nu_{\rho_1,\rho_3})} \\
    & + \sum_{\bfrho\in\Rg^2}
    \md^{(\infty)}(\bfrho) \| \D^4 u \|_{L^\infty(x+\nu_{\rho_1,\rho_2})}.
  \end{align*}
  where $\md^{(\infty)}$ is defined in \S\ref{sec:intro:V:decya}.
\end{lemma}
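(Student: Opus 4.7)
The plan is to repeat the strategy of Theorem \ref{th:stress:c2}, but applied to the divergence formula \eqref{eq:w:divSa} in place of the stress formula \eqref{eq:stress:defn_Sa}, carrying the Taylor expansion one order further and invoking the three symmetry identities \eqref{eq:dS_symm_0}--\eqref{eq:dS_symm_2} for $\D_\rho\ww_{\xi,\rho}$ (rather than the two identities of Lemma~\ref{th:stress:phirho_Prop}) together with the inversion symmetry \eqref{eq:intro:ptsymm_DV} of $V$. Writing $V_\bfrho := V_\bfrho(\D u(x)\cdot\Rg)$ and starting from
\[
  \text{div}\,\Sa(u;x) - \text{div}\,\Scb(u;x)
  = \sum_{\rho\in\Rg}\B[ \sum_{\xi\in\L}\Es_{\xi,\rho}(u)\,\D_\rho \ww_{\xi,\rho}(x) - \D_\rho \b(V_\rho\b)\B],
\]
I would use \eqref{eq:dS_symm_0} to subtract a zero and rewrite the second summand using the appropriate moments of $\D_\rho\ww_{\xi,\rho}$.

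Next, I would Taylor expand $\Es_{\xi,\rho}(u)-V_\rho$ through third order in $\bfrho$: at first order this contributes $\sum_\vsig V_{\rho\vsig}[\,\cdot\,,D_\vsig u(\xi)-\D_\vsig u(x)]$ with a remainder $E_1$ controlled by products of two second derivatives of $u$ (as in \eqref{eq:stress:c2_8}), and I would then expand $D_\vsig u(\xi)-\D_\vsig u(x)$ along the segments from $x$ to $\xi$ and from $\xi$ to $\xi+\vsig$ up to third derivatives of $u$, producing terms of the form $\D_{\xi-x}\D_\vsig u(x)$, $\smfrac12\D_\vsig^2 u(x)$, $\smfrac12\D_{\xi-x}^2\D_\vsig u(x)$, $\smfrac12\D_{\xi-x}\D_\vsig^2 u(x)$ and $\smfrac16\D_\vsig^3 u(x)$, with a remainder controlled by $\|\D^4 u\|_{L^\infty(x+\nu_{\rho,\vsig})}$. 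The analogue of $E_3$ then picks up the extra $|\rho|$ factor inherent in $\md^{(\infty)}$ rather than $\ms^{(\infty)}$, because the divergence contributes one additional $|\rho|$ via $\D_\rho\ww_{\xi,\rho}$.

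After inserting these expansions into the sum and exchanging orders, the leading polynomial terms in $\xi-x$ can be evaluated in closed form using the three identities
$\sum_\xi \D_\rho\ww_{\xi,\rho}(x)=0$, $\sum_\xi \D_\rho\ww_{\xi,\rho}(x)(\xi-x)=\rho$, and $\sum_\xi \D_\rho\ww_{\xi,\rho}(x)(\xi-x)\otimes(\xi-x)=-\rho\otimes\rho$. The resulting contribution in $\rho,\vsig$ will consist of a sum of monomials in $\rho,\vsig$ paired with $V_{\rho\vsig}[\cdot,\dots]$ and (for the second-order derivatives of $V$) a monomial of degree three in $\{\rho,\vsig\}$ paired with the appropriate third derivative of $u$; after symmetrizing $\rho\mapsto-\rho$ and $\vsig\mapsto-\vsig$ and invoking \eqref{eq:intro:ptsymm_DV} (which flips the sign when the degree in $\bfrho$ is odd), every remaining principal term is odd in the combined substitution and therefore vanishes. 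This is the analogue of the calculation that gave $\Sa-\Scb=E_4$ in the proof of Theorem~\ref{th:stress:c2}, only one order higher; the extra $|\rho|$ weight converts $\ms^{(\infty)}$ to $\md^{(\infty)}$.

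Finally I would repackage the remainders $E_1$ and $E_3$ (and the third-order remainder from the new $\vsig$-expansion) into the three advertised sums: the $(\D^2u)^3$ product arises from the squared remainder of $E_1$ times the $\rho$ in the divergence (giving a four-index sum over $\Rg^4$); the mixed $\|\D^3 u\|\|\D^2 u\|$ term arises from the cross term between the first-order Taylor remainder and the new second-order piece, yielding a three-index sum; and the $\|\D^4 u\|$ term arises from the highest-order remainder of the $\vsig$-expansion, giving the two-index sum, exactly as in the claim. As in Theorem~\ref{th:stress:c2}, I would use $\sum_\xi\ww_{\xi,\rho}|f_\xi|\leq \max_{\xi:\ww_{\xi,\rho}(x)\neq 0}|f_\xi|$ together with the inclusion $\nu_{\xi,\vsig}-x\subset\nu_{\rho,\vsig}$ to pass from per-site $L^\infty$ bounds to the stated neighbourhoods $x+\nu_{\rho_1,\rho_j}$. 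The main obstacle will be the bookkeeping: carefully verifying that all principal-order terms really do cancel after symmetrization (this requires pairing odd-degree monomials in $(\xi-x)$ against the odd-degree moments correctly), and organizing the several Taylor remainders so that each falls cleanly into one of the three sums with the correct $\md^{(\infty)}$ weights.
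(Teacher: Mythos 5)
Your overall strategy is the one the paper follows: expand $\Es_{\xi,\rho}$ around $V_\rho(\D u(x)\cdot\Rg)$ one order further than in Theorem~\ref{th:stress:c2}, sum against $\D_\rho\ww_{\xi,\rho}$ using \eqref{eq:w:divSa}, evaluate the polynomial moments of $\xi-x$ via \eqref{eq:dS_symm_0}--\eqref{eq:dS_symm_2}, and cancel the surviving principal terms by symmetry of $V$. (The paper uses $V_{\rho\vsig}=V_{\vsig\rho}$ for the linear block and $V_{\rho\vsig\tau}=-V_{-\rho,-\vsig,-\tau}$ for the quadratic block; your inversion argument via \eqref{eq:intro:ptsymm_DV} works for both, so this is not a genuine difference.)

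There is, however, one step where the proposal as written would not deliver the stated constants, and it is exactly the step the paper singles out as delicate. In the quadratic-in-$V$ block $\sum_{\vsig,\tau}V_{\rho\vsig\tau}[\,\cdot\,,\Delta_\vsig,\Delta_\tau]$ with $\Delta_\vsig:=D_\vsig u(\xi)-\D_\vsig u(x)$, one writes $\Delta_\vsig=\D_{\xi-x}\D_\vsig u+\smfrac12\D_\vsig^2u+\delta'_{\xi,\vsig}$ as in \eqref{eq:dS_24}, with $|\delta'_{\xi,\vsig}|\lesssim(|\rho|+|\vsig|)^2|\vsig|\,\|\D^3u\|$. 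The principal$\times$principal part cancels by symmetry and the principal$\times\delta'$ cross terms give your three-index $\|\D^3u\|\,\|\D^2u\|$ sum; but your account omits the $\delta'\times\delta'$ term. Bounding both of its factors by the third-derivative estimate produces a weight $m(\bfrho)|\bfrho|^4/|\rho_1|$, one power of $|\bfrho|$ worse than $\md^{(\infty)}(\bfrho)=m(\bfrho)|\bfrho|^3/|\rho_1|$, which (as the paper notes before \eqref{eq:dS_35}) would impose decay excluding the Lennard--Jones potential. The necessary extra idea is to bound one of the two factors by the weaker estimate $|\delta'_{\xi,\tau}|\lesssim(|\rho|+|\tau|)|\tau|\,\|\D^2u\|$, turning $\delta'\times\delta'$ into another admissible $\|\D^3u\|\,\|\D^2u\|$ contribution. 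A secondary issue is your bookkeeping of the expansion orders: the $\Rg^4$ sum with $\prod_{j=2}^4\|\D^2u\|$ must come from the cubic remainder of the expansion of $V$ itself (three increments $\Delta$, each of size $O(\D^2u)$, cf.\ \eqref{eq:dS:10}), not from ``the squared remainder of $E_1$''; if the $V$-expansion is stopped at first order with a remainder quadratic in the increments, that remainder is a three-index sum with only two factors of $\|\D^2u\|$, which is not one of the three advertised terms.
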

\begin{proof}
  Recall from the proof of Lemma \ref{th:stress:c2} the definition of
  the sets $\nu_{\xi,\vsig}$ and $\nu_{\rho,\vsig}$ and the quantities
  $\eps_{\xi,\vsig}$ and $\delta_{\xi,\vsig}$. In addition, we define
  $\mu_{\xi,\vsig} := \| \D^4 u \|_{L^\infty(\nu_{\xi,\vsig})}$.

  Recall from \eqref{eq:w:divSa} the algebraic expression for ${\rm
    div} \Sa$.  Since $\Scb = \sum_{\rho\in\Rg} V_\rho \otimes
  \rho$, where $V_\rho := V_\rho(\D u(x) \cdot \Rg)$, we also obtain
  \begin{equation}
    \label{eq:dS_divSc}
    {\rm div} \Scb(x) = \sum_{\rho,\vsig \in \Rg} V_{\rho\vsig}
    [\;\cdot\;, \D_\rho\D_\vsig u(x)].
  \end{equation}

  To proceed, we expand $\Es_{\xi,\rho}$ in \eqref{eq:w:divSa} to
  third order:
  \begin{align}
    \notag
    \Es_{\xi,\rho} =\,& V_\rho + \B\{\sum_{\vsig\in\Rg}
    V_{\rho\vsig} [\;\cdot\;, D_\vsig u(\xi)-\D_\vsig u(x)]\B\} \\
    \notag
    & + \B\{\sum_{\vsig,\tau\in\Rg} V_{\rho\vsig\tau}[\;\cdot\;,
    D_\vsig u(\xi) - \D_\vsig u(x), D_\tau u(\xi) - \D_\tau u(x)]\B\} +
    E_2(\xi,\rho), \\
    \label{eq:dS:10}
    =:\,& V_\rho + T_1(\xi,\rho) + E_1(\xi,\rho) + E_2(\xi,\rho) \\
    \notag
    \text{where}\quad |E_2(\xi,\rho)| \lesssim\,&
    \sum_{\rho_2,\rho_3,\rho_4 \in \Rg} \smfrac{m(\rho,
    \rho_2,\rho_3,\rho_4) }{|\rho|}
    \prod_{i=2}^4 (|\rho|+|\rho_i|) \, \eps_{\xi,\rho_i},
  \end{align}
  where the estimate for $E_2$ is fully analogous to the estimate of
  $E_1$ in \eqref{eq:stress:c2_8} (taking into account the finite
  support of $\ww_{\xi,\rho}$). Summing over $\xi\in\L, \rho\in\Rg$,
  employing the fact that $\sum_{\xi\in\L} |\D_\rho \ww_{\xi,\rho}|
  \leq 2$ (this is an immediate consequence of \eqref{eq:dS:Drhoww}),
  and otherwise arguing as in the proof of Theorem \ref{th:stress:c2},
  we obtain
  \begin{align}
    \notag
    |E_2'| :=\,& \sum_{\rho\in\Rg}\sum_{\xi\in\L} |\D_\rho \ww_{\xi,\rho}|
    |E_2(\xi,\rho)|
    \lesssim \sum_{\bfrho\in\Rg^4} \frac{m(\bfrho) |\bfrho|^3}{\prod_{i = 1}^4 |\rho_i|}
    \sum_{\xi\in\L} |\D_{\rho_1} \ww_{\xi,\rho_1}| \prod_{i = 2}^4 |\rho_i|
    \eps_{\xi,\rho_i} \\
    \notag
    \lesssim\,& \sum_{\bfrho\in\Rg^4} \frac{m(\bfrho)
    |\bfrho|^3}{|\rho_1|} \prod_{i = 2}^4
    \max_{\substack{\xi\in\L \\ \ww_{\xi,\rho_1}(x) \neq 0}}
    \eps_{\xi,\rho_i} \, \sum_{\xi\in\L} |\D_{\rho_1} \ww_{\xi,\rho_1}|\\
    \label{eq:dS:E2'}
    \lesssim\,& \sum_{\bfrho\in\Rg^4} \md^{(\infty)}(\bfrho)
    \prod_{i = 2}^4 \| \D^2 u \|_{L^\infty(x+\nu_{\rho_1,\rho_i})}.
  \end{align}

  Moreover, using \eqref{eq:dS_symm_0}, it is straightforward to treat
  the first term in \eqref{eq:dS:10}:
  \begin{equation}
    \label{eq:dS:15}
    \sum_{\rho\in\Rg} \sum_{\xi \in \L} \D_\rho\ww_{\xi,\rho}(x)
    V_\rho = 0.
  \end{equation}
  We will now independently estimate the two remaining groups
  involving $T_1$ and $E_1$.

  {\it Estimating $T_1$: } We expand
  \begin{align*}
    D_\vsig u(\xi)-\D_\vsig u(x)
    % =\,& \smfrac12 \b(\D_{\xi-x+\vsig}^2
    % - \D_{\xi-x}^2\b) u(x) + \smfrac16\b(\D_{\xi-x+\vsig}^3 -
    % \D_{\xi-x}^3\b) u(x) + E_3(\xi,\vsig) \\
    =\,& \b(\D_{\xi-x}\D_\vsig + \smfrac12 \D_{\vsig}^2 + \smfrac12 \D_{\xi-x}^2 \D_\vsig + \smfrac12 \D_{\xi-x}
    \D_\vsig^2 + \smfrac16 \D_\vsig^3\b) u(x) + E_3(\xi,\vsig), \\
    \text{where} \quad |E_3(\xi,\vsig)| \lesssim\,& (|\rho|+|\vsig|)^3
    |\vsig| \mu_{\xi,\vsig}.
  \end{align*}
  Summing over $\xi,\rho$, applying
  \eqref{eq:dS_symm_0}--\eqref{eq:dS_symm_2}, and applying the
  symmetry $V_{\rho\vsig} = V_{\vsig\rho}$, yields
  \begin{align}
    \notag
    \sum_{\rho\in\Rg} \sum_{\xi\in\L} \D_\rho \ww_{\xi,\rho}
    T_1(\xi,\rho) =\,& \sum_{\rho,\vsig\in\Rg} V_{\rho\vsig}\b[\;\cdot\;, (\D_\rho\D_\vsig - \smfrac12 \D_\rho^2
    \D_\vsig + \smfrac12 \D_\rho\D_\vsig^2) u(x)] + E_3' \\
    \label{eq:dS:20}
    =\,& \sum_{\rho,\vsig\in\Rg} V_{\rho\vsig}
    [\;\cdot\;, \D_\rho\D_\vsig u] + E_3' = {\rm div} \Scb + E_3', \\
    \notag
    \text{where} \quad |E_3'| \lesssim\,& \sum_{\bfrho\in\Rg^2}
    \md^{(\infty)}(\bfrho) \, \| \D^4 u \|_{L^\infty(x+\nu_{\rho_1,\rho_2})},
  \end{align}
  and where the estimate of $|E_3'|$ is analogous to the estimate of
  $|E_2'|$ in \eqref{eq:dS:E2'}.

  {\it Estimating $E_1$: } To estimate $E_1$ we use the simpler
  expansion \eqref{eq:stress:c2_10}, which can be written as
  \begin{equation}
    \label{eq:dS_24}
    \begin{split}
      D_\vsig u(\xi) - \D_\vsig u =\,& \D_{\xi-x}\D_\vsig u + \smfrac12
      \D_\vsig^2 u + \delta_{\xi,\vsig}', \\
      \text{where} \quad |\delta_{\xi,\vsig}'| \lesssim\,&
      (|\rho|+\vsig|)^2 |\vsig| \delta_{\xi,\vsig}.
    \end{split}
  \end{equation}
  We apply again \eqref{eq:dS_symm_0}--\eqref{eq:dS_symm_2}, followed
  by the symmetry $V_{\rho\vsig\tau} = - V_{-\rho,-\vsig,-\tau}$, to
  obtain
  \begin{align}
    \notag
    \sum_{\rho\in\Rg} \sum_{\xi\in\L} \D_\rho \chi_{\xi,\rho}
    E_1(\xi,\rho) =\,& \sum_{\rho,\vsig,\tau\in\Rg} \sum_{\xi\in\L}
    \D_\rho \ww_{\xi,\rho} V_{\rho\vsig\tau}\b[\;\cdot\;,
    \D_{\xi-x}\D_\vsig u + \smfrac12 \D_\vsig^2 u, \D_{\xi-x}\D_\tau u
    + \smfrac12 \D_\tau^2 u\b] + E_1' \\
    \notag
    =\,& \sum_{\rho,\vsig,\tau\in\Rg} \B( -\smfrac12
    V_{\rho\vsig\tau}[\;\cdot\;, \D_\rho\D_\vsig u,
    \D_\rho\D_\tau u]
    + \smfrac12 V_{\rho\vsig\tau}[\;\cdot\;, \D_\rho\D_\vsig u,
    \D_\tau^2 u]  \\[-2mm]
    \notag
    & \hspace{2.5cm} + \smfrac12 V_{\rho\vsig\tau}[\;\cdot\;,\D_\vsig^2u,
    \D_\rho\D_\tau u] + \smfrac14
    V_{\rho\vsig\tau}[\;\cdot\;,\D_\vsig^2 u, \D_\tau^2 u] \B) + E_1' \\
    \label{eq:dS:25}
    =\,& E_1',
  \end{align}
  where
  \begin{align*}
    E_1' =\,& \sum_{\rho,\vsig,\tau\in\Rg} \sum_{\xi\in\L}
    \D_\rho \ww_{\xi,\rho} V_{\rho\vsig\tau}\b[\;\cdot\;,
    \D_{\xi-x}\D_\vsig u + \smfrac12 \D_\vsig^2 u, \delta_{\xi,\tau}'
    u\b] \\
    & + \sum_{\rho,\vsig,\tau\in\Rg} \sum_{\xi\in\L}
    \D_\rho \ww_{\xi,\rho} V_{\rho\vsig\tau}\b[\;\cdot\;,
    \delta_{\xi,\vsig}', \D_{\xi-x}\D_\tau u
    + \smfrac12 \D_\tau^2 u\b] \\
    & + \sum_{\rho,\vsig,\tau\in\Rg} \sum_{\xi\in\L}
    \D_\rho \ww_{\xi,\rho}
    V_{\rho\vsig\tau}\b[\;\cdot\;,\delta_{\xi,\vsig}',
    \delta_{\xi,\tau}'\b] \\
    =:\,& E_{1,1}' + E_{1,2}' + E_{1,3}'.
  \end{align*}
  The first and second terms can be treated in a straightforward
  manner, employing \eqref{eq:dS_24}:
  \begin{align*}
    |E_{1,1}'| \lesssim\,& \sum_{\bfrho\in\Rg^3}
    \md^{(\infty)}(\bfrho) |\D^2 u(x)| \|\D^3 u
    \|_{L^\infty(x+\nu_{\rho_1,\rho_3})}, \quad \text{and} \\
    |E_{1,2}'| \lesssim\,& \sum_{\bfrho\in\Rg^3}
    \md^{(\infty)}(\bfrho)|\D^2 u(x)| \|\D^3 u
    \|_{L^\infty(x+\nu_{\rho_1,\rho_2})}.
  \end{align*}
  However, the term $E_{1,3}'$ must be treated more carefully. Note
  that simply using \eqref{eq:dS_24} to estimate
  \begin{displaymath}
        |E_{1,3}'| \lesssim \sum_{\bfrho\in\Rg^3}
    \frac{m(\bfrho)|\bfrho|^4}{|\rho_1|}  \|\D^3 u
    \|_{L^\infty(x+\nu_{\rho_1,\rho_2})} \|\D^3 u
    \|_{L^\infty(x+\nu_{\rho_1,\rho_3})}
  \end{displaymath}
  would impose a decay on the interaction potential that would rule
  out the Lennard-Jones potential. Instead, we use \eqref{eq:dS_24}
  only for the terms $\delta_{\xi,\vsig}'$ and the weaker estimate
  \begin{align*}
    |\delta_{\xi,\tau}'| =\,& \b| D_{\tau} u(\xi) - \D_\tau u -
    \D_{\xi-x} \D_\tau u - \smfrac12 \D_\tau^2 u \b|
    \lesssim (|\rho|+|\tau|) |\tau| \eps_{\xi, \tau},
  \end{align*}
  which leads to
  \begin{displaymath}
    % \label{eq:dS_30}
    |E_{1,3}'| \lesssim \sum_{\bfrho\in\Rg^3}
    \frac{m(\bfrho)|\bfrho|^3}{|\rho_1|}  \|\D^3 u
    \|_{L^\infty(x+\nu_{\rho_1,\rho_2})} \|\D^2 u
    \|_{L^\infty(x+\nu_{\rho_1,\rho_3})}.
  \end{displaymath}
  Note that the coefficients in this last estimate are precisely
  $\md^{(\infty)}(\bfrho)$, which are controlled also for the
  Lennard-Jones case. Combining the estimates for $E_{1,1}', E_{1,2}'$
  and $E_{1,3}'$, we arrive at
  \begin{equation}
    \label{eq:dS_35}
    |E_1'| \lesssim \sum_{\bfrho\in\Rg^3}
    \md^{(\infty)}(\bfrho) \| \D^3 u
    \|_{L^\infty(x+\nu_{\rho_1,\rho_2})} \| \D^2 u \|_{L^\infty(x+\nu_{\rho_1,\rho_3})}.
  \end{equation}
  Note that we have seemingly ignored the term $E_{1,1}'$, however,
  due to the symmetry $m(\rho_1,\rho_2,\rho_3) =
  m(\rho_1,\rho_3,\rho_2)$, this term is in fact included.

  {\it Combining the estimates: } Combining \eqref{eq:dS:E2'},
  \eqref{eq:dS:20} and \eqref{eq:dS_35} we obtain the desired upper
  bound on $|{\rm div} \Sa - {\rm div} \Scb|$.
\end{proof}

Repeating the arguments of the proof of Lemma
\ref{th:stress:moderr_negSob} almost verbatim, we obtain the following
global consistency error estimate.

\begin{lemma}
  \label{th:dS_global}
  Let $u \in \WWh^{4,p}$, $\tilu := \zz \ast u|_\L$ and $v \in
  \ell^{p'}$; then
  \begin{align*}
    % \label{eq:2}
    \b|\b\< \del\Ea(\tilu), \tilv \b\> - \b\< \del\Ec(u), \barv \b\>
    \b| \lesssim \b(\,& \Md^{(2,p)} \| \D^4 u \|_{L^p} + \Md^{(3,p)} \| \D^3 u \|_{L^{3p/2}} \| \D^2 u \|_{L^{3p}} \\
    & + \Md^{(4,p)} \| \D^2 u \|_{L^{3p}}^3 \b) \| \barv
    \|_{L^{p'}}.
  \end{align*}
\end{lemma}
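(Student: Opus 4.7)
The plan is to mirror the proof of Lemma~\ref{th:stress:moderr_negSob} almost verbatim, substituting the pointwise consistency bound on ${\rm div}\,\Sa - {\rm div}\,\Scb$ just established for the bound on $\Sa - \Scb$ from Theorem~\ref{th:stress:c2}, and performing one integration by parts so that the dual pairing is tested against $\barv$ rather than $\D \barv$.

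First, I would represent the difference in stress form. By Proposition~\ref{th:Sa} and the identity~\eqref{eq:delEc} for $\del\Ec$,
\begin{displaymath}
  \b\< \del\Ea(\tilu), \tilv \b\> - \b\< \del\Ec(u), \barv \b\> = \int_{\R^d} \b[\Sa(\tilu; x) - \Scb(u; x)\b] : \D \barv(x) \dx.
\end{displaymath}
Both $\Sa(\tilu;\cdot)$ and $\Scb(u;\cdot)$ are Lipschitz in $x$, so integration by parts rewrites this as $-\int ({\rm div}\,\Sa(\tilu) - {\rm div}\,\Scb(u)) \cdot \barv\,\dx$, and H\"older's inequality reduces the task to bounding $\|{\rm div}\,\Sa(\tilu) - {\rm div}\,\Scb(u)\|_{L^p}$. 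I would first establish the estimate for $v \in \Usz$ and then extend by density as in the earlier analogue.

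Next, I would insert a triangle inequality
\begin{displaymath}
  \|{\rm div}\,\Sa(\tilu) - {\rm div}\,\Scb(u)\|_{L^p} \leq \|{\rm div}\,\Sa(\tilu) - {\rm div}\,\Scb(\tilu)\|_{L^p} + \|{\rm div}\,\Scb(\tilu) - {\rm div}\,\Scb(u)\|_{L^p}.
\end{displaymath}
The second summand is a purely continuum error, controlled via the chain-rule expansion~\eqref{eq:dS_divSc} for ${\rm div}\,\Scb$: the differences $\D^j \tilu - \D^j u$ for $j=1,2$ are bounded by Lemma~\ref{th:convolution_error}, which gains two derivatives, producing a $\Md^{(2,p)} \|\D^4 u\|_{L^p}$ contribution and a mixed Lipschitz-type $\Md^{(3,p)} \|\D^3 u\|_{L^{3p/2}} \|\D^2 u\|_{L^{3p}}$ contribution after one H\"older split. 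For the first summand I would insert the pointwise divergence bound just proved and estimate each of its three $\bfrho$-sums in $L^p$ by the weighted-H\"older mechanism from Lemma~\ref{th:stress:moderr_negSob}: introduce auxiliary weights $w_\bfrho$, apply the generalized H\"older inequality to separate the $\bfrho$-sum from the $x$-integral, use Lemma~\ref{th:conv_est} to bound $\int \|\zz \ast \D^\ell u\|_{L^\infty(x+\nu_{\rho_1\rho_j})}^q \dx$ by $\vol(\nu'_{\rho_1\rho_j}) \|\D^\ell u\|_{L^q}^q$, and finally optimize $w_\bfrho$ so that the remaining $\bfrho$-sum collapses to exactly $\Md^{(j,p)}$.

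The main obstacle is the trilinear term $\Md^{(4,p)} \|\D^2 u\|_{L^{3p}}^3$, which has no analogue in the stress estimate of Lemma~\ref{th:stress:moderr_negSob}. It requires a three-factor generalized H\"older with matched exponents $\frac{1}{3p} + \frac{1}{3p} + \frac{1}{3p} = \frac{1}{p}$, and the weight optimization must be arranged so that $\vol(\nu_{\rho_1\rho_j})$ appears only to the power $\frac{1}{(j-1)p} = \frac{1}{3p}$ in the definition of $\md^{(p)}(\bfrho)$; a coarser application of H\"older would raise $\vol$ to a larger power and spoil the finiteness of $\Md^{(4,p)}$ for Lennard--Jones-type potentials. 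The bilinear group is handled by the same template with two factors and exponents $\frac{1}{3p/2} + \frac{1}{3p} = \frac{1}{p}$, and the linear group reduces to the computation already carried out around~\eqref{eq:stress:moderr:110}; assembling the three contributions yields the stated bound.
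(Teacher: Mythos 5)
Your proposal is correct and follows essentially the same route as the paper, which simply states that the result is obtained by repeating the proof of Lemma~\ref{th:stress:moderr_negSob} almost verbatim with the pointwise estimate on ${\rm div}\,\Sa - {\rm div}\,\Scb$ in place of Theorem~\ref{th:stress:c2}. Your explicit integration by parts, triangle-inequality splitting, and weighted H\"older optimization (including the exponent bookkeeping $\smfrac{1}{3p}+\smfrac{1}{3p}+\smfrac{1}{3p}=\smfrac1p$ for the trilinear term) are exactly the intended details.
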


\medskip
We are now in a position to prove \eqref{eq:w:prf_frc_cons}.

\begin{proof}[Proof of \eqref{eq:w:prf_frc_cons}]
  Recall that
  \begin{displaymath}
    (\beta, \D\dot{w}) = \< \del\Ec(u^\c), \dot{w} \> - \< \del\Ea(z),
    \dot{e} \>,
  \end{displaymath}
  where $\tilw = e$ and $z = (\zz \ast u^\c)|_\L$. Hence, Lemma
  \ref{th:dS_global} and \eqref{eq:w:cb_reg} yield
  \begin{displaymath}
    \b|(\beta, \D\dot{w})\b| \leq C \eps^{3-d/2} \| \dot w \|_{L^2},
  \end{displaymath}
  where $C$ depends only on $\Md^{(j,2)}$, $j = 2, 3, 4$, and is
  otherwise generic. This proves \eqref{eq:w:prf_frc_cons_pre}, from
  which \eqref{eq:w:prf_frc_cons} follows immediately after
  application of Cauchy's inequality.
\end{proof}

\subsection{Estimating the nonlinearity}
\label{sec:w:nonlin}
In this section, we prove two estimates on the term $\< \dot{H} e, e
\>$ occurring in the error equation \eqref{eq:w:err_eqn_2}.

\begin{proof}[Proof of \eqref{eq:w:prf_nonlin}]
  Recall \eqref{eq:w:Ea_smooth_t1}, which states that $z(t) + \theta
  e(t) \in \Ba$ for all $t \in [0, s]$ and $\theta \in [0, 1]$.

  We write out $\< \dot{H} e, e\>$ explicitly and exchange the order
  of integration (this is justified since $M^{(3)}$ is finite, which
  implies that the integrand is uniformly bounded),
  \begin{align*}
    \int_0^s \< \dot{H} e, e \> \dt =\,& \int_0^s \int_0^1
    \del^3\Ea(z+\theta e)[\dot{z} + \theta \dot{e}, e, e] \dd\theta\, \dt \\
    =\,& \int_0^1 \int_0^s \del^3 \Ea(z+\theta e)[\dot{z}, e, e]\, \dt
    \dd\theta
    + \int_0^1 \theta \int_0^s \del^3\Ea(z+\theta e)[\dot{e}, e, e]
    \dt\, \dd\theta.
  \end{align*}
  The first group on the right-hand side is easily estimated, using
  \eqref{eq:bound_delj} and \eqref{eq:w:cb_reg},
  \begin{displaymath}
    \del^3 \Ea(z+\theta e)[\dot{z}, e, e] \lesssim M^{(3)} \| \D \dot{z}
    \|_{L^\infty} \| \D e \|_{L^2}^2 \lesssim \eps M^{(3)} \| \D\dot U^\c \|_{L^\infty} E^2.
  \end{displaymath}

  To estimate the second group, we focus on the inner integral
  only. Since $\del^3\Ea(\;\cdot\;)[v_1, v_2, v_3]$ is invariant under
  permutation of the arguments $v_1, v_2, v_3$ it follows that
  \begin{displaymath}
    \smfrac{\dd}{\dt} \smfrac13 \del^3\Ea(z+\theta e)[e, e, e]
    = \del^3\Ea(z+\theta e)[\dot{e}, e, e] + \smfrac{1}{3} \del^4\Ea(z+\theta
    e)[\dot{z}+\theta\dot{e}, e, e, e],
  \end{displaymath}
  which gives
  \begin{equation}
    \label{eq:w:nonlin:int1}
    \int_0^s \del^3\Ea(z+\theta e)[\dot{e}, e, e]
    \dt = \smfrac13 \b[ \del^3\Ea(z+\theta e)[e, e, e] \b]_{t =
      0}^s - \smfrac{1}{3} \int_0^s \del^4\Ea(z+\theta
    e)[\dot{z}+\theta\dot{e}, e, e, e] \dt.
  \end{equation}
  Applying \eqref{eq:bound_delj} and the inverse estimate Lemma
  \ref{th:embedding} with $j = 0, m = 1$ and $p=\infty$, $q=2$ to
  bound $\| \D \dot{e} \|_{L^\infty} \lesssim \| \dot{e} \|_{L^2}$, we
  obtain
  \begin{align*}
     \int_0^s \del^3\Ea(z+\theta e)[\dot{e}, e, e] \dt
     \lesssim\,& M^{(3)} \b( \| \D\bare(0)\|_{L^2}^3 +
     \| \D \bare(s) \|_{L^2}^3 \b) + M^{(4)} \int_0^s \| \D\dot{z}
     \|_{L^\infty} \| \D\bare \|_{L^2}^3 \\
     & +  M^{(4)} \int_0^s
     \|\D\dot{e}\|_{L^\infty} \| \D\bare \|_{L^2}^3 \dt \\
     \lesssim\,& M^{(3)} \b( E(0)^3 + E(s)^3 \b) +
     M^{(4)} \int_0^s \B(\eps E^3 + E^4 \B) \dt. \qedhere
  \end{align*}
\end{proof}

\begin{proof}[Proof of \eqref{eq:w:prf_nonlin_5}]
  To prove \eqref{eq:w:prf_nonlin_5} we may now use the fact that $k
  \geq 5$, that is, $\Ea$ is five times differentiable in a
  neighborhood of $z(t) + \theta e(t)$, $\theta \in [0, 1], t \in [0,
  t_1]$. Thus, starting from \eqref{eq:w:nonlin:int1} we can repeat
  the integration by parts argument to obtain
  \eqref{eq:w:prf_nonlin_5}.
\end{proof}

\section{Conclusion}
In the small deformation or short time regimes we have developed an
essentially complete approximation theory of the Cauchy--Born model
for Bravais lattices for both static and dynamic problems. The main
open questions are 1. the extension to large deformations and/or the
charactisation of maximal time intervals for which the error estimates
hold; and 2. the extension to multi-lattices.

\section*{Acknowledgements}
We thank E. S\"{u}li for discussions to help clarify the properties of
homogeneous Sobolev spaces; C. Makridakis and E. S\"{u}li for
discussions on the error analysis for nonlinear wave equations during
two visits of C. Ortner to the University of Crete funded by the
Archimedes Center for Modeling, Analysis and Computation; and
A. Shapeev for suggesting a variant of the ``localisation trick''
\eqref{eq:intro_localisation_trick} during a research visit of
C. Ortner to the EPFL funded by the Chair of A. Abdulle.

\appendix

\section{Auxiliary Approximation and Interpolation Results}
% %
% \quad
% \subsection{Some convolution tricks}
% \label{sec:convolution_proofs}
%
In this appendix we collect several auxiliary results related to the
smoothing properties of the convolution operator $f \mapsto \zz \ast
f$, and the accuracy of the approximation $\zz\ast f$ to $f$.

\begin{lemma}
  \label{th:convolute_affine}
  Let $\zz$ be the basis function defined in \S\ref{sec:interp}, and
  $f(x) = a + b \cdot x$ where $a \in \R, b \in \R^d$ are constants,
  then $\zz \ast f = f$.
\end{lemma}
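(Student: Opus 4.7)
The plan is to compute $(\zz \ast f)(x)$ directly from the definition of convolution and exploit the two properties of $\zz$ that have already been established in \S\ref{sec:interp}: namely $\int_{\R^d} \zz \dx = 1$ and the symmetry $\zz(-x) = \zz(x)$.

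First, I would write
\begin{displaymath}
  (\zz \ast f)(x) = \int_{\R^d} \zz(y) f(x - y) \dy,
\end{displaymath}
using a change of variables (or equivalently the symmetry of $\zz$). Substituting $f(x - y) = a + b \cdot x - b \cdot y = f(x) - b \cdot y$ and splitting the integral gives
\begin{displaymath}
  (\zz \ast f)(x) = f(x) \int_{\R^d} \zz(y) \dy - b \cdot \int_{\R^d} y\, \zz(y) \dy.
\end{displaymath}
The first integral equals $1$ by the affine-reproduction assumption (applied with $a = 1, b = 0$, which yields $\int \zz \dx = 1$ as noted after \eqref{eq:interp:S1_interp}). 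The second integral is the integral of an odd function against the symmetric, integrable weight $\zz$, hence vanishes component-wise.

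Combining these two observations gives $(\zz \ast f)(x) = f(x)$ for every $x \in \R^d$, which is the claim. No step presents a genuine obstacle; the only subtlety is to notice that both ingredients — the unit integral and the vanishing first moment — are already encoded in the standing hypotheses on $\zz$ from \S\ref{sec:interp}, so no additional assumption is needed.
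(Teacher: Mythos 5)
Your proof is correct and follows essentially the same route as the paper: both arguments reduce the claim to the unit integral $\int \zz \dx = 1$ and the vanishing of the first moment $\int y\,\zz(y)\dy = 0$, the latter via the evenness of $\zz$ (with integrability guaranteed by compact support). The paper merely writes the convolution in the other order and performs the substitution explicitly, so there is no substantive difference.
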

\begin{proof}
  Since $\int \zz(x) \dx = 1$, it follows that $\zz \ast a =
  a$. Further,
  \begin{align*}
    \int_\Om \zz(x-y) y \dy =\,& x - \int_\Om \zz(x-y) (x-y) \dy
    = x - \int_\Om \zz(-y) \cdot (-y) \dy = x,
  \end{align*}
  where we used that fact that $\zz$ is an even function.
\end{proof}

\begin{lemma}
  \label{th:convolution_error}
  Let $f \in \WW^{2,p}$, $p \in [1, \infty]$, then
  \begin{displaymath}
    \| \zz\ast f - f \|_{L^p} \lesssim \| \D^j f \|_{L^p}, \qquad
    \text{for } j = 0, 1, 2.
  \end{displaymath}
\end{lemma}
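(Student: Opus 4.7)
The plan is to treat the three cases $j=0,1,2$ separately, exploiting progressively more structure of the kernel $\zz$ established in \S\ref{sec:interp}: that $\zz\geq 0$ has compact support, $\int_{\R^d}\zz\,\dx=1$, $\zz$ is symmetric about the origin, and (by Lemma~\ref{th:convolute_affine}) $\zz\ast f=f$ for every affine $f$.

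For $j=0$, the statement is immediate from Young's convolution inequality: since $\|\zz\|_{L^1}=1$, we get $\|\zz\ast f\|_{L^p}\leq\|f\|_{L^p}$, hence $\|\zz\ast f-f\|_{L^p}\leq 2\|f\|_{L^p}$. For $j=1$, I would write
\begin{displaymath}
(\zz\ast f)(x)-f(x)=\int_{\R^d}\zz(y)\,[f(x-y)-f(x)]\,\dy=-\int_{\R^d}\zz(y)\int_0^1\D f(x-ty)\cdot y\,\dt\,\dy,
\end{displaymath}
and then apply Minkowski's integral inequality together with the translation invariance of the $L^p$-norm to bound
\begin{displaymath}
\|\zz\ast f-f\|_{L^p}\leq\int_{\R^d}\zz(y)|y|\int_0^1\|\D f(\,\cdot\,-ty)\|_{L^p}\dt\,\dy=\|\D f\|_{L^p}\int_{\R^d}\zz(y)|y|\,\dy,
\end{displaymath}
which is finite because $\zz$ has compact support.

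For $j=2$, the key extra ingredient is that the first moment vanishes: $\int\zz(y)y\,\dy=0$ by the symmetry $\zz(-y)=\zz(y)$ (equivalently, this is the content of Lemma~\ref{th:convolute_affine} applied to the affine part). Hence I can insert a linear correction for free,
\begin{displaymath}
(\zz\ast f)(x)-f(x)=\int_{\R^d}\zz(y)\,[f(x-y)-f(x)+\D f(x)\cdot y]\,\dy,
\end{displaymath}
and use the second-order Taylor remainder $f(x-y)-f(x)+\D f(x)\cdot y=\int_0^1(1-t)\,\D^2 f(x-ty)[y,y]\,\dt$. The same Minkowski/translation-invariance argument then yields $\|\zz\ast f-f\|_{L^p}\leq\|\D^2 f\|_{L^p}\int_{\R^d}\zz(y)|y|^2\,\dy$, again finite by compact support.

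None of the steps is particularly delicate; the only point that needs care is the invocation of Minkowski's integral inequality, which requires $p\in[1,\infty]$ (covered by hypothesis) and finiteness of the weighted moments of $\zz$ (automatic from compact support). The case $p=\infty$ can be handled by the same pointwise bounds without the Minkowski step, so no separate argument is needed.
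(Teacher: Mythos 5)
Your proof is correct. It does, however, take a different route from the paper's: the paper's one-line proof invokes Lemma~\ref{th:convolute_affine} (that $\zz\ast{}$ reproduces affine functions) together with Poincar\'e inequalities, i.e.\ the standard Bramble--Hilbert-type argument in which one works on the compactly supported patches where $\zz(\cdot-y)$ lives, subtracts the reproduced affine function, bounds the deviation on each patch by $\|\D^2 f\|_{L^p(\text{patch})}$ via Poincar\'e, and sums using the finite overlap of the patches (this is exactly the mechanism used explicitly in the proof of Lemma~\ref{th:ex_frc}). You instead use the equivalent structural facts directly as moment conditions --- $\int\zz=1$ and, by symmetry, $\int\zz(y)\,y\,\dy=0$ --- and convert them into the estimate by a Taylor expansion with integral remainder plus Minkowski's integral inequality and translation invariance of the $L^p$-norm. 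Your version is more self-contained (no Poincar\'e constant, no patch decomposition) and yields explicit constants $\int\zz(y)|y|^j\dy$; the paper's version is the one that generalizes to quasi-interpolants that reproduce polynomials locally without being convolutions. The only point worth flagging in your write-up is that for $p<\infty$ a function in $\WW^{2,p}$ is not pointwise twice differentiable, so the integral-remainder Taylor identity should be justified either for a.e.\ $x$ along a.e.\ line, or by first proving the estimate for smooth $f$ and passing to the limit by density; this is routine and not a gap.
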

\begin{proof}
  This result follows immediately from Lemma \ref{th:convolute_affine}
  and Poincar\'{e} inequalities.
\end{proof}

\begin{corollary}
  \label{th:interp_error}
  Let $u \in \WWh^{3,p}$ with $p > d / 3$ and let $\tilu := \zz \ast
  u$, then
  \begin{displaymath}
    \| \D u - \D \Itil \tilu \|_{L^p} \lesssim \| \D^3 u \|_{L^p}.
  \end{displaymath}
\end{corollary}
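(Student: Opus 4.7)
The strategy is to split the error by the triangle inequality
\[
\| \D u - \D \Itil \tilu \|_{L^p} \leq \| \D u - \D \tilu \|_{L^p} + \| \D \tilu - \D \Itil \tilu \|_{L^p},
\]
and bound each term separately. Note that the Sobolev embedding $\WWh^{3,p} \hookrightarrow C^0$, which holds under the hypothesis $p > d/3$, ensures that $\tilu$ is continuous, so that $\Itil \tilu$ makes sense as $\Itil(\tilu|_\L)$.

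For the first term I would use that convolution commutes with differentiation, so $\D \tilu = \zz \ast \D u$; since $u \in \WWh^{3,p}$ gives $\D u \in W^{2,p}$, applying Lemma~\ref{th:convolution_error} with $j=2$ to $\D u$ yields
\[
\| \D u - \D \tilu \|_{L^p} = \| \D u - \zz \ast \D u \|_{L^p} \lesssim \| \D^3 u \|_{L^p}.
\]

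For the second term I would observe that $\Itil(\tilu|_\L)$ is, by construction, the unique element of the quasi-interpolation space $\mathcal{V} := \{\tilw : w \in \Us\}$ that agrees with $\tilu$ at lattice sites. The underlying kernel of this space is $B := \zz \ast \zz$, and the properties of $\zz$ assumed in \S~\ref{sec:interp} (partition of unity, affine invariance, even symmetry, compact support) imply that $B$ satisfies the Strang--Fix conditions up to order two; equivalently, $\mathcal{V}$ reproduces polynomials of total degree $\leq 2$. Consequently the operator $v \mapsto \D v - \D \Itil(v|_\L)$ annihilates such polynomials, and a local Bramble--Hilbert argument on unit lattice cells, summed over $\R^d$, gives
\[
\| \D v - \D \Itil(v|_\L) \|_{L^p} \lesssim \| \D^3 v \|_{L^p}
\]
for smooth $v$. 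Applying this with $v = \tilu$ and using Young's convolution inequality (recalling $\| \zz \|_{L^1} = 1$),
\[
\| \D^3 \tilu \|_{L^p} = \| \zz \ast \D^3 u \|_{L^p} \leq \| \zz \|_{L^1} \| \D^3 u \|_{L^p} = \| \D^3 u \|_{L^p},
\]
yields $\| \D \tilu - \D \Itil \tilu \|_{L^p} \lesssim \| \D^3 u \|_{L^p}$. Combining both bounds gives the claim.

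The main obstacle is the step establishing third-order accuracy of the quasi-interpolant $\Itil$ in the gradient norm: only the reproduction of affine functions is immediate from the assumptions recorded in \S~\ref{sec:interp}, whereas one needs in addition that $\mathcal{V}$ reproduces quadratics. This is a structural consequence of the double-convolution structure of the kernel $B = \zz \ast \zz$, and is proved in detail in \cite{OrShSu:2012}; once that fact is granted, the remainder is a routine combination of convolution smoothing and polynomial-reproducing quasi-interpolation.
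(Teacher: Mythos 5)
Your proposal is correct and follows essentially the same route as the paper: the identical triangle-inequality splitting, Lemma~\ref{th:convolution_error} applied to $f=\D u$ for the convolution term, and the third-order interpolation error bound $\|\D\tilu - \D\Itil\tilu\|_{L^p}\lesssim\|\D^3\tilu\|_{L^p}$, which the paper simply cites from \cite{OrShSu:2012} while you sketch its proof via Strang--Fix/Bramble--Hilbert for the kernel $\zz\ast\zz$. Your added sketch is a fair account of what the cited reference establishes, so there is no substantive difference in approach.
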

\begin{proof}
  Under the assumption that $p > d/3$ (which guarantees the embedding
  $\WWh^{3,p} \subset C$) we know from \cite{OrShSu:2012} that
  \begin{displaymath}
    \| \D \tilu - \D \Itil \tilu \|_{L^p} \lesssim \| \D^3 \tilu
    \|_{L^p} \lesssim \| \D^3 u \|_{L^p}.
  \end{displaymath}

  Applying also Lemma \ref{th:convolution_error} with $f = \D u$, we
  obtain
  \begin{displaymath}
    \| \D u - \D \Itil \tilu \|_{L^p} \leq \| \D u - \D \tilu \|_{L^p}
    + \| \D \tilu - \D \Itil \tilu \|_{L^p}
    \lesssim \| \D^3 u \|_{L^p}.  \qedhere
  \end{displaymath}
\end{proof}

\begin{lemma}
  \label{th:conv_est}
  Let $\nu \subset \R^d$ be measurable, $-\nu = \nu$, and $f \in
  L^p(\Om)^j$, then
  \begin{displaymath}
    \int_\Om \| \zz \ast f \|_{L^\infty(x+\nu)}^p \dx \leq {\rm vol}(\nu') \, \| f \|_{L^p}^p,
  \end{displaymath}
  where $\nu' := \bigcup_{x \in \nu} \supp(\zz(x-\;\cdot\;))$.
\end{lemma}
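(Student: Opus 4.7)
The plan is as follows. Since $\zz \geq 0$ and $\int_\Om \zz\,dx = 1$, the measure $\zz(y-z)\,dz$ is a probability measure on $\R^d$, so Jensen's inequality immediately gives the pointwise bound
$$|(\zz \ast f)(y)|^p \leq (\zz \ast |f|^p)(y).$$
This reduces the task to estimating $(\zz\ast|f|^p)(y)$ uniformly for $y \in x+\nu$, and then integrating in $x$.

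Next, I would pin down the support of the integrand. The factor $\zz(y-z)$ vanishes unless $z \in y - \supp(\zz)$; for $y \in x+\nu$ this forces $z \in x + \nu - \supp(\zz)$. The symmetry hypotheses $-\nu = \nu$ (given) and $-\supp(\zz) = \supp(\zz)$ (from the assumed evenness of $\zz$) identify this set as exactly $x + \nu'$. Combining this with the pointwise bound $\|\zz\|_{L^\infty} \leq 1$ (which follows at once from $\zz \geq 0$ and the partition-of-unity property $\sum_{\xi\in\L}\zz(\;\cdot\;-\xi)\equiv 1$), I obtain
$$\sup_{y\in x+\nu}(\zz\ast|f|^p)(y) \;\leq\; \int_{x+\nu'} |f(z)|^p \dz,$$
uniformly in $x$.

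The final step is to integrate this in $x$ over $\Om$, substitute $z = x+w$ with $w\in\nu'$, and apply Fubini:
$$\int_\Om \|\zz\ast f\|_{L^\infty(x+\nu)}^p \dx \;\leq\; \int_\Om \int_{\nu'} |f(x+w)|^p \,\dd w\,\dx \;=\; \int_{\nu'} \|f\|_{L^p}^p \,\dd w \;=\; {\rm vol}(\nu')\,\|f\|_{L^p}^p,$$
which is the claim. There is no real ``hard part'' here; the only two observations that require any care are the support identification $y - \supp(\zz) \subset x+\nu'$, which uses both symmetry assumptions, and the point-wise bound $\|\zz\|_{L^\infty}\leq 1$ extracted from the partition-of-unity structure.
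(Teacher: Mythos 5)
Your proof is correct and follows essentially the same route as the paper's: Jensen's inequality via $\zz\geq 0$ and $\int\zz\dx=1$, the bound $\zz\leq 1$ combined with the support identification $y-\supp\zz\subset x+\nu'$ for $y\in x+\nu$, and a concluding Fubini/translation step. The only cosmetic difference is that the paper carries the weight $\zz'(x,z):=\max_{y\in x+\nu}\zz(y-z)$ through to the end and bounds $\int_\Om \zz'(x,z)\dx\leq\vol(\nu')$ there, whereas you replace $\zz$ by the indicator of $x+\nu'$ immediately; the two computations are interchangeable.
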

\begin{proof}
  Let $\zz'(x, z) := \max_{y \in x + \nu} \zz(y-z)$.  Since $\zz \geq
  0$ and $\int_\Om \zz \dx = 1$, Jensen's inequality yields
  \begin{align*}
    \| \zz \ast f \|_{L^\infty(\nu(x))}^p =\,& \max_{y \in x + \nu}
    \bg|\int_\Om \zz(y - z) f(z) \dz \bg|^p \leq \max_{y \in x + \nu}
    \int_\Om \zz(y-z) |f(z)|^p \dz\\
    \leq\,& \int_\Om \max_{y \in x + \nu} \zz(y-z) |f(z)|^p \dz
    = \int_\Om \zz'(x,z) |f(z)|^p \dz.
  \end{align*}
  Integrating with respect to $x$, we obtain
  \begin{align*}
    \int_\Om \| \zz \ast f \|_{L^\infty(x+\nu)}^p \dx
    \leq\,& \int_\Om \int_\Om \zz'(x,z) |f(z)|^p \dz \dx \\
    =\,& \int_\Om |f(z)|^p \int_\Om  \zz'(x,z) \dx \dz.
  \end{align*}

  From its definition it is clear that $0 \leq \zz' \leq 1$. Moreover,
  if $\zz'(x, z) \neq 0$, then $\zz(y - z) \neq 0$ for some $y \in
  x+ \nu$, that is,
  \begin{displaymath}
    y - x \in \nu \quad \text{and} \quad z - y \in \supp\zz.
  \end{displaymath}
  Since both $\nu$ and $\supp\zz$ are symmetric about the origin, this
  implies that $x - z \in \bigcup_{y \in \nu} \supp(y - \;\cdot\;) =:
  \nu'$. Thus, we obtain $\int_\Om \zz'(x,z) \dx \leq \vol(\nu')$, which
  concludes the proof.
\end{proof}

\begin{lemma}
  \label{th:ex_frc}
  Let $f^\c \in L^1_\loc \cap \WWh^{-1,2}$ and let $f^\a \in \Us$ be defined by
  \begin{equation}
    \label{eq:errsm:defn_fa}
    f^\a(\xi) := \int_{\Om} \zz(\xi - x) f^\c(x) \dx,
  \end{equation}
  then $f^\a \in \Ush^{-1,2}$. Moreover, if $\D f^\c \in L^2$, then
  \begin{equation}
    \label{eq:fa:err}
    \b| (f^\a, \tilv)_{\L} - (f^\c, \barv)_{\R^d} \b| \leq \| \D
    f^\c \|_{L^2} \| \D\barv \|_{L^2} \qquad \forall v \in \Usz.
  \end{equation}
\end{lemma}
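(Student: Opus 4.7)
The plan is to prove the two assertions in turn. For the first, the key is the elementary identity
\[
(f^\a, v)_{\L} \;=\; (f^\c, \barv)_{\Om} \qquad \forall v \in \Usz,
\]
which follows by inserting the definition \eqref{eq:errsm:defn_fa} into $\sum_\xi f^\a(\xi) v(\xi)$, interchanging the finite sum with the integral (legitimate since $v$ has compact support), using the symmetry $\zz(\xi-x)=\zz(x-\xi)$, and collapsing $\sum_\xi v(\xi)\zz(x-\xi) = \barv(x)$. Combined with the definition of $\|f^\c\|_{\WWh^{-1,2}}$, this yields $f^\a \in \Ush^{-1,2}$ with $\|f^\a\|_{\Ush^{-1,2}} \leq \|f^\c\|_{\WWh^{-1,2}}$.

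For the error estimate \eqref{eq:fa:err}, I will rerun the same interchange but now exploit the fact that $\tilv(\xi) = \int \zz(\xi-y)\barv(y)\dy$ inside $(f^\a,\tilv)_\L$. This produces the representation
\[
(f^\a,\tilv)_\L = \int_\Om \overline{f^\a}(y)\,\barv(y)\dy, \qquad \overline{f^\a}(y) = \int_\Om K(x,y)\, f^\c(x) \dx,
\]
where $\overline{f^\a}$ is the first-order interpolant of $f^\a$ and $K(x,y) := \sum_{\xi \in \L} \zz(x-\xi)\zz(y-\xi)$ is a kernel that is symmetric in $x,y$ and compactly supported in $\{|x-y| \leq \diam(\supp \zz)\}$. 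Because $\{\zz(\;\cdot\;-\xi)\}_\xi$ is a partition of unity that reproduces affine functions and $\zz$ is even, two moment identities hold: $\int K(x,y)\dx = 1$ and $\int x\, K(x,y) \dx = y$.

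Using $\int K(x,y)\dx = 1$ to rewrite $(f^\c,\barv)_\Om$ as the same double integral, and then symmetrising via $K(x,y)=K(y,x)$, I arrive at the bilinear form
\[
(f^\a,\tilv)_\L - (f^\c,\barv)_\Om = -\smfrac{1}{2}\int_\Om\int_\Om K(x,y)\,[f^\c(x)-f^\c(y)]\,[\barv(x)-\barv(y)] \dx \dy.
\]
Writing each difference as a directional-derivative integral, $f^\c(x)-f^\c(y) = \int_0^1 (x-y)\cdot \D f^\c(y+t(x-y)) \dt$ and analogously for $\barv$, applying the Cauchy--Schwarz inequality in $(x,y)$, and then performing the unit-Jacobian change of variables $(x,y) \mapsto (z,h)$ with $z = y+t(x-y)$, $h = x - y$ absorbs the $|x-y|^2$ factor (which is bounded on $\supp K$) and converts the two resulting weighted integrals into $\|\D f^\c\|_{L^2}^2$ and $\|\D \barv\|_{L^2}^2$, respectively.

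The main technical point is verifying the second moment identity $\int x\, K(x,y) \dx = y$ for all (non-lattice) $y$; this reduces to combining the affine-reproduction property of the nodal basis with the evenness of $\zz$, but it is precisely this identity that guarantees that the constant and linear parts of $\barv$ are annihilated in the kernel representation, and thereby produces an estimate involving only a single spatial derivative on each of $f^\c$ and $\barv$ rather than higher derivatives.
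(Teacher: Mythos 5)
Your proposal is correct, but it takes a genuinely different route from the paper's proof. The paper argues cell by cell: after the same identity $(f^\a,\tilv)_\L=(\ol{f^\a},\barv)_\Om$, it inserts the mean--zero property $\int\zz(\xi-x)(f^\a(\xi)-f^\c(x))\dx=0$, subtracts the cell average $c_\xi=(\barv)_{Q_\xi}$ from $\barv$, and applies two Poincar\'e inequalities (one for $\barv-c_\xi$ on $Q_\xi$, one for $f^\c-f^\a(\xi)$ against the weight $\zz(\xi-\;\cdot\;)$, exploiting that $f^\a(\xi)$ is the $\zz$-weighted average of $f^\c$); summing over $\xi$ and counting overlaps gives the explicit constant $2^{3d}/\pi^{2d}\le 1$. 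You instead assemble the symmetric, nonnegative kernel $K(x,y)=\sum_\xi\zz(x-\xi)\zz(y-\xi)$ and symmetrize the bilinear form, which places a first difference on $f^\c$ and on $\barv$ simultaneously; Cauchy--Schwarz with respect to the measure $K\dx\dy$ and the unit-Jacobian shear then close the argument. Two caveats. First, your closing paragraph misplaces the emphasis: the identity $\int xK(x,y)\dx=y$ is never used in the chain you actually run --- the symmetrization together with $\int K(x,y)\dx=1$ and $K(x,y)=K(y,x)$ already annihilates constants in both slots, and first-order Taylor expansion of each difference does the rest; the first moment identity would only be needed in an unsymmetrized variant, where it would moreover push a second derivative onto $\barv$. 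Second, your route delivers \eqref{eq:fa:err} only up to a constant depending on $\zz$ (through $\|\zz\|_{L^\infty}$, $\diam(\supp\zz)$ and the overlap of the supports), not with the constant $1$ asserted in the statement and obtained in the paper. This is immaterial for every application of the lemma in the paper, which only ever invokes it up to generic constants, but strictly speaking your proof establishes the bound with ``$\lesssim$'' rather than ``$\leq$''.
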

\begin{proof}
  Let $v \in \Usz$, then
  \begin{align*}
    (f^\a, v)_{\L}
    =\, \sum_{\xi \in \L} f^\a(\xi) \cdot v(\xi)
    =\, \int_{\R^d} f^\c(x)  \sum_{\xi \in \L} \zz(x-\xi) v(\xi) \dx
    =\, (f^\c, \barv)_{\Om},
  \end{align*}
  which implies that $f^\a \in \Ush^{-1,2}$ with
  $\|f^\a\|_{\Ush^{-1,2}} \leq \| f^\c \|_{\WWh^{-1,2}}$.

  To prove \eqref{eq:fa:err}, we first note that
  \begin{align}
    \notag
    (f^\a, \tilv)_{\L}
    =\,& \sum_{\xi \in \L} f^\a(\xi) \int_\Om \zz(x-\xi)
    \barv(x) \dx \\
    \label{eq:fa:1}
    =\,& \int_\Om \barv(x) \cdot \sum_{\xi \in \L} \zz(x-\xi)
    f^\a(\xi) \dx = (\barf^\a, \barv)_{\Om}.
  \end{align}
  This allows us to write
  \begin{displaymath}
    (f^\a, \tilv )_{\L} - (f^\c, \barv)_{\Om} = \sum_{\xi \in \L}
    \int_{\R^d} \zz(\xi-x) (f^\a(\xi) - f^\c(x)) {v}(x) \dx.
  \end{displaymath}
  From the definition of $f^\a$  \eqref{eq:errsm:defn_fa} it follows
  that
  \begin{displaymath}
    \int \zz(\xi-x) (f^\a(\xi) - f^\c(x)) = 0,
  \end{displaymath}
  and hence we obtain, for some arbitrary constants $c_\xi \in \R$,
  \begin{align*}
    \b| (f^\a, \tilv )_{\L} - (f^\c, \barv)_{\Om} \b|
    =\,& \bg| \sum_{\xi \in \L} \int_{\R^d} \zz(\xi-x) \b(f^\a(\xi) - f^\c(x)\b)
    \b({u}(x) - c_\xi \b) \ds \bg| \\
    \leq\,& \sum_{\xi \in \L} \bg( \int_{\R^d} \zz(\xi-x)^2 \b| f^\a(\xi) -
    f^\c(x) \b|^2 \dx \bg)^{1/2} \, \b\| \barv  - c_\xi \b\|_{\LL^2(Q_\xi)}.
  \end{align*}

  Choosing $c_\xi = (\barv)_{Q_\xi}$ and applying Poincar\'{e}'s
  Inequality, we can estimate
  \begin{displaymath}
    \b\| \barv  - c_\xi \b\|_{\LL^2(Q_\xi)} \leq
    \b(\smfrac{2}{\pi}\b)^d \b\| \D\barv \b\|_{\LL^2(Q_\xi)}.
  \end{displaymath}
  Moreover, estimating $\zz \leq 1$, and using the fact that
  $f^\a(\xi)$ is the orthogonal projection of $f^\c$ with respect to the
  kernel $\zz(\xi - \;\cdot\;)$, we obtain
  \begin{align*}
    \bg( \int \zz(\xi-x)^2 \b| f^\a(\xi) -
    f^\c(x) \b|^2 \dx \bg)^{1/2}
    \leq\,& \bg( \int \zz(\xi-x) \b| f^\a(\xi) -
    f^\c(x) \b|^2 \dx \bg)^{1/2}  \\
    \leq\,& \bg( \int \zz(\xi-x) \b| (f^\c)_{Q_\xi} -
    f^\c(x) \b|^2 \dx \bg)^{1/2}  \\
    \leq\,& \b\| (f^\c)_{Q_\xi} - f^\c \b\|_{\LL^2(Q_\xi)}
    \leq \b(\smfrac{2}{\pi}\b)^d \b\| \D
    f^\c \b\|_{\LL^2(Q_\xi)}.
  \end{align*}
  Combining the foregoing estimates and estimating the overlaps we
  arrive at
  \begin{displaymath}
    \b| (f^\a, \tilv )_{\L} - (f^\c, \barv)_{\Om} \b| \leq C
    \| \D f^\c \|_{\LL^2} \| \D \barv \|_{\LL^2}.
  \end{displaymath}
  with $C = 2^{3d} / \pi^{2d} \leq 1$ for $d \in \{1,2,3\}$. This
  establishes \eqref{eq:fa:err}.
\end{proof}

\section{Examples of admissible potentials}
\label{sec:examples}
We discuss the most common interatomic potentials and show that they
can be accommodated within our framework. We remark from the outset
that our smoothness requirement (at least four times continuously
differentiable for the error analysis) is reasonable for physical
interaction potentials, but is not satisfied by typical potentials
constructed for molecular dynamics simulations, which employ cut-off
functions that are often only once differentiable.

\subsection{Lennard-Jones type potentials}
\label{sec:ex_lj}
For a pure pair interaction model, we define
\begin{equation}
  \label{eq:ex_lj_V}
  V(\bfg) := \frac12 \sum_{\rho \in \Rg} \b[\varphi\b(|g_\rho|\b) -
  \varphi\b( |\mA\rho|\b)\b].
\end{equation}
With this definition, $V$ clearly satisfies the symmetry
\eqref{eq:intro:ptsymm}.

The prototypical example is of course the Lennard--Jones potential
\cite{LennardJones:1924a},
\begin{displaymath}
  \varphi(r) = r^{-12} - 2 r^{-6}.
\end{displaymath}
In this case, one readily sees that $\varphi^{(j)}(r) \lesssim
r^{-6-j}$, for $r \geq 1$ and $j \in \N$. More generally, suppose that
$V$ is of the form \eqref{eq:ex_lj_V} with
\begin{equation}
  \label{eq:ex_lj_decay}
  |\varphi^{(j)}(r)| \lesssim r^{-\alpha - j}, \quad \text{ for } r
  \geq 1, \quad j \in \N,
\end{equation}
then one may readily deduce that
\begin{displaymath}
  m(\bfrho) \lesssim \cases{ |\rho|^{-\alpha}, & \bfrho = (\rho,
    \dots, \rho) \in \Rg^j, \\
    0, & \text{otherwise},}
\end{displaymath}
and consequently,
\begin{displaymath}
  M^{(j)} \lesssim \sum_{\rho\in\Rg} |\rho|^{-\alpha} \qquad
  \text{and} \qquad
  \Ms^{(j,2)} + \Md^{(j,2)} \lesssim \sum_{\rho\in\Rg} |\rho|^{-\alpha+5/2}.
\end{displaymath}
Thus, $M^{(j)}$ is finite (and hence $\Ea$ well-defined and $k$ times
differentiable) if and only if $\alpha > d$; and $\Md^{(j,2)},
\Ms^{(j,2)}$ are finite (and hence our error analysis applies) if and
only if $\alpha > d+5/2$.

In particular, it follows that the Lennard-Jones potential is included
in our analysis. Another commonly employed potential is the Morse
potential \cite{Morse:1929a}, which decays exponentially and is hence
trivially included our analysis. The Coulomb potential, $\varphi(r) =
r^{-1}$, is excluded.

\subsection{Embedded atom method}
\label{sec:ex_eam}
In the embedded atom method \cite{Daw:1984a} one postulates site
energies of the form
\begin{equation}
  \label{eq:ex_eam}
  V(\bfg) = \sum_{\rho\in\Rg} \varphi(|g_\rho|) + G\b( {\textstyle
    \sum_{\rho\in\Rg} \psi(|g_\rho|)} \b),
\end{equation}
where $\varphi$ is a Lennard-Jones or Morse type pair potential,
$\psi(|g_\rho|)$ is a model of the electron density at $0$ generated
by a nucleus at distance $|g_\rho|$, and $G$ is the energy to embed a
nucleus into a see of electrons. Again, it is clear from the
functional form of $V$, that it satisfies the symmetry
\eqref{eq:intro:ptsymm}.

The computation of the partial derivatives is now more
involved. Suppose, for simplicity, that $\varphi \equiv 0$, define
$\bar\psi := \sum_{\rho\in\Rg} \psi(|g_\rho|)$, and $\Psi(g) :=
\psi(|g|)$, then
\begin{align*}
  V_\rho(\bfg) =\,& G'(\bar\psi) \D\Psi(g_\rho), \\
  V_{\rho\vsig}(\bfg) =\,& G''(\bar\psi) \D\Psi(g_\rho) \otimes
  \D\Psi(g_\vsig) + G'(\bar\psi) \D^2 \Psi(g_\rho)
  \delta_{\rho,\vsig}, \\
  V_{\rho\vsig\tau}(\bfg) =\,&  G'''(\bar\psi) \D\Psi(g_\rho) \otimes
  \D\Psi(g_\vsig) \otimes \D\Psi(g_\tau)  + G'(\bar\psi) \D^3
  \Psi(g_\rho) \delta_{\rho,\vsig} \delta_{\rho,\tau} \\
  & \hspace{-2cm} + G''(\bar\psi)\B(
  \D^2\Psi(g_\rho)\otimes \D\Psi(g_\vsig) \delta_{\rho,\tau} +
  \D\Psi(g_\rho) \otimes \D^2\Psi(g_\vsig) \delta_{\vsig,\tau} +
  \D^2\Psi(g_\rho)\otimes \D\Psi(g_\tau) \delta_{\rho,\vsig} \B),
\end{align*}
and so forth. $G$ is typically chosen smooth and $\varphi, \psi$ decay
exponentially. In that case, one immediately sees that all constants
$M^{(j)}, \Ms^{(j,p)}, \Md^{(j,p)}$ are bounded. More generally, let
$\varphi \equiv 0$ and suppose that
\begin{equation}
  \label{eq:decay_psi}
  \psi^{(j)}(r) \lesssim r^{-\beta-j}.
\end{equation}
In this case, also $|\D^j \Psi(g)| \lesssim |g|^{-\beta-j}$, and we
conclude that
\begin{align*}
  m(\rho) \lesssim\,& |\rho|^{-\beta}, \qquad \rho\in\Rg, \\
  m(\rho,\vsig) \lesssim\,& |\rho|^{-\beta} |\vsig|^{-\beta} +
  |\rho|^{-\beta} \delta_{\rho\vsig}, \qquad \rho,\vsig\in\Rg, \\
  m(\rho,\vsig,\tau) \lesssim\,&
  |\rho|^{-\beta}|\vsig|^{-\beta}|\tau|^{-\beta} + |\rho|^{-\beta}
  \delta_{\rho,\vsig}\delta_{\rho,\tau} \\
  & + \B( |\rho|^{-\beta} |\vsig|^{-\beta} \delta_{\rho,\vsig} +
  |\rho|^{-\beta}|\tau|^{-\beta} \delta_{\rho,\tau} +
  |\vsig|^{-\beta} |\tau|^{-\beta} \delta_{\vsig,\tau} \B), \quad
  \rho,\vsig,\tau\in\Rg,
\end{align*}
and so forth. Due to the product structure, one can readily see that
$M^{(j)}$ is finite provided that $\beta > d$.

However, to ensure that $\Md^{(j,2)}, \Ms^{(j,2)}$ are finite, we now
require more stringent requirements. For example, considering only the
first group in $m(\rho,\vsig,\tau)$ and indicating the missing terms
by ``$\dots$'', and using $|\rho\times\vsig| \leq |\rho||\vsig|$, we
can estimate
\begin{align*}
  \Ms^{(3,2)} \lesssim\,& \sum_{\rho,\vsig,\tau\in\Rg}
  (|\rho|+|\vsig|+|\tau|)^3 |\rho|^{-\beta} |\vsig|^{-\beta}
  |\tau|^{-\beta} + \dots \\
  \lesssim\,& \sum_{\rho,\vsig,\tau \in\Rg}
  \b[|\rho|^{3-\beta}|\vsig|^{-\beta}|\tau|^{-\beta} +
  |\rho|^{2-\beta}|\vsig|^{1-\beta}|\tau|^{-\beta} + \dots \b] +
  \dots,
\end{align*}
which is finite provided that $\beta > d + 3$. The remaining terms
can be treated analogously. For the dynamic case, the extra factor
$|\rho_1|^{-1}$ does not help except in the case of pair interactions,
and we require $\beta > d + 4$ to ensure that the constants
$\Md^{(j,2)}$ are finite.

In summary, if $V$ is of the form \eqref{eq:ex_eam} with the pair
interaction $\varphi$ satisfying \eqref{eq:ex_lj_decay} and the
electron density function $\psi$ satisfying \eqref{eq:decay_psi}, then
we require $\alpha, \beta > d$ to ensure that the constants $M^{(j)}$,
$1 \leq j \leq k$ are finite; we require $\alpha > d+5/2, \beta > d+3$
to ensure that $\Ms^{(j,2)}$, $j = 2, 3$, are finite; and we require
$\alpha > d + 5/2, \beta > d + 4$ to ensure that $\Md^{(j,2)}$, $2
\leq j \leq 4$ are finite.

\subsection{Bond-angle potentials}
\label{sec:ex_short}
Lennard-Jones type pair interactions and embedded atom potentials are
the prototypical long-ranged potentials with infinite interaction
range. Most other potentials used in molecular simulations act only on
a finite interaction neighborhood. For example, bond-angle potentials
(3-body or 4-body) act only on angles between nearest neighbors. We
only need to check whether they can be written in a way that preserves
the inversion symmetry~\eqref{eq:intro:ptsymm}. 3-body bond-angle
energies are typically written in the form
\begin{displaymath}
  \sum_{\xi,\eta,\mu\in\L}
  \varphi(|r_{\eta\xi}|)\varphi(|r_{\mu,\xi}|) \psi(\theta_{\eta\xi\mu}),
\end{displaymath}
where $r_{\eta\xi} = \eta+u(\eta)-\xi-u(\xi)$, $\theta_{\eta\xi\mu}$
is the angle between the bond directions $r_{\eta\xi}, r_{\mu\xi}$,
$\varphi$ is a cut-off function to ensure that the potential acts only
on nearest-neighbors, and $\psi$ is an angle potential that drives
towards preferred bond-angles. This term is symmetric about the
center-atom, which suggests to write
\begin{displaymath}
  V(\bfg) = \sum_{\substack{\rho,\vsig\in\Rg \\ \rho \neq \vsig}}
  \varphi(|r_\rho|) \varphi(|r_\vsig|) \psi(\theta_{\rho\vsig}),
\end{displaymath}
where $r_\rho := \rho+g_\rho$, and $\theta_{\rho\vsig}$ is the angle
between $r_\rho, r_\vsig$. This sum is fully permutation invariant,
and hence the inversion symmetry \eqref{eq:intro:ptsymm} holds.

4-body (or, dihedral angle; or, torsion) potentials can be treated
similarly. There are now two center atoms in bonds of this type, and
hence one ``splits'' the bond between the two corresponding site
energies (similarly as in the pair potential case). By summing over
all quadruples involved with the given site, the resulting site
potential will again be permutation invariant.

\subsection{Generic multi-body potentials}
A more recent development are potentials without physical
interpretation, but simply postulating a general functional form for
$V$, and fitting a large number of parameters to energy and forces
obtained from electronic structure calculations; see, e.g.,
\cite{BaPaKoCs:GAP}. Such general potentials are normally constructed
to satisfy the permutation invariance, and hence the inversion symmetry
\eqref{eq:intro:ptsymm}, and are therefore still included in our
analysis.

\section{Lattice Stability versus Ellipticity}
\label{sec:app_stab}
We show that the lattice stability assumption \eqref{eq:defn_Lam} is
not only sufficient but also {\em necessary} to obtain Theorems
\ref{th:errsm:mainthm} and \ref{th:w:mainthm}. This can already be
seen for 1D second-neighbour harmonic pair interactions:
\begin{displaymath}
  \Es_\xi(u) = \smfrac{a_1}{4} \b( |u_\xi'|^2 + |u_{\xi+1}'|^2 \b)
  + \smfrac{a_2}{4} \b( |u_{\xi-1}'+u_\xi'|^2 + |u_{\xi+1}'+u_{\xi+2}'|^2 \b),
\end{displaymath}
where $u_\eta' = u_\eta - u_{\eta-1}$. In this case, the atomistic and
Cauchy--Born energies are more conveniently written in the form
\begin{align*}
  \Ea(u) = \sum_{\xi\in\L} \B(  \smfrac{a_1}{2} |u_\xi'|^2 +
  \smfrac{a_2}{2} |u_\xi'+u_{\xi+1}'|^2 \B) \quad \text{and} \quad
  \Ec(u) = \smfrac{(a_1+4a_2)}{2}\int_\R |u'|^2\,\dd x.
\end{align*}
We consider two choices for the coefficients $a_1, a_2$:
\begin{displaymath}
  \begin{array}{rcl}
    a_1^{(1)} &=& 2, \\
    a_2^{(1)} &=& -\smfrac{1}{4},
  \end{array}
  \qquad \text{and} \qquad
  \begin{array}{rcl}
    a_1^{(2)} &=& -1, \\
    a_2^{(2)} &=& \smfrac{1}{2},
  \end{array}
\end{displaymath}
then in both of these cases we have
\begin{displaymath}
  a_1^{(j)} + 4 a_2^{(j)} = 1.
\end{displaymath}
Thus, the continuum energy is positive definite and hence the
continuum wave equation is well-posed.

In the atomistic case, we can use the parallelogram formula to rewrite
\begin{equation}
  \label{eq:app_stab_Ea}
  \Ea(u) = \sum_{\xi\in\L} \B( \smfrac{a_1+4a_2}{2} |u_\xi'|^2 -
  \smfrac{a_2}{2} |u_\xi''|^2 \B),
\end{equation}
where $u_\xi'' = u_{\xi+1} - 2 u_{\xi} + u_{\xi-1}$.  Hence, in the
case $a_i = a_i^{(1)}$ we have that $\Ea(u) \geq \Ec(u)$, so that
\eqref{eq:defn_Lam} is satisfied and the dynamic atomistic and
continuum solutions will remain close for a macroscopic time interval
(cf. Theorem~\ref{th:w:mainthm}).

By contrast, in the case $a_i = a_i^{(2)}$, where $a_2 > 0$ we can see
from \eqref{eq:app_stab_Ea} that oscillations are energetically
advantageous. Indeed we note that, formally, defining
$\hat{\varphi}'(\xi) = (-1)^\xi$ gives infinite negative energy,
\begin{displaymath}
  \Ea(\hat{\varphi}) = \sum_{\xi\in\L} \B( -\smfrac{1}{2} |1|^2 +
  \smfrac{1}{4}  |0|^2 \B) = -\infty.
\end{displaymath}
Formally (since $\hat\varphi \notin \Ush^{1,2}$) one easily checks
that $H \hat\varphi=-\hat\varphi$, where $H := \ddel\Ea(0)$. A
straightforward approximation argument shows that $-1$ belongs to the
spectrum of $H$.

For the static case, this means that even if atomistic solutions
exist, they are not local minimizers.

For the dynamic case, it means that there exist exponentially growing
solutions. Using the characterization of the spectrum in terms of
approximate eigenfunctions, there exists for each $\delta > 0$ a
function $\psi_\delta \in \ell^2$ with $\| \psi_\delta \|_{\ell^2} =
1$, such that $\| H \psi_\delta + \psi_\delta \| \leq \delta$ (see
Section VIII.3 in \cite{ReedSimon}). Suppose now that we solve the
Cauchy--Born equation with $u(0) = \dot{u}(0) = 0$ and the atomistic
equation with $u(0) = 0$ and $\dot{u}(0) = \eps^2 \psi_\delta$. The
function $v(t) := \sinh(t) \eps^2 \psi_\delta$ then solves the
atomistic evolution equation to order $O(\delta)$. By estimating the
difference $u(t) - v(t)$ it is straightforward to prove that
\begin{displaymath}
  \| \dot{u}(t) \|_{\ell^2} \geq \eps^2 \smfrac12 e^t \qquad \text{for } t \leq 3 |\log\eps|,
\end{displaymath}
and in particular, $\|\dot{u} \|_{\ell^2}$ becomes of order one for $t
\sim |\log\eps|$.

\bibliographystyle{plain}
\bibliography{qc}
\end{document}